\documentclass[preprint,12pt]{elsarticle1}
\biboptions{numbers,sort&compress}

\usepackage{graphicx}
\usepackage{epstopdf}
\usepackage{amssymb}
\usepackage{amsthm}
\usepackage{amsmath}
\usepackage{epic}
\usepackage{setspace}
\usepackage{bm}
\usepackage{xcolor}
\usepackage[hidelinks]{hyperref}
\usepackage{enumitem}
\newtheorem{theorem}{Theorem}[section]

\newtheorem{lemma}[theorem]{Lemma}

\newtheorem{definition}[theorem]{Definition}

\journal{~}
\begin{document}

\begin{spacing}{1.15}

\begin{frontmatter}
\title{\textbf{\\
Clique spectral extremal problem on disjoint color-critical graphs}}

\author[label1]{Changjiang Bu\corref{cor}}\ead{buchangjiang@hrbeu.edu.cn}
\author[label1]{Peiyan Wei}\ead{weipeiyan0705@126.com}
\author[label1]{Haotian Zeng}\ead{zenghaotian810@163.com}

\cortext[cor]{Corresponding author}

\address{
\address[label1]{School of  Mathematical Sciences, Harbin Engineering University, Harbin 150001, PR China}
}

\begin{abstract}
For a given graph $F$,
a graph $G$ is called $F$-free if it does not contain $F$ as a subgraph.
A graph is color-critical if  deleting one of its edges decreases its chromatic number.
Let $F_1, F_2, \cdots, F_t$ be $t$ disjoint color-critical graphs with  chromatic number $r+1$. For $2 \leq s \leq r$ and sufficiently large $n$, we determine the unique extremal graph with the maximum $s$-clique spectral radius among all $n$-vertex $\bigcup_{i=1}^t F_i$-free graphs.

\end{abstract}

\begin{keyword}
Clique spectral radius, Color-critical graph, Clique tensor
\\
\emph{AMS classification (2020):} 05C50, 05C35
\end{keyword}
\end{frontmatter}

\section{Introduction}
An \textit{$s$-clique} of a graph $G$
is a subset of $s$ vertices that induces a complete subgraph of $G$.
The set of all $s$-cliques in $G$ is denoted by $C_s(G)$. 
A graph $G$ is \textit{color-critical} if there exists an edge $e$ such that $\chi(G-e)<\chi (G)$,
where $\chi(G)$ is the chromatic number of $G$.
Let $K(n_1,n_2,\dots,n_r)$ be the \textit{complete $r$-partite graph} with parts of sizes $n_1,n_2,\dots,n_r$.
An \textit{$r$-partite Tur\'{a}n graph} $T_r(n)$ is a complete $r$-partite graph $K(n_1,n_2,\dots,n_r)$ on $n$ vertices with $|n_i - n_j| \leq 1$ for all $i,j \in \{1,2,\dots,r\}$.
Let $K^+(n_1,n_2,\dots,n_r)\ (n_1\geq 2)$ be the graph obtained from $K(n_1,n_2,\dots,n_r)$ by adding an edge to the first part.
Let $kG$ denote the \textit{disjoint union} of $k$ copies of $G$.
For two disjoint graphs $G_1$ and $G_2$,
the \textit{join} $G_1+G_2$ is the graph obtained from $G_1\cup G_2$ by adding all edges between the vertices of $G_1$ and $G_2$.

For a given graph $F$,
we say that $G$ is \textit{$F$-free} if it does not contain $F$ as a subgraph.
The maximum number of edges among all $F$-free graphs on $n$ vertices is called the \textit{Tur\'{a}n number of $F$},
denoted by ${\rm ex}(n, F)$,
and the corresponding graph is called an \textit{extremal graph for $F$}.
There are lots of researches on Tur\'{a}n-type problem,
see \cite{Turan} for complete graph,
\cite{Moon} for disjoint union of $t$ copies of complete graph and \cite{Fang} for disjoint union of $t$ copies of odd cycle.
Clearly,
complete graph and odd cycle are color-critical graphs.
Simonovits \cite{Simonovits} determined the extremal graph for disjoint union of color-critical graphs.
\begin{theorem}{\rm (\cite{Simonovits})}\label{Simonovits}
Let $F_1,F_2,\dots,F_t$ be $t$ disjoint color-critical graphs with chromatic number $r+1$.
Then $K_{t-1}+T_r(n-t+1)$ is the unique graph with respect to ${\rm ex}(n,\bigcup_{i=1}^tF_i)$ for sufficiently large $n$.
\end{theorem}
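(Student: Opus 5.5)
We would prove Theorem~\ref{Simonovits} by the standard stability-plus-cleaning method. Fix a $(r+1)$-chromatic color-critical graph $F$; recall that by Simonovits' critical-edge theorem ${\rm ex}(n,F)=e(T_r(n))$ for large $n$, and that the Erd\H{o}s--Stone--Simonovits stability theorem holds for any graph of chromatic number $r+1$. Let $G$ be an extremal $n$-vertex $\bigcup_{i=1}^tF_i$-free graph. For the lower bound, $K_{t-1}+T_r(n-t+1)$ is $\bigcup F_i$-free: any copy of some $F_i$ must use a vertex of the $K_{t-1}$, because $T_r(n-t+1)$ is $r$-colorable and $\chi(F_i)=r+1$, so at most $t-1$ of the disjoint $F_i$ fit. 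Hence
\[
e(G)\ge e(T_r(n-t+1))+(t-1)(n-t+1)+\binom{t-1}{2}=e(T_r(n))+(t-1)\frac{n}{r}+O(1).
\]

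Step 1 (stability). $G$ contains no $t$ disjoint copies of $F=F_1\cup\dots\cup F_t$'s components; deleting a bounded number of vertices (those of a maximal family of disjoint copies of the $F_i$'s, at most $t-1$ of them, each of bounded size) gives an $F_j$-free graph for some $j$. So $e(G)\le {\rm ex}(n,F_j)+O(n)$, and stability yields a partition $V=V_1\cup\dots\cup V_r$ with $\sum_i e(G[V_i])=o(n^2)$ and $|V_i|=n/r+o(n)$. Choose the partition maximizing the number of crossing edges, so every vertex has at least as many neighbors in each other part as in its own.

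Step 2 (cleaning). Let $W$ be the set of vertices having at least $\varepsilon n$ neighbors in their own part, and $L$ the set of vertices of degree at most $(1-1/r-\varepsilon)n$. Standard edge counting (deleting a low-degree vertex would contradict extremality against the bound in Step 1) shows $L=\emptyset$ for large $n$ after a short argument, and $|W|=O(1)$. We then show $|W|\le t-1$: any vertex in $W$ has linear degree into every part, so with $t$ vertices of $W$ and the almost complete $r$-partite structure on the rest, we can greedily embed $t$ disjoint copies, each $F_i$ using one vertex of $W$ as the endpoint of its critical edge (color-criticality means $F_i$ minus a suitable vertex set is $r$-colorable after deleting one edge; more precisely $F_i$ embeds into $K_1+$ a blown-up $K_r$ with one extra edge inside a part, which the $W$-vertex supplies). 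Conversely if $|W|<t-1$, then $G-W$ has internal parts with bounded maximum degree inside parts, and a matching of $t-|W|$ edges inside parts would again produce $t$ disjoint copies; so $G-W$ has only few internal edges, and comparing edge counts with the lower bound forces $|W|=t-1$ and each $W$-vertex adjacent to almost everything.

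Step 3 (exact structure). With $|W|=t-1$, $G-W$ has no edge inside a part (such an edge plus the $t-1$ vertices of $W$ gives $t$ disjoint copies), so $G-W$ is $r$-partite and $e(G-W)\le e(T_r(n-t+1))$, with equality only for the Turán graph; edges touching $W$ number at most $(t-1)(n-t+1)+\binom{t-1}{2}$. Equality in the lower bound then forces $G=K_{t-1}+T_r(n-t+1)$. The main obstacle is Step 2, specifically the embedding argument showing that $t$ vertices of $W$ (or a mix of $W$-vertices and internal edges) yield $t$ disjoint color-critical copies; this needs care in choosing common neighborhoods of linear size in every part, which we would handle by the greedy/K\H{o}v\'ari--S\'os--Tur\'an style selection of large complete $r$-partite subgraphs in the common neighborhood.
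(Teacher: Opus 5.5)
The paper gives no proof of this statement: Theorem~\ref{Simonovits} is quoted from Simonovits, so there is no in-paper proof to compare against. Your outline is the classical stability--cleaning argument, and it follows the same scheme the paper uses for its spectral analogue:
\begin{itemize}
\item the lower bound, as in Lemma~\ref{311};
\item the max-cut partition of Lemma~\ref{lei};
\item extension of internal edges and of $W$-vertices to copies of $K^+(n_1,\dots,n_r)$, as in Lemmas~\ref{344} and~\ref{3112};
\item the matching bound of Lemma~\ref{312}, and then $|W|=t-1$ as in Lemma~\ref{313}.
\end{itemize}
For edges your endgame is simpler: the exact count in Step~3 does at once what the paper needs Lemmas~\ref{314}, \ref{315} and the balancing Lemma~\ref{Xu} for. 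The outline is sound except at one step.

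\textbf{The argument for $L=\emptyset$ does not work as stated.} If $d(v)\le(1-\frac1r-\varepsilon)n$, then $e(G-v)\ge e(T_r(n-1))+\bigl(\frac{t-1}{r}+\varepsilon\bigr)n-O(1)$. The only upper bound you have for $G-v$ is $e(T_r(n-1))+Cn$, with $C$ of order $(t-1)\max_i|V(F_i)|$, so there is no contradiction. Iterating the deletion only gives $|L|=O(1/\varepsilon)$, and counting against this bound does not remove the remaining low-degree vertices.

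What you need is the replacement argument of Lemma~\ref{3991}:
\begin{itemize}
\item Take $v\in L\cap V_k$, delete all edges at $v$, and join $v$ to every vertex of $\bigcup_{j\ne k}V_j\setminus(W\cup L)$.
\item This raises the edge count by about $\varepsilon n$, since $|W\cup L|$ and the deviations of the part sizes from $n/r$ are small compared with $\varepsilon n$.
\item Any copy of some $F_i$ through $v$ in the new graph can be rerouted through a vertex of $V_k$ that avoids the $t$ copies and is adjacent to all of $v$'s at most $h$ neighbours in that copy. Such a vertex exists because each of these good vertices has $(\frac1r-O(\varepsilon))n$ neighbours in $V_k$.
\end{itemize}
In the edge setting this works for every $v\in L$, so you really do get $L=\emptyset$; the spectral Lemma~\ref{3991} only yields $L\subseteq W$. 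You cannot skip this step. Both the matching bound in Step~2 and the claim in Step~3 that an edge inside a part of $G-W$ extends to a copy avoiding $W$ and the other copies require both endpoints to have degree above $(1-\frac1r-\varepsilon)n$.

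\textbf{A smaller point.} You assert without proof that $G-W$ has bounded maximum degree inside parts (this would need an argument like Lemma~\ref{355}), but you do not need it. Let $H$ be the graph of internal edges of $G-W$ and $g=t-1-|W|$.
\begin{itemize}
\item Vertices outside $W$ have fewer than $\varepsilon n$ neighbours in their own part, and $\mu(H)\le g$ gives a vertex cover of size $2g$. Hence $e(H)\le 2g\varepsilon n$.
\item On the other hand, $e(G)\le e(T_r(n-|W|))+|W|n+e(H)$.
\item Your lower bound, rewritten, reads $e(G)\ge e(T_r(n-|W|))+|W|n+\frac{gn}{r}-O(1)$.
\end{itemize}
For $g\ge 1$ and $\varepsilon<\frac1{2r}$ these are incompatible, which gives $|W|=t-1$ directly.
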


Nikiforov \cite{Nikiforov2010} studied the \textit{spectral Tur\'{a}n-type problem},
i.e.,
for a given graph $F$,
determining the maximum spectral radius among all $n$-vertex graphs containing no $F$ as a subgraph.
Let ${\rm spex}(n,F)$ denote the maximum spectral radius among all $F$-free graphs on $n$ vertices.
Researches on spectral Tur\'{a}n-type problem has attracted increasing interest,
see \cite{Nikiforov2007} for complete graph,
\cite{Nikiforov2008} for odd cycle,
\cite{Zhai2023} for book graph,
\cite{Ni} for disjoint union of $t$ copies of complete graph and \cite{Fang} for disjoint union of t copies of odd cycle. 
In 2024,
Lei and Li \cite{Lei} gave the spectral version of Theorem \ref{Simonovits}.
\begin{theorem}{\rm (\cite{Lei})}\label{Leixingyu}
Let $F_1,F_2,\dots,F_t$ be $t$ disjoint color-critical graphs with chromatic number $r+1$.
Then $K_{t-1}+T_r(n-t+1)$ is the unique graph with respect to ${\rm spex}(n,\bigcup_{i=1}^tF_i)$ for sufficiently large $n$.
\end{theorem}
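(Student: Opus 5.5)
The statement is Theorem~\ref{Leixingyu}, the spectral analogue of Theorem~\ref{Simonovits}. I would follow the standard stability route: first show that a spectral extremal graph is close to $T_r(n)$, and then refine its structure until it must be $K_{t-1}+T_r(n-t+1)$.

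Let $G$ be an $n$-vertex $\bigcup_{i=1}^tF_i$-free graph with maximum spectral radius $\lambda=\lambda(G)$. Let $\mathbf{x}$ be its Perron vector, normalized so that $\max_v x_v=1$. Set $H=K_{t-1}+T_r(n-t+1)$.

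\textbf{Lower bound.} The graph $H$ is itself $\bigcup_{i=1}^t F_i$-free. Indeed, each $F_i$ has chromatic number $r+1$, so every copy of $F_i$ in $H$ must use a vertex of $K_{t-1}$; $t$ disjoint copies would need $t$ such vertices, but only $t-1$ exist. Hence
\[
\lambda\ge \lambda(H)\ge \lambda(T_r(n))\ge (1-1/r)n-O(1).
\]

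\textbf{Edge count and stability.} Write $F=\bigcup_{i=1}^t F_i$. Then $F\subseteq K_{r+1}(m)$ for some $m$, the complete $(r+1)$-partite graph with parts of size $m$. Since $\lambda^2\le 2e(G)$, we get $e(G)\ge(1-1/r)n^2/2-O(n)$. By the Erd\H{o}s--Simonovits stability theorem (or Nikiforov's spectral stability), $G$ admits a partition $V_1,\dots,V_r$ with $\sum_i e(V_i)=o(n^2)$ and each $|V_i|=n/r+o(n)$.

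\textbf{Refinement to $t-1$ exceptional vertices.} Call a vertex \emph{bad} if it has $\epsilon n$ neighbours in its own part. I would carry out the following steps.
\begin{enumerate}[label=(\roman*)]
\item Use the eigenvalue equation $\lambda x_v=\sum_{u\sim v}x_u$, together with a deletion argument, to show that every vertex has $x_v$ bounded below and degree at least $(1-1/r-\epsilon)n$. If some vertex had small degree, replacing its neighbourhood by a large part would increase $\lambda$ while keeping $G$ $F$-free.
\item Show that there are at most $t-1$ bad vertices. Given $t$ bad vertices, each one, together with the dense $r$-partite structure among the good vertices, yields a copy of a color-critical $F_i$. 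Here I use that $F_i$ embeds into $K_r(m)$ plus one extra edge inside a part, and that such an extra edge can be placed through a bad vertex adjacent to many vertices of every part. Greedy disjoint embedding then gives $F$, a contradiction.
\item Let $W$ be the set of bad vertices. Show that $G-W$ is $r$-partite with respect to the adjusted partition. An edge inside a part among good vertices, combined with the $|W|$ bad vertices, yields $|W|+1$ disjoint critical copies. A counting argument then forces $|W|=t-1$; if $|W|<t-1$, comparing with $H$ gives $\lambda<\lambda(H)$.
\end{enumerate}

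\textbf{Conclusion.} Once $G\subseteq K_{t-1}+K(n_1,\dots,n_r)$, maximality of $\lambda$ forces $G$ to equal this join. It remains to show the parts must be balanced. Among complete joins $K_{t-1}+K(n_1,\dots,n_r)$, the spectral radius is strictly maximized by balanced parts. This follows by moving one vertex from a larger part to a smaller one and checking via the Perron vector that $\lambda$ strictly increases.

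\textbf{Main obstacle.} I expect step (ii)/(iii) to be the hardest: showing there are exactly $t-1$ bad vertices and that no good-good edge lies inside a part. This requires careful embedding of color-critical graphs using high-degree vertices. It also requires eigenvector estimates precise enough to beat $\lambda(H)$ when $|W|\le t-2$, since the spectral gains involved are only of order $O(1)$.
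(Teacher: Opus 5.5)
Your outline has the same architecture as the paper's proof of Theorem~\ref{theorem2} at $s=2$. (The paper only cites this statement; it recovers it for $t\ge 2$ by reducing to $tK^+(n_1,\dots,n_r)$.) The shared steps are a stability partition, vertex-switching for degree and eigenvector bounds, at most $t-1$ bad vertices, then completion and balancing. Two steps do not go through as written.

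\textbf{The edge count.} From $\lambda\ge(1-1/r)n-O(1)$ and $\lambda^2\le 2e(G)$ you only get $e(G)\ge(1-1/r)^2n^2/2-O(n)$. This is well below the Tur\'{a}n density, so Erd\H{o}s--Simonovits stability cannot be invoked this way. You must apply spectral stability directly to $\lambda$, via Nikiforov's theorem; the paper uses its clique-tensor form, Lemma~\ref{liuESS}. In (ii), also note that embedding through a bad vertex uses only its $\epsilon n$ neighbours per part. So the stability error must be far smaller than a power $\epsilon^{O(h)}$ of the bad-vertex threshold; the paper takes $\xi^{6h}$ against $2\xi^3 n$ in Lemma~\ref{3112}.

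\textbf{The real gap: step (iii).} It is out of order and omits the key mechanism. A good--good edge inside a part, together with the $|W|$ bad vertices, yields $|W|+1$ disjoint copies. That is a contradiction only if $|W|=t-1$. For smaller $|W|$ it only shows that the graph $B$ of within-part edges among good vertices has matching number at most $t-1-|W|$ (Lemma~\ref{312}). So $r$-partiteness of $G-W$ cannot precede $|W|=t-1$.

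Your sentence ``comparing with $H$ gives $\lambda<\lambda(H)$'' is an assertion, not an argument. A direct comparison would need an upper bound on $\lambda(G)$ sharper than the $\Theta(1)$ gap $\lambda(K_{t-1}+T_r(n-t+1))-\lambda(K_{t-2}+T_r(n-t+2))$, and that needs exactly the ingredients below.

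The paper uses a local switching instead:
\begin{enumerate}[label=(\alph*)]
\item It shows $\Delta(B)<th$ (Lemma~\ref{355}). Otherwise, joining a vertex of large $B$-degree to its whole part keeps the graph free (via Lemma~\ref{344}) and increases $\lambda$.
\item Chv\'{a}tal--Hanson then gives $e(B)=O(1)$.
\item It deletes $E(B)$ and joins a set $S$ of $t-1-|W|$ good vertices to the rest of their part (Lemma~\ref{313}).
\end{enumerate}
The new graph is still free. Any copy of an $F_i$ is not $r$-colourable, so it uses a within-part edge, and every within-part edge now meets one of the $t-1$ vertices of $W\cup S$. Meanwhile $x^{\top}Ax$ gains $\Omega(n)$ and loses $O(1)$, by the uniform bound $x_v\ge c>0$ (Lemma~\ref{310}, proved after connectivity, Lemma~\ref{322}).

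This also dissolves your ``main obstacle''. The gain is $\Theta(n)$ in the Rayleigh numerator against $\|x\|_2^2=\Theta(n)$, so no delicate $O(1)$ eigenvalue comparison with $H$ is ever needed. Only after $|W|=t-1$ does $\mu(B)=0$ follow. Then your final step — completing to $K_{t-1}+K(n_1,\dots,n_r)$, which is $F$-free, and balancing — applies.
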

The clique spectral extremal problem is a spectral version of the generalized Tur\'{a}n number ${\rm ex}(n,K_s,F)$,
where ${\rm ex}(n,K_s,F)$ is the maximum number of $K_s$ among all $F$-free graphs on $n$ vertices for given graphs $K_s$ and $F$.
In particular, 
when $s=2$,
clique spectral extremal problem reduces to spectral Tur\'{a}n-type problem.
The clique tensor of a graph and its spectral radius  were introduced by Liu and Bu \cite{Liu2}. 
Let ${\rm spex}_s(n,F)$ denote the maximum $s$-clique spectral radius among all $F$-free graphs on $n$ vertices.
In 2023,
Liu and Bu \cite{Liu2} determined that $T_r(n)$ is the unique graph with respect to ${\rm spex}_r(n,K_{r+1})$.
In 2025,
Yu and Peng \cite{Yu2025} further determined that ${\rm spex}_s(n,K_{r+1})=\rho_s(T_r(n))$ for $2\leq s\leq r$,
and $T_r(n)$ is the unique extremal graph.
In 2026,
Xu and Fan \cite{Xu} determined that $K_{t-1}+T_r(n-t+1)$ is the unique graph with respect to ${\rm spex}_s(n,tK_{r+1})$ for $2\leq s\leq r$.
For more results on clique spectral extremal problem,
see \cite{Yu,Yan,Liu1,Liu3}.

In this paper,
we focus on the maximum $s$-clique spectral radius among all $\bigcup_{i=1}^tF_i$-free graphs,
where $F_1, F_2, \dots,F_t$ are $t$ disjoint color-critical graphs with chromatic number $r+1$.
Firstly,
we determine the graph with respect to ${\rm spex}_s(n,tK^+(n_1,n_2,\dots,n_r))$.

\begin{theorem}\label{theorem1}
Given positive integers $s,t,r,n_1,n_2,\dots,n_r$ with $2\leq s\leq r,t\geq 2$ and $n_1\geq 2$.
Then $K_{t-1}+T_r(n-t+1)$ is the unique graph with respect to ${\rm spex}_s(n,tK^+(n_1,n_2,\dots,n_r))$ for sufficiently large $n$.
\end{theorem}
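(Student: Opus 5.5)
We follow the standard stability-plus-refinement scheme for spectral Turán problems, adapted to the $s$-clique tensor. Let $G$ be an extremal graph for ${\rm spex}_s(n,tK^+(n_1,\dots,n_r))$ and let $\mathbf{x}$ be its positive unit Perron vector for the clique tensor ($\sum x_v^s=1$), so that for every vertex $v$
\[
\rho_s(G)x_v^{s-1}=\sum_{\{v,u_2,\dots,u_s\}\in C_s(G)}x_{u_2}\cdots x_{u_s}.
\]
Since $K_{t-1}+T_r(n-t+1)$ is $tK^+(n_1,\dots,n_r)$-free, a direct computation gives the lower bound $\rho_s(G)\ge\rho_s(K_{t-1}+T_r(n-t+1))\ge\binom{r-1}{s-1}\big(\tfrac{n}{r}\big)^{s-1}+c\,n^{s-2}$ for some $c>0$ depending on $t$.

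\textbf{Step 1 (counting and stability).} Use the known bound $\rho_s(G)\le c_s\,|C_s(G)|^{(s-1)/s}$ (the spectral analogue of Wilf/Nikiforov for clique tensors). Combined with the generalized Turán bound ${\rm ex}(n,K_s,tK^+)\le \mathcal N_s(T_r(n))+O(n^{s-1})$, this yields $|C_s(G)|\ge \mathcal N_s(T_r(n))-o(n^s)$. We then invoke the generalized Erdős–Simonovits stability theorem (for $K_s$-counts in $F$-free graphs with $\chi(F)=r+1$; note $\chi(tK^+)=r+1$). It gives a partition $V(G)=V_1\cup\dots\cup V_r$ with $\sum_i e(G[V_i])=o(n^2)$ and $|V_i|=n/r+o(n)$.

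\textbf{Step 2 (eigenvector structure).} From the eigen-equation we show $x_v$ is roughly uniform on typical vertices. Define the exceptional set $L=\{v:\ d_{V_i}(v)\ge \varepsilon n \text{ for } v\in V_i\}$, i.e.\ vertices with many neighbours in their own part. Embedding $tK^+$ shows $|L|\le t-1$: each such vertex, together with the common neighbourhoods in the other parts, supports a copy of $K^+$ disjoint from the others. Conversely, we show every $v$ with $x_v$ noticeably large (say $x_v\ge (1-\delta)\max x$) has almost full degree. Using extremality and a switching argument (Lemma: deleting the edges at $v$ and joining $v$ completely to all parts except its own does not decrease $\rho_s$ if $x_v$ is small), we get the following: vertices outside $L$ have $O(1)$ neighbours in their own part and almost full degree to the other parts. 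Vertices in $L$ have degree $n-o(n)$.

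\textbf{Step 3 (exact structure).} We show $|L|=t-1$. If $|L|\le t-2$, then $G-L$ is $K^+$-free after the cleanup, i.e.\ essentially $r$-partite. Hence $G$ is a subgraph of $K_{t-2}+$(an $r$-partite graph plus $O(1)$ edges), and comparing $\rho_s$ with the lower bound from the start contradicts the extra $c\,n^{s-2}$ term. With $|L|=t-1$, $G-L$ must be $K^+(n_1,\dots,n_r)$-free. Any edge inside some $V_i\setminus L$ would, with the near-complete bipartite structure, create a $K^+$ disjoint from $t-1$ others built using $L$. So $G-L$ is $r$-partite, and by maximality $L$ is joined to everything, giving $G=K_{t-1}+K(m_1,\dots,m_r)$. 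Finally, balancing via an eigenvector comparison (moving a vertex from a larger to a smaller part strictly increases $\rho_s$, as in Yu–Peng's argument for $T_r(n)$) forces $G=K_{t-1}+T_r(n-t+1)$.

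\textbf{Main obstacle.} The hardest step is Step 2: controlling the Perron vector of the clique tensor well enough to show the exceptional vertices are exactly the high-entry vertices and that edges inside parts contribute only lower order. The tensor eigen-equation is multilinear, so the usual $\rho x_v=\sum x_u$ estimates become sums of products over $(s-1)$-cliques. My first attempt is to bound these through $\binom{r-1}{s-1}$-type counts of transversal cliques together with an error of $O(\varepsilon n^{s-1})x_{\max}^{s-1}$, mimicking the Xu–Fan proof for $tK_{r+1}$. Since $K^+(n_1,\dots,n_r)$ is color-critical with an explicit embedding, only the embedding lemma needs to change.
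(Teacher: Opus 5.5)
Your outline (stability, exceptional vertices, embedding, switching, then $|L|=t-1$, universality of $L$, balancing) follows the paper's architecture. The step that excludes $|L|\le t-2$, however, does not work as written.

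First, when $|L|\le t-2$, $G-L$ need not be $K^+$-free. What the embedding arguments actually give is that the within-part graph $H=\bigcup_i G[V_i\setminus L]$ has matching number at most $t-1-|L|$, since disjoint inner edges and the vertices of $L$ extend to pairwise disjoint copies of $K^+$. Only together with the bounded own-part degree does this give $e(H)=O(1)$ (Chv\'atal--Hanson). You need this explicitly: bounded degree alone allows $\Theta(n)$ inner edges, whose contribution to $\rho_s$ is of the same order $n^{s-2}$ as the effect you want to exploit.

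More seriously, ``contradicts the extra $cn^{s-2}$ term'' is not a contradiction. For $t\ge 3$ the graph $K_{t-2}+T_r(n-t+2)$ also satisfies $\rho_s\ge\binom{r-1}{s-1}(n/r)^{s-1}+c'n^{s-2}$ with $c'>0$. The constant $c$ obtained from $\frac{s}{n}|C_s|$ is not sharp, so this lower bound cannot distinguish $t-2$ from $t-1$ dominating vertices. Your route would need a sharp comparison such as $\rho_s\bigl(K_{t-2}+K(m_1,\dots,m_r)+E_0\bigr)<\rho_s\bigl(K_{t-1}+T_r(n-t+1)\bigr)$ for $|E_0|=O(1)$, and you do not supply one.

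The paper avoids such asymptotics with a local switching:
\begin{itemize}
\item delete the $O(1)$ edges of $H$;
\item pick $g=t-1-|W|$ vertices $S\subseteq V_1\setminus W$ and join $S$ to all of $V_1\setminus(W\cup S)$.
\end{itemize}
Every copy of $K^+$ contains a $K_{r+1}$, hence an edge inside some part, hence a vertex of the $(t-1)$-set $W\cup S$. So the new graph is still $tK^+$-free. Using the entrywise lower bound on $x$, the numerator $x^{\top}\mathcal A_s x^{s-1}$ gains $\Omega(n^{s-1})$ and loses only $O(n^{s-2})$.

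Two further steps need repair.

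In Step 1, a bound $\rho_s(G)\le c_s|C_s(G)|^{(s-1)/s}$ with a universal constant is asymptotically attained by complete graphs, not by $T_r(n)$. It therefore yields only $|C_s(G)|\ge\theta\,\mathcal N_s(T_r(n))$ for some $\theta<1$; already for $s=2$, $\rho\le\sqrt{2e}$ gives only $e\ge(1-\frac1r)^2\frac{n^2}{2}$. The generalized Tur\'an upper bound plays no role in a lower bound. Instead, apply the spectral stability theorem for clique tensors (Lemma~\ref{liuESS}) directly to your lower bound on $\rho_s(G)$.

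You also take a positive Perron vector from the outset. But the clique tensor of an extremal graph is not a priori weakly irreducible. Your switching arguments, and ``by maximality $L$ is joined to everything'', need strict increases, which fail if $x_u=0$. The paper first works with a nonnegative eigenvector, treating $x_u=0$ by the perturbation $y=x+\frac12x_{v_0}\mathbf e_u$. Only after proving that $G^*$ is $tK_{r+1}$-free and hence $s$-clique connected (Lemma~\ref{322}) does it use a positive eigenvector and the lower bound on its entries.
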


Clearly,
for an arbitrary color-critical graph $F$ with chromatic number $r+1$,
there exist positive integers $\hat{n}_1,\hat{n}_2,\dots,\hat{n}_r$ such that $F$ is a subgraph of $K^+(\hat{n}_1,\hat{n}_2,\dots,\hat{n}_r)$.
We obtain the following result. 
\begin{theorem}\label{theorem2} 
Given positive integers $s,r,t$ with $2\leq s\leq r$,
$t\geq2$.
Let $F_1,F_2,\dots,F_t$ be $t$ disjoint color-critical graphs with chromatic number $r+1$.
Then $K_{t-1}+T_r(n-t+1)$ is the unique graph with respect to ${\rm spex}_s(n,\bigcup_{i=1}^tF_i)$ for sufficiently large $n$.
\end{theorem}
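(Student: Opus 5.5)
Theorem \ref{theorem2} follows from Theorem \ref{theorem1} by a sandwich argument. For each $i\in\{1,\dots,t\}$, $F_i$ is color-critical with $\chi(F_i)=r+1$. Hence there exist positive integers $n^{(i)}_1\geq 2,n^{(i)}_2,\dots,n^{(i)}_r$ with $F_i\subseteq K^+(n^{(i)}_1,\dots,n^{(i)}_r)$. To see this, delete a critical edge $e=uv$ to get an $r$-coloring of $F_i-e$. Since $F_i$ itself is not $r$-colorable, $u$ and $v$ lie in the same color class. Put that class first and enlarge all classes as needed.

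We then take a common bound. Set $\hat n_j=\max_i n^{(i)}_j$ for $j=1,\dots,r$. Since $K^+(n^{(i)}_1,\dots,n^{(i)}_r)\subseteq K^+(\hat n_1,\dots,\hat n_r)$, each $F_i$ is a subgraph of $H:=K^+(\hat n_1,\dots,\hat n_r)$. The $F_i$ are vertex-disjoint, so $\bigcup_{i=1}^t F_i\subseteq tH$. Consequently every $\bigcup_{i=1}^tF_i$-free graph is $tH$-free, which gives
\[
{\rm spex}_s\Big(n,\bigcup_{i=1}^tF_i\Big)\leq {\rm spex}_s(n,tH).
\]
By Theorem \ref{theorem1}, with $\hat n_1\geq 2$ and $t\geq 2$, the right side equals $\rho_s(K_{t-1}+T_r(n-t+1))$ for all sufficiently large $n$. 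Moreover, $K_{t-1}+T_r(n-t+1)$ is the unique $tH$-free graph attaining it.

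It remains to check that $K_{t-1}+T_r(n-t+1)$ is itself $\bigcup_{i=1}^tF_i$-free. Suppose it contained vertex-disjoint copies of $F_1,\dots,F_t$. The set $K_{t-1}$ has only $t-1$ vertices, so some copy of some $F_i$ avoids $K_{t-1}$ entirely. That copy then lies in $T_r(n-t+1)$, which is $r$-partite, contradicting $\chi(F_i)=r+1$. Thus equality holds in the displayed inequality.

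Uniqueness follows from the same sandwich. Let $G$ be any $\bigcup_{i=1}^tF_i$-free graph on $n$ vertices attaining the maximum. Then $G$ is $tH$-free and attains ${\rm spex}_s(n,tH)$, so $G\cong K_{t-1}+T_r(n-t+1)$ by the uniqueness in Theorem \ref{theorem1}.

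All the difficulty lies in Theorem \ref{theorem1}. The only step here that needs care is the embedding of a color-critical graph into $K^+$, namely that the endpoints of the critical edge share a color class. The remaining steps are routine.
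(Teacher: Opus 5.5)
Your proposal is correct and follows the paper's proof essentially step for step. You embed each $F_i$ in a common $K^+(\hat n_1,\dots,\hat n_r)$, sandwich ${\rm spex}_s(n,\bigcup_{i=1}^tF_i)$ between $\rho_s(K_{t-1}+T_r(n-t+1))$ and ${\rm spex}_s(n,tK^+(\hat n_1,\dots,\hat n_r))$, and inherit uniqueness from Theorem~\ref{theorem1}, while also justifying two points the paper only asserts: that the endpoints of the critical edge share a color class (giving $\hat n_1\geq 2$), and that $K_{t-1}+T_r(n-t+1)$ is $\bigcup_{i=1}^tF_i$-free by pigeonhole.
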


\section{Preliminaries}
In this section,
we will introduce some necessary lemmas for the proofs of the main theorems.

Let $\mathbb{C}^{[k,n]}$ be the set of $k$-order $n$-dimensional complex tensors.
For $\mathcal{A}=(a_{i_1i_2\dots i_k})\in \mathbb{C}^{[k,n]}$ and an $n$-dimensional complex vector $x=(x_1,x_2,\dots,x_n)^{\top}$,
the $\mathcal{A}x^{k-1}$ is an $n$-dimensional complex vector whose $i$-th component is $$(\mathcal{A}x^{k-1})_i=\sum_{i_2,\dots,i_k=1}^na_{ii_2\dots i_k}x_{i_2}\dots x_{i_k},$$
where $x^{[k-1]}=(x_1^{k-1},x_2^{k-1},\dots,x_n^{k-1})^{\top}$.
If there exist a complex number $\lambda$ and a nonzero vector $x$ such that $$\mathcal{A}x^{k-1}=\lambda x^{[k-1]},$$
then $\lambda$ is an \textit{eigenvalue} of $\mathcal{A}$ and $x$ is an \textit{eigenvector} of $\mathcal{A}$ corresponding to $\lambda$  \cite{Qi2005,Lim}.
The maximal modulus of all eigenvalues of $\mathcal{A}$ is called the spectral radius of $\mathcal{A}$,
denoted by $\rho(\mathcal{A})$.
The tensor $\mathcal{A}$ is called symmetric if its entries are invariant under any permutation of their indices.
If all elements of $\mathcal{A}$ are nonnegative,
then $\mathcal{A}$ is called the nonnegative tensor.
Qi \cite{Qi1} gave the following result for nonnegative symmetric tensors.
\begin{lemma}\label{Lemma1}{\rm (\cite{Qi1})}
Suppose that $\mathcal{A}$ is a $k$-order   $n$-dimensional symmetric nonnegative tensor.
Then
$$\rho(\mathcal{A})=\max\left\{x^{\top}\mathcal{A}x^{k-1}\ |\  x=(x_1,\dots,x_n)^{\top}\in \mathbb{R}^n_+,\sum_{i=1}^nx_i^k=1\right\},$$
where $\mathbb{R}_ + ^n$ is the set of $n$-dimensional nonnegative real vectors.
\end{lemma}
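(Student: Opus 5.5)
The plan is to prove the two inequalities separately. Write $f(x)=x^{\top}\mathcal{A}x^{k-1}$ and $S=\{x\in\mathbb{R}^n_+ : \sum_{i=1}^n x_i^k=1\}$, and put $\mu^*=\max_{x\in S}f(x)$. This maximum exists because $S$ is compact and $f$ is a polynomial. The two steps are: $\rho(\mathcal{A})\le \mu^*$, by a triangle-inequality argument on an arbitrary eigenpair, and $\mu^*\le\rho(\mathcal{A})$, by showing that a maximizer of $f$ on $S$ is an eigenvector with eigenvalue $\mu^*$.

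\emph{Upper bound on $\rho(\mathcal{A})$.} Let $\lambda\in\mathbb{C}$ be an eigenvalue with eigenvector $x\neq 0$, so that $\mathcal{A}x^{k-1}=\lambda x^{[k-1]}$. Taking moduli componentwise and using $a_{ii_2\dots i_k}\ge 0$ gives
$$|\lambda|\,|x_i|^{k-1}\le \sum_{i_2,\dots,i_k}a_{ii_2\dots i_k}|x_{i_2}|\cdots|x_{i_k}|=(\mathcal{A}|x|^{k-1})_i .$$
Multiply by $|x_i|$ and sum over $i$. This yields $|\lambda|\sum_i|x_i|^k\le f(|x|)$. After normalizing $y=|x|/\|x\|_k\in S$ and using that $f$ is homogeneous of degree $k$, we get $|\lambda|\le f(y)\le\mu^*$. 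Hence $\rho(\mathcal{A})\le\mu^*$.

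\emph{Lower bound on $\rho(\mathcal{A})$.} Let $x^*\in S$ attain $\mu^*$. By symmetry of $\mathcal{A}$, $\nabla f(x)=k\,\mathcal{A}x^{k-1}$, and the constraint $g(x)=\sum_i x_i^k$ has gradient $k\,x^{[k-1]}$, which is nonzero on $S$. I would apply the KKT conditions for maximization over $S$ (equivalently, a direct perturbation argument). These give a multiplier $\mu$ such that:
\begin{itemize}
\item $(\mathcal{A}x^{*k-1})_i=\mu (x_i^*)^{k-1}$ for every $i$ with $x^*_i>0$;
\item $(\mathcal{A}x^{*k-1})_i\le \mu\cdot 0=0$ for every $i$ with $x^*_i=0$, because increasing such a coordinate along the constraint surface cannot increase $f$ to first order.
\end{itemize}
Since $\mathcal{A}$ and $x^*$ are nonnegative, the inequality in the second case forces $(\mathcal{A}x^{*k-1})_i=0=\mu (x_i^*)^{k-1}$. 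So $\mathcal{A}x^{*k-1}=\mu x^{*[k-1]}$ holds in all coordinates. Multiplying by $x_i^*$ and summing (Euler's identity for homogeneous functions) gives $\mu=f(x^*)=\mu^*$. Thus $\mu^*$ is an eigenvalue, and $\mu^*\le\rho(\mathcal{A})$.

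The main obstacle is the boundary case in the second step, where some coordinates of $x^*$ vanish and the plain Lagrange condition does not apply. I would handle it with an explicit perturbation rather than by citing KKT. Fix an index $j$ with $x^*_j=0$ and replace $x^*_j$ by $\varepsilon$, rescaling the vector back onto $S$. The value of $f$ changes by $k\,\varepsilon\,(\mathcal{A}x^{*k-1})_j+O(\varepsilon^2)$, while the renormalization factor changes only by $O(\varepsilon^k)$. Since $k\ge 2$, maximality of $x^*$ then forces $(\mathcal{A}x^{*k-1})_j=0$. For interior coordinates, the standard Lagrange argument on the face $\{x_i>0 : i\in \operatorname{supp}(x^*)\}$ gives the equalities.
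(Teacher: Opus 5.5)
Your proof is correct. Note that the paper does not prove this lemma at all; it simply quotes it from Qi, so the natural comparison is with the standard proof of this variational formula.

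Your treatment of $\mu^*\le\rho(\mathcal{A})$ is essentially the standard one. A maximizer on $S$ satisfies the KKT conditions. At zero coordinates, the inequality $(\mathcal{A}x^{*k-1})_j\le 0$ is forced to be an equality by nonnegativity. Euler's identity then identifies the multiplier with $\mu^*$. Your explicit perturbation $x^*+\varepsilon e_j$, followed by renormalization, is a clean way to avoid checking a constraint qualification. It is valid precisely because the normalization changes only at order $\varepsilon^k=O(\varepsilon^2)$ when $k\ge 2$.

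Where you genuinely differ is the reverse inequality. The usual route gets $\rho(\mathcal{A})\le\mu^*$ from the Perron--Frobenius theorem for nonnegative tensors: $\rho(\mathcal{A})$ is an eigenvalue with a nonnegative eigenvector (Lemma~\ref{Lemma2}(1) in this paper), and one evaluates $f$ at that eigenvector. You instead bound $|\lambda|$ for every complex eigenpair directly, using the triangle inequality $|\lambda|\,\|x\|_k^k\le f(|x|)$.

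Your version is more elementary and self-contained. That step needs neither symmetry nor Perron--Frobenius theory. Combined with the KKT step, it actually recovers Lemma~\ref{Lemma2}(1) for symmetric tensors, since it shows that $\rho(\mathcal{A})=\mu^*$ is itself an eigenvalue with the nonnegative eigenvector $x^*$. Symmetry is still needed in your second half, but only to identify $\nabla f$ with $k\,\mathcal{A}x^{k-1}$.
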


Liu and Bu \cite{Liu2} introduced the definition of the $s$-clique tensor.
\begin{definition}\label{s-clique}{\rm (\cite{Liu2})}
Let $G$ be a graph on $n$ vertices.
The tensor $\mathcal{A}_s(G)=(a_{i_1i_2\dots i_s})$ is called the $s$-clique tensor of $G$,
where
$$a_{i_1i_2\dots i_s} = \begin{cases}
\frac{1}{(s-1)!}, & \{i_1,i_2, \dots, i_s\} \in C_s(G), \\
0, & otherwise.
\end{cases}$$
\end{definition}
The eigenvalue of the $s$-clique tensor is called the $s$-clique eigenvalue of $G$.
The spectral radius of the $s$-clique tensor is called the \textit{$s$-clique spectral radius} of $G$,
denoted by $\rho_s(G)$.
When $s=2$,
$\mathcal{A}_2(G)$ is the adjacency matrix and $\rho_2(G)$ is the spectral radius of $G$.

For two $k$-order $n$-dimensional nonnegative tensors $\mathcal{A}$ and $\mathcal{B}$,
if $\mathcal{A}-\mathcal{B}$ is nonnegative,
we write $\mathcal{A}\geq \mathcal{B}$.
\begin{lemma}\label{Lemma3}{\rm (\cite{Khan1})}
Suppose $\mathcal{A}\geq \mathcal{B} \geq0$, then $\rho({\mathcal A}) \geq \rho ({\mathcal B})$.
Furthermore, if $\mathcal{A}$ is weakly irreducible and $\mathcal{A} \neq \mathcal{B}$,
then $\rho ({\mathcal A}) > \rho ({\mathcal B})$.
\end{lemma}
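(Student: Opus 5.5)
The statement just above is Lemma \ref{Lemma3}, the monotonicity of the spectral radius for nonnegative tensors, quoted from \cite{Khan1}. I would prove it from the variational characterization in Lemma \ref{Lemma1} for the weak inequality, and from the Perron--Frobenius theory for weakly irreducible nonnegative tensors for the strict one. In our applications both tensors are clique tensors, hence symmetric, so I work in that setting.

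\emph{Weak inequality.} Let $x\in\mathbb{R}^n_+$ with $\sum_i x_i^k=1$ attain the maximum in Lemma \ref{Lemma1} for $\mathcal{B}$, so that $\rho(\mathcal{B})=x^{\top}\mathcal{B}x^{k-1}$. Since $x\geq 0$ and $\mathcal{A}-\mathcal{B}\geq 0$, every monomial $(a_{i_1\dots i_k}-b_{i_1\dots i_k})x_{i_1}\cdots x_{i_k}$ is nonnegative. Therefore
\[
x^{\top}\mathcal{A}x^{k-1}-x^{\top}\mathcal{B}x^{k-1}=\sum_{i_1,\dots,i_k}(a_{i_1\dots i_k}-b_{i_1\dots i_k})x_{i_1}\cdots x_{i_k}\geq 0,
\]
and $\rho(\mathcal{A})\geq x^{\top}\mathcal{A}x^{k-1}\geq\rho(\mathcal{B})$. (For nonsymmetric tensors one would instead use the Collatz--Wielandt formula $\rho(\mathcal{A})=\max_{x\geq0,x\neq0}\min_{x_i>0}(\mathcal{A}x^{k-1})_i/x_i^{k-1}$, which is also monotone in the entries.)

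\emph{Strict inequality.} Assume $\mathcal{A}$ is weakly irreducible and $\mathcal{A}\neq\mathcal{B}$. Suppose, for contradiction, that $\rho(\mathcal{A})=\rho(\mathcal{B})$.
\begin{enumerate}
\item Let $x$ be the maximizer for $\mathcal{B}$ chosen above. Both inequalities in the chain are then equalities, so $x$ is also a maximizer for $\mathcal{A}$.
\item A maximizer of the Rayleigh-type quotient is a critical point of the Lagrangian, so it satisfies $\mathcal{A}x^{k-1}=\rho(\mathcal{A})x^{[k-1]}$. Hence $x$ is a nonnegative eigenvector of $\mathcal{A}$ for $\rho(\mathcal{A})$.
\item By the Perron--Frobenius theorem for weakly irreducible nonnegative tensors (Friedland--Gaubert--Han), such an eigenvector is strictly positive.
\item With $x>0$, equality in the displayed sum forces $a_{i_1\dots i_k}=b_{i_1\dots i_k}$ for every index tuple, contradicting $\mathcal{A}\neq\mathcal{B}$.
\end{enumerate}

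The main obstacle is step 3, the positivity of a nonnegative eigenvector of a weakly irreducible tensor, which is where irreducibility is genuinely used. If I cannot cite it directly, I would argue as follows. Let $S=\{i:x_i=0\}$ and suppose $S\neq\emptyset$. For $i\in S$ the eigen-equation reads
\[
0=\sum_{i_2,\dots,i_k}a_{ii_2\dots i_k}x_{i_2}\cdots x_{i_k}.
\]
Every term is nonnegative, so $a_{ii_2\dots i_k}=0$ whenever $i\in S$ and all of $i_2,\dots,i_k$ lie outside $S$. Then there is no edge from $S$ to its complement in the representing digraph, contradicting strong connectivity, i.e.\ weak irreducibility. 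For the strict part this positivity argument, rather than the maximization, carries the proof.
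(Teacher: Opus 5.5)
The paper gives no proof of this lemma (it is quoted from \cite{Khan1}), so your argument has to stand on its own. The overall route is sound: the maximizer for $\mathcal{B}$ becomes a maximizer for $\mathcal{A}$, and positivity then forces $\mathcal{A}=\mathcal{B}$. However, step 3 is not established.

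Lemma \ref{Lemma2}(2), the form in which the Friedland--Gaubert--Han theorem is available here, only gives a unique \emph{positive} eigenvector. It does not say that a nonnegative eigenvector for $\rho(\mathcal{A})$ has no zero entries. Your fallback argument does not supply this either. From $(\mathcal{A}x^{k-1})_i=0$ for $i\in S$ you only get $a_{ii_2\dots i_k}=0$ when \emph{all} of $i_2,\dots,i_k$ lie outside $S$. But an arc $(i,j)$ of $\Gamma_{\mathcal{A}}$ with $i\in S$, $j\notin S$ can come from a nonzero entry in which $j$ occurs together with another index from $S$. What you have excluded is reducibility in the stronger Chang--Pearson--Zhang sense, not failure of weak irreducibility.

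Since your argument never uses that the eigenvalue is $\rho(\mathcal{A})$, it would prove that every nonnegative eigenvector of a weakly irreducible tensor is positive, which is false. Take the $3$-clique tensor of $K_3$: it is weakly irreducible with nonzero entries $a_{\sigma(1)\sigma(2)\sigma(3)}=\frac12$. The vector $x=(1,0,0)^{\top}$ satisfies $\mathcal{A}x^{2}=0=0\cdot x^{[2]}$, and $S=\{2,3\}$ passes your test even though $a_{213}\neq0$ gives the arc $(2,1)$. In fact, for $s\ge 3$ every clique tensor passes your test with $S=V(G)\setminus\{v\}$, because entries with a repeated index vanish. So for exactly the tensors of this paper, the condition you derive carries no information.

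The missing ingredient is to use maximality (or the value $\rho(\mathcal{A})$) rather than the eigen-equation alone. Two fixes work.

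\emph{Perturbation (symmetric case).} Suppose $S=\{i:x_i=0\}\neq\emptyset$. Strong connectivity gives a nonzero entry of $\mathcal{A}$ with $m_0$ indices in $S$ and $k-m_0$ outside, where $1\le m_0\le k-1$. For $y=x+\delta\mathbf{1}_S$ all terms are nonnegative, so $y^{\top}\mathcal{A}y^{k-1}\ge\rho(\mathcal{A})+c\,\delta^{m_0}$ with $c>0$, while $\sum_i y_i^k=1+|S|\delta^k$. Since $m_0<k$, for small $\delta>0$ the normalized value exceeds $\rho(\mathcal{A})$, contradicting Lemma \ref{Lemma1}. This gives $x>0$ directly. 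It also makes step 2 unnecessary, whose Lagrange-multiplier justification at a boundary point of the orthant you skipped.

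\emph{Comparison with the Perron vector (general case).} Let $u>0$ be the Perron vector of $\mathcal{A}$ and $y\ge 0$ an eigenvector of $\mathcal{B}$ for $\rho(\mathcal{B})=\rho(\mathcal{A})$. Put $t=\max_i y_i/u_i$ and $J=\{i:y_i=tu_i\}$. For $i\in J$ the chain $\rho(\mathcal{A})y_i^{k-1}\le(\mathcal{A}y^{k-1})_i\le t^{k-1}(\mathcal{A}u^{k-1})_i=\rho(\mathcal{A})y_i^{k-1}$ is tight. Hence every arc leaving $J$ ends in $J$, so $J=\{1,\dots,n\}$ and $y=tu>0$. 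Then $\mathcal{B}y^{k-1}=\mathcal{A}y^{k-1}$ forces $\mathcal{A}=\mathcal{B}$. This argument needs no symmetry, so it also covers the nonsymmetric tensors allowed in the statement, which your proof set aside.
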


For $\mathcal{A}=(a_{i_1i_2\dots i_k}) \in \mathbb{C}^{[k,n]}$,
let $\Gamma_{\mathcal{A}}=(V_{\mathcal{A}},E_{\mathcal{A}})$ be the associated digraph of $\mathcal{A}$ with the vertex set $V_{\mathcal{A}}=\{1,2,\dots,n\}$ and the arc set $E_{\mathcal{A}}=\{(i,j):a_{ii_2\dots i_k}\neq0,j\in\{i_2,\dots,i_k\}\}$.
$\Gamma_{\mathcal{A}}$ is called strongly connected if there exists a directed path from $i$ to $j$ in $\Gamma_{\mathcal{A}}$ for any two distinct $i, j \in V_{\mathcal{A}}$.
A nonnegative tensor $\mathcal{A}$ is weakly irreducible if and only if $\Gamma_{\mathcal{A}}$ is strongly connected \cite{Friedland1}.

\begin{lemma}\label{Lemma2}
{\rm (1). (\cite{Chang})} If $\mathcal{A}$ is a $k$-order  $n$-dimensional  nonnegative tensor,
then $\rho(\mathcal{A})$ is an eigenvalue of $\mathcal{A}$,
with a nonnegative eigenvector corresponding to it;\\
{\rm (2). (\cite{Friedland1})} If furthermore $\mathcal{A}$ is weakly irreducible, then $\rho ({\mathcal A})$ is the unique positive eigenvalue of $\mathcal{A}$ corresponding to a unique positive eigenvector up to scaling.
\end{lemma}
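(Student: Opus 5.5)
The statement is Lemma~\ref{Lemma2}, a Perron--Frobenius type result for nonnegative tensors. I would prove part (1) by perturbation and a fixed point argument, and part (2) by a comparison argument along the digraph $\Gamma_{\mathcal A}$. The one step I cannot close by an elementary argument is the positivity of the Perron vector in part (2); there I would invoke a nonlinear Perron--Frobenius theorem.

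\emph{Part (1).} For $\varepsilon>0$ let $\mathcal A_\varepsilon=\mathcal A+\varepsilon\mathcal J$, where $\mathcal J$ is the all-ones tensor.
\begin{enumerate}[label=(\roman*)]
\item On the simplex $\Delta=\{x\ge 0:\sum_i x_i=1\}$ consider
\[
\Phi(x)=\frac{(\mathcal A_\varepsilon x^{k-1})^{[1/(k-1)]}}{\sum_i(\mathcal A_\varepsilon x^{k-1})_i^{1/(k-1)}}.
\]
It is continuous, because every coordinate of $\mathcal A_\varepsilon x^{k-1}$ is positive on $\Delta$. Brouwer's theorem gives a fixed point $x_\varepsilon$. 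This is a positive eigenvector of $\mathcal A_\varepsilon$ with some eigenvalue $\lambda_\varepsilon>0$.
\item Let $(\mu,y)$ be any eigenpair of $\mathcal A$ or of $\mathcal A_\varepsilon$. The triangle inequality gives $|\mu|\,|y|^{[k-1]}\le\mathcal A_\varepsilon|y|^{k-1}$.
\item Put $\tau=\max_i|y_i|/(x_\varepsilon)_i$ and choose an index $i$ attaining it. Using $|y|\le\tau x_\varepsilon$ coordinatewise,
\[
|\mu|\,\tau^{k-1}(x_\varepsilon)_i^{k-1}\le(\mathcal A_\varepsilon|y|^{k-1})_i\le\tau^{k-1}\lambda_\varepsilon(x_\varepsilon)_i^{k-1}.
\]
Hence $|\mu|\le\lambda_\varepsilon$. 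So $\lambda_\varepsilon=\rho(\mathcal A_\varepsilon)\ge\rho(\mathcal A)$.
\item As $\varepsilon\to 0$, compactness of $\Delta$ yields a subsequence with $x_\varepsilon\to x\in\Delta$ and $\lambda_\varepsilon\to\lambda\ge\rho(\mathcal A)$. The values $\lambda_\varepsilon$ are bounded, by the row-sum bound $\lambda_\varepsilon\le\max_i(\mathcal A_\varepsilon\mathbf 1)_i$ obtained from (iii). In the limit $\mathcal Ax^{k-1}=\lambda x^{[k-1]}$ with $x\ge 0$ and $x\neq 0$.
\item Since $\lambda$ is an eigenvalue of $\mathcal A$, we also have $\lambda\le\rho(\mathcal A)$. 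So $\lambda=\rho(\mathcal A)$, and $x$ is the required nonnegative eigenvector.
\end{enumerate}

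\emph{Part (2), uniqueness given positivity.} Let $x>0$ be an eigenvector for $\rho=\rho(\mathcal A)$, and let $y>0$ be an eigenvector for some eigenvalue $\mu>0$.
\begin{enumerate}[label=(\roman*)]
\item Put $t=\max_i x_i/y_i$ and let $S$ be the set of indices attaining it. For $i\in S$, using $x\le ty$,
\[
\rho x_i^{k-1}=\sum a_{ii_2\dots i_k}x_{i_2}\cdots x_{i_k}\le t^{k-1}\sum a_{ii_2\dots i_k}y_{i_2}\cdots y_{i_k}=t^{k-1}\mu y_i^{k-1}.
\]
Since $x_i=ty_i$, this gives $\rho\le\mu$. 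We already have $\mu\le\rho$, so $\mu=\rho$.
\item With $\mu=\rho$, equality must hold termwise. Thus for $i\in S$, every nonzero $a_{ii_2\dots i_k}$ has all of $i_2,\dots,i_k$ in $S$. So every arc of $\Gamma_{\mathcal A}$ leaving $S$ stays in $S$.
\item Strong connectivity of $\Gamma_{\mathcal A}$ then forces $S=\{1,\dots,n\}$. Hence $x=ty$, which is uniqueness up to scaling.
\end{enumerate}

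\emph{Part (2), positivity: the main obstacle.} It remains to show that some eigenvector for $\rho(\mathcal A)$ is strictly positive, and that $\rho(\mathcal A)>0$; the latter holds because weak irreducibility forces nonzero entries when $n\ge 2$. The naive argument fails. If $x\ge0$ is an eigenvector with zero set $Z\neq\emptyset$, then for $i\in Z$ each nonzero $a_{ii_2\dots i_k}$ only needs \emph{some} index in $Z$. This does not contradict an arc $(i,j)$ with $j\notin Z$, so strong connectivity alone does not propagate positivity as it does for matrices.

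My first attempt would be the homogeneous, order-preserving map $F(x)=(\mathcal Ax^{k-1})^{[1/(k-1)]}$. I would check that its associated digraph, in the sense of Gaubert--Gunawardena, coincides with $\Gamma_{\mathcal A}$. I would then apply their nonlinear Perron--Frobenius theorem, which asserts a positive eigenvector when that digraph is strongly connected. This is the route Friedland et al.\ take in \cite{Friedland1}. Combined with the uniqueness argument above, it completes part (2).
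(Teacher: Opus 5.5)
Your proposal is correct. The paper gives no argument for this lemma; both parts are simply quoted. Your write-up is therefore more self-contained than the source:
\begin{itemize}
\item part (1) uses the standard Brouwer-plus-perturbation argument for $\mathcal{A}+\varepsilon\mathcal{J}$;
\item the uniqueness half of part (2) uses an elementary max-ratio argument whose maximizing set is closed under the arcs of $\Gamma_{\mathcal{A}}$;
\item only the existence of a positive eigenvector is delegated to the Gaubert--Gunawardena theorem, the same nonlinear Perron--Frobenius input that underlies the cited result of Friedland et al.
\end{itemize}
Your observation that the zero set of a nonnegative eigenvector does not propagate under strong connectivity is also correct.

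Two links should be written out.
\begin{itemize}
\item Gaubert--Gunawardena requires $F(x)=(\mathcal{A}x^{k-1})^{[1/(k-1)]}$ to map the open positive cone into itself, so every slice $(a_{ii_2\dots i_k})_{i_2,\dots,i_k}$ must be nonzero. This holds because every vertex of a strongly connected $\Gamma_{\mathcal{A}}$ with $n\ge 2$ has an out-arc. It is this fact, not the mere existence of some nonzero entry, that gives $\rho(\mathcal{A})>0$ once a positive eigenvector exists.
\item The theorem only produces $x>0$ with $F(x)=\lambda x$, i.e.\ $\mathcal{A}x^{k-1}=\lambda^{k-1}x^{[k-1]}$. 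Your uniqueness step assumes a positive eigenvector \emph{for} $\rho(\mathcal{A})$. The identification $\lambda^{k-1}=\rho(\mathcal{A})$ follows by rerunning step (iii) of part (1) with $(\mathcal{A},x)$ in place of $(\mathcal{A}_\varepsilon,x_\varepsilon)$.
\end{itemize}

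The paper only ever applies this lemma to the clique tensor $\mathcal{A}_s(G^*)$, which is symmetric. There, the step you could not close elementarily has a short proof via Lemma~\ref{Lemma1}.
\begin{itemize}
\item Take a maximizer $x$ of $x^{\top}\mathcal{A}x^{k-1}$ over $\{x\ge 0:\sum_i x_i^k=1\}$.
\item Suppose its zero set $Z$ were nonempty. An arc of $\Gamma_{\mathcal{A}}$ from $Z$ to its complement gives a nonzero entry $a_{i_1\dots i_k}$ with $m$ of its positions in $Z$, where $1\le m\le k-1$.
\item Raising those coordinates from $0$ to $\delta$ increases $x^{\top}\mathcal{A}x^{k-1}$ by at least a constant times $\delta^{m}$, but increases $\sum_i x_i^k$ only by $O(\delta^{k})$. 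For small $\delta$ this contradicts maximality.
\item Hence $x>0$. Using symmetry, $\nabla\, x^{\top}\mathcal{A}x^{k-1}=k\,\mathcal{A}x^{k-1}$, so the Lagrange condition at this interior point reads $\mathcal{A}x^{k-1}=\nu x^{[k-1]}$ with $\nu=x^{\top}\mathcal{A}x^{k-1}=\rho(\mathcal{A})$.
\end{itemize}
Together with your uniqueness argument, this proves part (2) for symmetric tensors without any nonlinear Perron--Frobenius theory. Your general route is still what the lemma needs as stated, for arbitrary nonnegative tensors.
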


A tensor can be permuted into a block lower triangular tensor,
and its eigenvalues remain invariant under such a permutation.

\begin{lemma}\label{Lemma4}{\rm (\cite{Shao1})}
Let $\mathcal{A}$ be a $k$-order $n$-dimensional tensor.
Then there exist positive integers $\ell$ and $n_1,\dots,n_{\ell}$ with $\sum_{i=1}^{\ell}n_i=n$ such that $\mathcal{A}$ can be permuted into an $(n_1,\dots,n_{\ell})$-lower triangular block tensor,
whose diagonal blocks ${\mathcal A_1},\dots,{\mathcal A_{\ell}}$ are weakly irreducible.
Furthermore, we have $\rho ({\mathcal A})=max\{\rho ({\mathcal A_1}),\dots,\rho({\mathcal A_{\ell}})\}$.
\end{lemma}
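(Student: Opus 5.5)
This is a result quoted from the literature (Shao's block-triangular decomposition); I sketch how I would establish it.

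The approach mirrors the Frobenius normal form for nonnegative matrices, using the associated digraph $\Gamma_{\mathcal A}$. First I decompose $\Gamma_{\mathcal A}$ into its strongly connected components $V_1,\dots,V_\ell$. Then I order them by a topological ordering of the condensation, which is acyclic. The order is chosen so that an arc $(i,j)$ with $i\in V_p$, $j\in V_q$, $p\neq q$, forces $q<p$. Relabeling the indices so that $V_1$ comes first, then $V_2$, and so on, is a permutation similarity $\mathcal A\mapsto P\mathcal A$. It sends $a_{i_1\dots i_k}$ to $a_{\sigma(i_1)\dots\sigma(i_k)}$ and clearly preserves eigenvalues, since $x\mapsto x\circ\sigma$ intertwines the equations $\mathcal Ax^{k-1}=\lambda x^{[k-1]}$.

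With this ordering, $a_{i i_2\dots i_k}\neq 0$ and $i\in V_p$ imply that every $i_j$ lies in $V_1\cup\dots\cup V_p$. This is exactly the lower triangular block structure. The diagonal block $\mathcal A_p$ is the restriction of $\mathcal A$ to indices in $V_p$. Its digraph contains the arcs of $\Gamma_{\mathcal A}$ inside $V_p$ that come from entries with all indices in $V_p$.

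The main subtlety is showing that each $\mathcal A_p$ is weakly irreducible. An arc $(i,j)$ inside $V_p$ might only arise from an entry that also involves indices in earlier blocks, and then it would not be an arc of $\Gamma_{\mathcal A_p}$. To handle this, I would instead take the components to be the finest decomposition for which the triangular structure holds, i.e. refine iteratively. If $\Gamma_{\mathcal A_p}$ is not strongly connected, split $V_p$ along the components of $\Gamma_{\mathcal A_p}$ ordered topologically. The triangular property is preserved, because entries mixing in earlier blocks lie below the diagonal anyway. Iterating terminates since $n$ is finite, and it yields weakly irreducible diagonal blocks (a $1\times\cdots\times1$ block counts as weakly irreducible).

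For the spectral radius, I argue in two directions.

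\emph{Upper bound on eigenvalues.} For an eigenpair $(\lambda,x)$, let $p$ be the smallest index with $x|_{V_p}\neq0$. The equations for rows in $V_p$ only involve $x$ on $V_1\cup\dots\cup V_p$, where $x$ vanishes outside $V_p$. So $\mathcal A_p (x|_{V_p})^{k-1}=\lambda (x|_{V_p})^{[k-1]}$, and hence $|\lambda|\le\rho(\mathcal A_p)$.

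\emph{Lower bound.} Every eigenvalue of each $\mathcal A_p$ must be shown to be an eigenvalue of $\mathcal A$. For the nonnegative case it is enough to get $\rho(\mathcal A)\ge\rho(\mathcal A_p)$. This follows from Lemma \ref{Lemma3}, since $\mathcal A$ dominates the tensor equal to $\mathcal A_p$ on $V_p$ and zero elsewhere, whose spectral radius is $\rho(\mathcal A_p)$ (pad the eigenvector with zeros).

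For general complex tensors, the lower bound is the hard step. One needs that the characteristic polynomial (resultant) of a block triangular tensor factors as a product of powers of those of the diagonal blocks, which is Shao's product formula. I would cite that factorization rather than reprove it. In this paper, Lemma \ref{Lemma4} is only applied to nonnegative clique tensors, where the elementary argument above suffices.
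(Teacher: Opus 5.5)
\textbf{There is a genuine gap, in the refinement step.} Your spectral argument is sound for a genuine lower triangular block form. Restricting an eigenvector to the first block on which it is nonzero gives $|\lambda|\le\max_p\rho(\mathcal A_p)$. For nonnegative tensors, zero-padding plus Lemma~\ref{Lemma3} gives the reverse inequality.

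The gap is your claim that splitting $V_p$ along the strong components of $\Gamma_{\mathcal A_p}$ keeps the form triangular ``because entries mixing in earlier blocks lie below the diagonal anyway''. This is false. Take an entry $a_{ii_2\dots i_k}\neq0$ with $i\in V_p$, one index in an earlier block, and another index $i_j\in V_p$. It produces no arc of $\Gamma_{\mathcal A_p}$, yet it still requires the sub-block containing $i_j$ to come no later than the one containing $i$.

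A concrete counterexample: let $k=n=3$, $a_{123}=a_{213}=1$, and all other entries $0$.
\begin{itemize}
\item Vertex $3$ has no outgoing arc, so $\mathcal A$ is not weakly irreducible and $\ell\ge2$.
\item The two nonzero entries force $1$ and $2$ into one block lying after the block of $3$. So the only admissible form has blocks $\{3\},\{1,2\}$.
\item The diagonal block on $\{1,2\}$ is the zero tensor, whose digraph has no arcs, so it is not weakly irreducible.
\item Splitting it as $\{1\},\{2\}$ in either order puts $a_{123}$ or $a_{213}$ above the diagonal.
\end{itemize}
So no refinement can succeed. With diagonal blocks taken as principal subtensors and weak irreducibility defined through $\Gamma$, the first assertion of the lemma fails for general, even nonnegative, tensors. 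The paper gives no proof of it, only the citation \cite{Shao1}.

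\textbf{What is true, and all the paper needs.} The lemma is invoked only for $\mathcal A_s(G)$, so the symmetric case suffices. If $\mathcal A$ is symmetric, every index of a nonzero entry can serve as its first index. Hence $\Gamma_{\mathcal A}$ is symmetric, its strong components are its connected components, and each nonzero entry has all its indices in one component. So $\mathcal A$ permutes to a block \emph{diagonal} tensor with $\Gamma_{\mathcal A_p}=\Gamma_{\mathcal A}[V_p]$ strongly connected. Your first step already works and no refinement is needed.

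In the block diagonal case, zero-padding turns every eigenpair of $\mathcal A_p$ into an eigenpair of $\mathcal A$, for arbitrary complex entries. Together with your restriction argument, the spectrum of $\mathcal A$ is the union of the spectra of the blocks. Thus $\rho(\mathcal A)=\max_p\rho(\mathcal A_p)$, with no need for Lemma~\ref{Lemma3} or the resultant factorization. For $\mathcal A_s(G)$, the graph underlying $\Gamma_{\mathcal A_s(G)}$ is exactly $\tilde G$, so the blocks are the $s$-clique connected components. This is precisely how the paper applies the lemma.

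For nonsymmetric nonnegative tensors, what survives is a recursive splitting: split off a set with no outgoing arcs and recurse on both principal subtensors. Your two spectral arguments still yield the max formula at each split, but the resulting parts need not assemble into one global triangular block form.
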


An \textit{$s$-clique walk} of a graph $G$ is a sequence of $s$-cliques $c_s^{(1)},c_s^{(2)},\dots, c_s^{(m)}$ in which $s$-cliques $c_s^{(i)}$ and $c_s^{(i+1)}$ have at least one vertex in common for $i = 1, \dots , m -1$.
If any two vertices of a graph are connected to each other by $s$-clique walks,
then the graph is called \textit{$s$-clique connected} \cite{Liu2}.
Let $\tilde{G}$ be the subgraph of a graph $G$ obtained by deleting all edges that are not contained in any $s$-clique.
The connected components in $\tilde{G}$ are called \textit{$s$-clique connected components} of $G$.
If $G$ is not $s$-clique connected,
by Lemma \ref{Lemma4},
there exists an $s$-clique connected component $H$ of $G$ such that $\rho_s(G)=\rho_s(H)$.

\begin{lemma}\label{dangqiejindang}{\rm (\cite{Liu2})}
The $s$-clique tensor of a graph $G$ is weakly irreducible if and only if $G$ is $s$-clique connected.
\end{lemma}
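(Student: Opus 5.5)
Lemma \ref{dangqiejindang} characterises weak irreducibility of $\mathcal{A}_s(G)$ through the associated digraph $\Gamma_{\mathcal{A}_s(G)}$. I would prove it by showing that strong connectivity of this digraph, restricted to the relevant vertices, is the same as $s$-clique connectivity of $G$. Then I would invoke the criterion of \cite{Friedland1}: a nonnegative tensor is weakly irreducible if and only if its associated digraph is strongly connected.

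\textbf{Step 1: describe the arcs.} By Definition \ref{s-clique}, the entry $a_{i i_2\dots i_s}$ is nonzero exactly when $\{i,i_2,\dots,i_s\}\in C_s(G)$. Hence $(i,j)$ is an arc of $\Gamma_{\mathcal{A}_s(G)}$ if and only if $i\neq j$ and $i,j$ lie in a common $s$-clique. This relation is symmetric, so $\Gamma_{\mathcal{A}_s(G)}$ is really an undirected graph in which every arc comes with its reverse. Strong connectivity therefore reduces to ordinary connectivity of the graph $\tilde{G}$, whose edges are the pairs $\{i,j\}$ contained in some $s$-clique.

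\textbf{Step 2: sufficiency.} Assume $G$ is $s$-clique connected and take distinct vertices $u,v$. There is an $s$-clique walk $c^{(1)},\dots,c^{(m)}$ with $u\in c^{(1)}$ and $v\in c^{(m)}$. Choose $w_i\in c^{(i)}\cap c^{(i+1)}$ for each $i$. Then $u,w_1,\dots,w_{m-1},v$ is a sequence in which each consecutive pair lies in a common $s$-clique. Deleting repetitions of consecutive equal vertices turns it into a directed path from $u$ to $v$ in $\Gamma_{\mathcal{A}_s(G)}$.

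\textbf{Step 3: necessity.} Assume $\Gamma_{\mathcal{A}_s(G)}$ is strongly connected, and take a path $u=v_0,v_1,\dots,v_\ell=v$. Each arc $(v_{k-1},v_k)$ comes from an $s$-clique $c^{(k)}$ containing both endpoints. Consecutive cliques $c^{(k)}$ and $c^{(k+1)}$ share the vertex $v_k$, so these cliques form an $s$-clique walk from $u$ to $v$.

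The main obstacle is the degenerate convention: a vertex lying in no $s$-clique. Such a vertex has no outgoing arcs, so for $n\ge2$ it already breaks strong connectivity; it also cannot be joined to another vertex by an $s$-clique walk. Both sides of the equivalence therefore fail together, and this case needs an explicit remark. The trivial case $n=1$ should be handled by convention.
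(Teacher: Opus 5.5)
Your proposal is correct and is essentially the standard argument. The paper does not reprove this lemma; it quotes it from Liu and Bu and records next to it the same Friedland criterion you use. Your key observation is that $(i,j)$ is an arc of $\Gamma_{\mathcal{A}_s(G)}$ exactly when $i\neq j$ lie in a common $s$-clique, so directed paths and $s$-clique walks translate into each other, and that is precisely what the cited result rests on.

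Two cosmetic points. Removing consecutive repetitions gives a directed walk rather than a path, though any walk contains a path. Also, the degenerate case needs no separate treatment, because for $n\ge 2$ either hypothesis already forces every vertex into some $s$-clique.
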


The following result gives a sharp upper bound on the number of $s$-cliques in $G$. 
\begin{lemma}\label{lemma6}{\rm (\cite{Liu2})}
Let $G$ be a graph with $n$ vertices.
Then $$|C_s(G)| \leq \frac{n}{s}\rho_s(G).$$
Furthermore, the equality holds if the number of $s$-cliques containing $i$ is equal for all $i=1,2,\dots,n$.
\end{lemma}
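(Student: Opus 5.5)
The statement in question is Lemma \ref{lemma6}: $|C_s(G)|\le \frac{n}{s}\rho_s(G)$, with equality when every vertex lies in the same number of $s$-cliques. I will prove it with the variational characterization of Lemma \ref{Lemma1}, evaluated at the uniform test vector.

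The $s$-clique tensor $\mathcal{A}_s(G)$ is symmetric and nonnegative of order $k=s$. So Lemma \ref{Lemma1} gives
\[
\rho_s(G)\ \ge\ x^{\top}\mathcal{A}_s(G)x^{s-1}
\]
for every nonnegative $x$ with $\sum_i x_i^s=1$. I take $x=n^{-1/s}(1,\dots,1)^{\top}$.

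Next I compute $x^{\top}\mathcal{A}_s(G)x^{s-1}=\sum_{i_1,\dots,i_s}a_{i_1\dots i_s}x_{i_1}\cdots x_{i_s}$. Each $s$-clique $\{v_1,\dots,v_s\}$ appears as $s!$ ordered index tuples, and each tuple carries the entry $\frac{1}{(s-1)!}$. Tuples with repeated indices contribute nothing. Hence
\[
x^{\top}\mathcal{A}_s(G)x^{s-1}=\sum_{c\in C_s(G)} \frac{s!}{(s-1)!}\prod_{v\in c}x_v = s\sum_{c\in C_s(G)}\prod_{v\in c}x_v .
\]
With the uniform vector, each product equals $1/n$. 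This gives $\rho_s(G)\ge \frac{s}{n}|C_s(G)|$, which is the inequality.

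For the equality case, suppose each vertex $i$ lies in exactly $d$ $s$-cliques. Let $\mathbf{1}$ be the all-ones vector. Then
\[
(\mathcal{A}_s(G)\mathbf{1}^{s-1})_i=\frac{1}{(s-1)!}\cdot (s-1)!\cdot d=d ,
\]
since each clique containing $i$ gives $(s-1)!$ orderings of the remaining indices. So $\mathbf{1}$ is a positive eigenvector with eigenvalue $d$.

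I then need $d=\rho_s(G)$. If $G$ is $s$-clique connected, the tensor is weakly irreducible by Lemma \ref{dangqiejindang}, and Lemma \ref{Lemma2}(2) gives $d=\rho_s(G)$. In general, I use the block decomposition of Lemma \ref{Lemma4}. Each $s$-clique connected component is again $d$-regular in cliques, so each diagonal block has spectral radius $d$. Isolated vertices, i.e. those in no clique, give $d=0$ and the trivial case. Thus $\rho_s(G)=d$.

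Double counting gives $s|C_s(G)|=nd=n\rho_s(G)$, which is equality.

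The main obstacle is this last identification of $d$ with $\rho_s(G)$ when $G$ is not $s$-clique connected. It is handled by applying Lemma \ref{Lemma2}(2) to each weakly irreducible block and then taking the maximum over blocks as in Lemma \ref{Lemma4}.
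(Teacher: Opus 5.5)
Your proof is correct. The paper gives no proof of this lemma; it is imported from Liu and Bu \cite{Liu2}, so there is no in-text argument to compare against.

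Your uniform test vector $x=n^{-1/s}\mathbf{1}$ in Lemma \ref{Lemma1} is the natural way to get $\rho_s(G)\ge \frac{s}{n}|C_s(G)|$, and your count of the $s!$ orderings per clique is right.

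For the equality case you use Perron--Frobenius theory (Lemmas \ref{dangqiejindang}, \ref{Lemma2}(2), \ref{Lemma4}). The one step you leave implicit is that the weakly irreducible diagonal blocks in Lemma \ref{Lemma4} are exactly the clique tensors of the $s$-clique connected components. This holds because, for a symmetric tensor, a lower triangular block form is automatically block diagonal: move an index from the lowest-numbered block to the first position. Hence the blocks are the connected components of $\Gamma_{\mathcal{A}}$, i.e.\ of $\tilde G$. The paper relies on the same identification right after Lemma \ref{Lemma4}.

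A lighter route avoids irreducibility entirely. Let $d_v$ be the number of $s$-cliques containing $v$. By AM--GM, for every nonnegative $x$ with $\sum_v x_v^s=1$,
\[
x^{\top}\mathcal{A}_s(G)x^{s-1}=s\sum_{c\in C_s(G)}\prod_{v\in c}x_v\le \sum_{c\in C_s(G)}\sum_{v\in c}x_v^s=\sum_{v}d_vx_v^s\le \max_v d_v .
\]
Lemma \ref{Lemma1} then gives $\frac1n\sum_v d_v=\frac{s}{n}|C_s(G)|\le\rho_s(G)\le\max_v d_v$, which collapses to equality when all $d_v$ are equal. This uses only Lemma \ref{Lemma1}, treats disconnected $G$ and the case $d=0$ uniformly, and yields the general bound $\rho_s(G)\le\max_v d_v$ as a by-product. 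Your argument instead shows that $\mathbf{1}$ is the Perron vector, which is extra information but costs the component-by-component bookkeeping.
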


Liu and Bu \cite{Liu3} established the following spectral analogue of the Erd\H{o}s-Simonovits stability theorem for $s$-clique tensors.
\begin{lemma}\label{liuESS}{\rm (\cite{Liu3})}
Let $H$ be a graph with $\chi(H) = r+1>s\geq2$.
For every $\epsilon > 0$,
there exist $\delta > 0$ and $n_0$ such that if $G$ is an $H$-free graph on $n\geq n_0$ vertices and $$\rho_s(G)\geq \left({r-1\choose s-1}\left(\frac{1}{r}\right)^{s-1}-\delta\right)n^{s-1},$$
then $G$ can be obtained from $T_r(n)$ by adding and deleting at most $\epsilon n^2$ edges.
\end{lemma}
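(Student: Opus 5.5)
The plan is to reduce the spectral statement to a known counting-stability statement for $K_{r+1}$-free graphs, in three steps.

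\textbf{Step 1: pass from $H$-free to $K_{r+1}$-free.} Since $\chi(H)=r+1$, $H$ is a subgraph of the blow-up $K_{r+1}(h)$ with $h=|V(H)|$. By the Erd\H{o}s supersaturation argument for hypergraphs, an $H$-free graph $G$ contains only $o(n^{r+1})$ copies of $K_{r+1}$. The graph removal lemma then lets us delete a set $E_0$ of $o(n^2)$ edges so that $G'=G-E_0$ is $K_{r+1}$-free.

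To control the spectral loss, take a nonnegative eigenvector $x$ of $\mathcal{A}_s(G)$ with $\sum x_i^s=1$ (Lemma \ref{Lemma2}). Write $d_s(i)$ for the number of $s$-cliques containing $i$. The eigen-equation gives
\[
\rho_s(G)x_i^{s-1}=\sum_{\{i,i_2,\dots,i_s\}\in C_s(G)}x_{i_2}\cdots x_{i_s}.
\]
Applying H\"older's inequality to the right-hand side together with $\rho_s(G)=\Theta(n^{s-1})$ should give $x_i=O(n^{-1/s})$ for every $i$. Each deleted edge lies in at most $n^{s-2}$ $s$-cliques, each contributing $O(n^{-1})$ to $x^{\top}\mathcal{A}_s x^{s-1}$. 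Hence, by Lemma \ref{Lemma1},
\[
\rho_s(G')\ge\rho_s(G)-o(n^{s-1})\ge\Bigl({r-1\choose s-1}r^{1-s}-2\delta\Bigr)n^{s-1}.
\]

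\textbf{Step 2: convert the spectral bound into a clique-count bound for $G'$.} Lemma \ref{lemma6} only gives $|C_s|\le\frac ns\rho_s$, which points the wrong way, so near-uniformity of $x$ is needed. Let $x$ now be the Perron vector of the $s$-clique component of $G'$ attaining $\rho_s(G')$. The key inequality is a Motzkin--Straus/Lagrangian-type bound. For any $K_{r+1}$-free graph and any weights $y\ge 0$, the clique polynomial $\sum_{C_s}\prod_{i\in C}y_i$ is at most ${r\choose s}\bigl(\frac{\sum y_i}{r}\bigr)^s$; this is the Zykov symmetrization argument.

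Applying it with $y_i=x_i$ and the power-mean inequality $\sum x_i\le n^{(s-1)/s}$ yields
\[
\rho_s(G')\le s{r\choose s}r^{-s}n^{s-1}={r-1\choose s-1}r^{1-s}n^{s-1}.
\]
So the hypothesis is close to equality in both the power-mean inequality and the Lagrangian bound. Near-equality in the power-mean inequality forces all but $o(n)$ coordinates to satisfy $x_i=(1+o(1))n^{-1/s}$. Deleting the $o(n)$ exceptional vertices costs $o(n^2)$ edges. On the remaining set the clique sum is essentially $n^{-1}|C_s|$, so $|C_s(G'')|\ge\bigl({r\choose s}r^{-s}-\delta'\bigr)n^s$.

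\textbf{Step 3: apply counting stability.} The generalized Tur\'an stability theorem states that a $K_{r+1}$-free graph with at least ${\rm ex}(n,K_s,K_{r+1})-\delta' n^s$ copies of $K_s$ is $\epsilon n^2/2$-close to $T_r(n)$. This can be proved from Zykov symmetrization or cited from Ma--Qiu. Adding back the deleted edges and vertices changes at most $o(n^2)+o(n)\cdot n$ edges, which completes the proof.

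The main obstacle is Step 2: extracting near-uniformity of the Perron vector from near-equality in the chain of inequalities. I would first try a quantitative version of the power-mean inequality, namely that $\sum x_i\ge(1-\eta)n^{(s-1)/s}$ with $\sum x_i^s=1$ forces $x$ to be $L^s$-close to the constant vector. I would combine it with the uniform upper bound $x_i=O(n^{-1/s})$ from Step 1 to show that low-weight vertices contribute negligibly.
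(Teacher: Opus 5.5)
The paper gives no proof of this lemma. It is quoted from \cite{Liu3} and used as a black box right after Lemma~\ref{311}, so there is no in-text argument to compare yours with. Judged on its own, your reduction is correct.

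Step~1 is sound, since $\rho_s(G)x_i^{s-1}\le\frac{1}{(s-1)!}(\sum_j x_j)^{s-1}\le\frac{1}{(s-1)!}n^{(s-1)^2/s}$ gives $x_i=O(n^{-1/s})$. It is also genuinely needed: $H$-free graphs can have clique number above $r$ (e.g.\ $K_{t-1}+T_r(n-t+1)$ is $tK^+(n_1,\dots,n_r)$-free with clique number $r+t-1$). In that case the Motzkin--Straus-type (S\'os--Straus) bound $p_s(y)=\sum_{C\in C_s}\prod_{i\in C}y_i\le\binom{r}{s}(\sum_i y_i/r)^s$ fails.

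The step you flag as the main obstacle closes quickly. With $y_i=n^{1/s}x_i$ you have $\sum_i y_i^s=n$ and $\sum_i y_i\ge(1-\eta)n$ with $\eta=O(\delta)$. The tangent-line inequality $y^s\ge 1+s(y-1)$ for $y\ge 0$ then gives $\sum_i (y_i^s-1-s(y_i-1))\le s\eta n$. So only $O_\tau(\eta)\,n$ indices have $y_i\ge 1+\tau$.

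Re-run the H\"older bound for the eigenvector of $G'$, which is valid because $\rho_s(G')=\Theta(n^{s-1})$. The $s$-cliques meeting those exceptional indices then contribute $O_\tau(\eta)\,n^{s-1}$ to $p_s(x)$, and every other clique contributes at most $(1+\tau)^s/n$. Hence $|C_s(G')|\ge(\binom{r}{s}r^{-s}-\delta')n^s$ directly, and no vertex deletion is needed before invoking generalized Tur\'an stability. That last step can be cited from Ma--Qiu. Alternatively, use the Khad\v{z}iivanov--Nikiforov clique inequality to force $e(G')\ge(1-\delta'')e(T_r(n))$, and then apply classical Erd\H{o}s--Simonovits stability.

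Compared with citing \cite{Liu3}, your route makes the mechanism explicit. The only spectral input is the sandwich $\rho_s(G')=s\,p_s(x)\le s\binom{r}{s}r^{-s}(\sum_i x_i)^s\le\binom{r-1}{s-1}r^{1-s}n^{s-1}$. The price is the removal lemma, which makes $\delta$ ineffective; that is harmless because the statement is purely existential.
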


Let $\mu(G)$ and $\Delta(G)$ be the matching number and maximum degree of $G$,
respectively.
For two integers $\mu$ and $\Delta$,
let $f(\mu,\Delta)=\max\{e(G) \ |\ \mu(G)\leq \mu,\Delta(G)\leq\Delta\}$.
In 1976,
Chv\'{a}tal-Hanson obtained the following result.
\begin{lemma}{\rm (\cite{Hanson})}\label{Hanson}
For any two integers $\mu\geq 1$ and $\Delta\geq 1$,
we have $$f(\mu,\Delta)=\Delta\mu+\left\lfloor\frac{\Delta}{2}\right\rfloor\left\lfloor\frac{\mu}{\lceil\Delta/2\rceil}\right\rfloor\leq \mu(\Delta+1).$$
\end{lemma}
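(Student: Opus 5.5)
The statement immediately above is Lemma \ref{Hanson}, the Chv\'{a}tal--Hanson bound. Its exact formula is a classical result that we cite. What is actually used later is the inequality $f(\mu,\Delta)\leq \mu(\Delta+1)$. I would prove this inequality directly rather than reconstruct the exact extremal count.

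Let $G$ have $\mu(G)\leq \mu$ and $\Delta(G)\leq \Delta$. Take a maximum matching $M$ and let $V(M)$ be its vertex set, with $|V(M)|=2|M|\leq 2\mu$. By maximality, $V(G)\setminus V(M)$ is independent, so every edge meets $V(M)$. Counting edges through their endpoints in $V(M)$ gives the crude bound $e(G)\leq 2\mu\Delta$. This is too weak, so the step that needs an idea is the improvement by a factor of about two.

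For that I would use the Gallai--Edmonds structure theorem, or equivalently the Tutte--Berge formula. There is a set $S$ with
\[
\mu(G)=\tfrac12\bigl(n-\mathrm{odd}(G-S)+|S|\bigr),
\]
where the components $D_1,\dots,D_m$ of $G-S$ are such that the odd ones are factor-critical (this is the Gallai--Edmonds choice of $S$). Every edge either meets $S$ or lies inside some component. The edges meeting $S$ number at most $|S|\Delta$. A component $D_j$ on $2k_j+1$ vertices (or $2k_j$ vertices, if even) contributes $k_j$ to the matching. It has at most $\binom{2k_j+1}{2}=k_j(2k_j+1)$ edges, and also at most $(2k_j+1)\Delta/2$ edges. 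Its edge count is therefore at most
\[
\min\{k_j(2k_j+1),\,(2k_j+1)\Delta/2\}.
\]
If $2k_j+1\leq\Delta+1$, the first term is at most $k_j(\Delta+1)$. Otherwise $2k_j+1\leq 2k_j\cdot\frac{\Delta+1}{\Delta}$, so the second term is again at most $k_j(\Delta+1)$; even components are handled the same way. Since $\mu=|S|+\sum_j k_j$, summing gives
\[
e(G)\leq |S|\Delta+(\Delta+1)\sum_j k_j\leq \mu(\Delta+1).
\]

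The main obstacle is bookkeeping: the contribution of each component to $\mu$ must be $\lfloor |D_j|/2\rfloor$ exactly, and this is what the Gallai--Edmonds decomposition guarantees. For the exact formula we defer to \cite{Hanson}.
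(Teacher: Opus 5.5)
Your argument is correct, but it does not follow the paper, because the paper gives no proof of Lemma~\ref{Hanson}. It imports the exact Chv\'{a}tal--Hanson value from \cite{Hanson}. The inequality then follows in one line: $\lfloor\Delta/2\rfloor\le\lceil\Delta/2\rceil$ gives $\lfloor\Delta/2\rfloor\lfloor\mu/\lceil\Delta/2\rceil\rfloor\le\mu$.

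You instead prove only the bound $e(G)\le\mu(\Delta+1)$, self-contained, from the Tutte--Berge formula. Edges meeting the barrier $S$ number at most $|S|\Delta$. A component on $d$ vertices has at most $\min\{\binom{d}{2},d\Delta/2\}\le\lfloor d/2\rfloor(\Delta+1)$ edges, and your case split verifies this correctly.

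\begin{itemize}
\item \textbf{Weaker tool suffices.} Gallai--Edmonds and factor-criticality are not needed. For any $S$ attaining the Tutte--Berge minimum, $\mu(G)=\frac12(n+|S|-\mathrm{odd}(G-S))=|S|+\sum_j\lfloor|D_j|/2\rfloor$. This is pure arithmetic from $n=|S|+\sum_j|D_j|$, so the bookkeeping you flag as the main obstacle is automatic.
\item \textbf{Trade-off.} Your route does not depend on the more delicate exact extremal count. The price is that it does not establish the equality, which you, like the paper, take from \cite{Hanson}.
\item \textbf{What the application needs.} The lemma's only use is in Lemma~\ref{313}, and there even less is required: only $e(H)=O(1)$. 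Your crude bound $e(G)\le 2\mu\Delta$, from a maximum matching, would already suffice for that step.
\end{itemize}
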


\begin{lemma}{\rm (\cite{Cioaba})}\label{Cioaba}
For $n$ finite sets $V_1,\dots,V_n$,
we have $|V_1\cap\dots\cap V_n|\geq \sum_{i=1}^n|V_i|-(n-1)|\bigcup_{i=1}^nV_i|$.
\end{lemma}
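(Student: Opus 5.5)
The inequality is a one-step double count over the union, and I would argue by counting incidences between elements and sets rather than by induction on $n$. Write $U=\bigcup_{i=1}^nV_i$ and $I=\bigcap_{i=1}^nV_i$. For each $x\in U$, let $c(x)$ be the number of indices $i$ with $x\in V_i$. Counting the pairs $(x,i)$ with $x\in V_i$ in two ways gives
$$\sum_{i=1}^n|V_i|=\sum_{x\in U}c(x).$$

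Next I bound each $c(x)$. If $x\in I$, then $c(x)=n$. If $x\in U\setminus I$, then $x$ misses at least one of the sets, so $c(x)\le n-1$. Splitting the sum accordingly,
$$\sum_{i=1}^n|V_i|\le n|I|+(n-1)\bigl(|U|-|I|\bigr)=|I|+(n-1)|U|.$$
Rearranging gives $|I|\ge\sum_{i=1}^n|V_i|-(n-1)|U|$, which is the claim.

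No step is a real obstacle. The only point to check is that every element of $U$ is counted in exactly one of the two cases, $x\in I$ or $x\in U\setminus I$. Elements outside $U$ contribute nothing to either side, so they need no treatment.
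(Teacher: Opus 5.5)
Your proof is correct and complete. The paper gives no proof of this lemma; it quotes it from \cite{Cioaba} and uses it only as a tool, so there is no argument in the paper to compare yours against.

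The count $\sum_{i}|V_i|=\sum_{x\in U}c(x)\le n|I|+(n-1)(|U|-|I|)$ rearranges exactly to the claim, and the trivial case $n=1$ is covered as well.

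The other common route is induction on $n$. One applies $|A\cap B|=|A|+|B|-|A\cup B|\ge |A|+|B|-|U|$ with $A=V_1\cap\dots\cap V_{n-1}$ and $B=V_n$, together with $\bigcup_{i<n}V_i\subseteq U$.

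Your incidence count avoids the induction and gives a little more. It shows that equality holds exactly when every element of $U\setminus I$ lies in $n-1$ of the sets. It also shows directly that $U$ may be replaced by any finite superset, since the coefficient $-(n-1)$ is nonpositive. That weaker form is the one the paper actually uses, when it bounds the union of neighbourhoods inside a part $V_j$ by $|V_j|$ (or by $\frac{n}{r}+2\xi^2 n$).
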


The following lemma states that, for $K_{t-1}+K(n_1,n_2,\ldots,n_r)$, the
$s$-clique spectral radius is increased by balancing the part sizes of its
complete $r$-partite part.

\begin{lemma}{\rm (\cite{Xu})}\label{Xu}
Let $2\leq s\leq r$.
For a complete $r$-partite graph $K(n_1,n_2,\dots,n_r)$, 
if there exist $i$ and $j$ with $n_i-n_j\geq 2$,
then $$\rho_s\big(K_{t-1}+K(n_1,\dots,n_i-1,\dots,n_j+1,\dots,n_r)\big)>\rho_s\big(K_{t-1}+K(n_1,\dots,n_i,\dots,n_j,\dots,n_r)\big).$$
\end{lemma}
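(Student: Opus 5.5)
The plan is to compare Rayleigh quotients (Lemma \ref{Lemma1}) for $G=K_{t-1}+K(n_1,\dots,n_r)$ and for the graph $G'$ obtained by moving one vertex $v$ from part $V_i$ to part $V_j$, using the Perron vector of $G$ as a test vector for $G'$. Both graphs are $s$-clique connected because $s\le r$. So by Lemma \ref{dangqiejindang} and Lemma \ref{Lemma2}, $\mathcal{A}_s(G)$ has a unique positive eigenvector $x$ with $\sum x_u^s=1$ for $\rho=\rho_s(G)$. By uniqueness and the automorphisms of $G$, $x$ is constant on each part $V_k$, with value $a_k$, and constant on the $K_{t-1}$ vertices. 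For any graph $H$ on the same vertex set, $x^{\top}\mathcal{A}_s(H)x^{s-1}=s\sum_{c\in C_s(H)}\prod_{u\in c}x_u$.

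\textbf{Step 1: the change in the form.} Only $s$-cliques through $v$ change between $G$ and $G'$. Let $T$ be the weight (sum of products of $x$-entries) of $(s-1)$-cliques of $G-v$ avoiding $V_i\cup V_j$. Let $W$ be the weight of $(s-2)$-cliques avoiding $V_i\cup V_j$, with $W=1$ when $s=2$. Then the cliques through $v$ have weight $x_v(T+n_ja_jW)$ in $G$ and $x_v(T+(n_i-1)a_iW)$ in $G'$. Hence
\[
x^{\top}\mathcal{A}_s(G')x^{s-1}-\rho = s\,a_i W\big((n_i-1)a_i-n_ja_j\big),
\]
and it suffices to show $(n_i-1)a_i\ge n_ja_j$.

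\textbf{Step 2: comparing $a_i$ and $a_j$.} The eigen-equations at a vertex of $V_i$ and at a vertex of $V_j$ read $\rho a_i^{s-1}=T+n_ja_jW$ and $\rho a_j^{s-1}=T+n_ia_iW$, where here $T$ and $W$ are computed in the whole of $G$. (The cross terms genuinely split this way since $x$ is constant on parts. I would recheck that the $T,W$ of Step 1, computed in $G-v$, only help the inequality; alternatively I would redo Step 1 with the exact counts.) If $a_i\ge a_j$, then $(n_i-1)a_i\ge (n_j+1)a_j>n_ja_j$ and we are done. Otherwise $a_i<a_j$, and subtracting the two equations gives
\[
\rho\,(a_j^{s-1}-a_i^{s-1})=W(n_ia_i-n_ja_j)=W(n_i-n_j)a_i-Wn_j(a_j-a_i).
\]
Using $a_j^{s-1}-a_i^{s-1}\ge (s-1)a_i^{s-2}(a_j-a_i)$, this yields
\[
(a_j-a_i)\big((s-1)\rho a_i^{s-2}+Wn_j\big)\le W(n_i-n_j)a_i .
\]
The first eigen-equation gives $\rho a_i^{s-1}\ge n_ja_jW\ge n_ja_iW$, that is, $\rho a_i^{s-2}\ge n_jW$. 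Combining this with $n_i-n_j-1\ge 1$ and $s-1\ge1$ gives $n_j(a_j-a_i)\le (n_i-n_j-1)a_i$, which is exactly $(n_i-1)a_i\ge n_ja_j$.

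\textbf{Step 3: strictness.} Lemma \ref{Lemma1} now gives $\rho_s(G')\ge\rho$. Suppose equality held. Then $x$ would maximize the Rayleigh quotient for the weakly irreducible $\mathcal{A}_s(G')$, so by Lemma \ref{Lemma2} it would be its positive eigenvector. But in $G'$ the vertex $v$ and the vertices of $V_j$ are now twins while $x_v=a_i\ne a_j$ (in the case $a_i<a_j$). In the case $a_i=a_j$, one checks the eigen-equation at a vertex of $V_i\setminus\{v\}$ instead: its clique sum strictly changes by $a_iW$ while the eigen-equation would demand it stay the same. Either way we get a contradiction, so the inequality is strict.

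The main obstacle I expect is Step 2: bounding $a_j-a_i$ finely enough to absorb the loss of the moved vertex $v$ itself. This is where the hypothesis $n_i-n_j\ge2$ is used, together with the crude eigen-equation bound $\rho a_i^{s-2}\ge n_jW$.
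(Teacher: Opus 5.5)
Your proof is correct. The paper itself does not prove this lemma; it quotes it from \cite{Xu}, so there is no in-paper argument to compare with. Your argument is a valid self-contained proof: you take the part-constant Perron vector of $G=K_{t-1}+K(n_1,\dots,n_r)$ and use it as a test vector for the graph $G'$ obtained by moving one vertex $v$ from $V_i$ to $V_j$.

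A few points can be tightened.

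\begin{itemize}
\item The hesitation in Step 2 is unnecessary. A clique avoiding $V_i\cup V_j$ automatically avoids $v$, so $T$ and $W$ are the same numbers in Step 1 and in both eigen-equations. They are clique weights in $G-(V_i\cup V_j)=G'-(V_i\cup V_j)$.
\item The case $a_i\ge a_j$ is vacuous. In that case the right-hand side of $\rho(a_j^{s-1}-a_i^{s-1})=W(n_ia_i-n_ja_j)$ is at least $W(n_i-n_j)a_j>0$, while the left-hand side is nonpositive.
\item Step 3 can be dropped. Because $a_j>a_i$ and $n_j\ge 1$, you have $\rho a_i^{s-1}=T+n_ja_jW>n_ja_iW$, so $\rho a_i^{s-2}>n_jW$ strictly. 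This strictness carries through to $n_j(a_j-a_i)<\frac{n_i-n_j}{s}a_i\le (n_i-n_j-1)a_i$, i.e.\ $(n_i-1)a_i>n_ja_j$. That already gives $x^{\top}\mathcal{A}_s(G')x^{s-1}>\rho$, hence $\rho_s(G')>\rho_s(G)$ by Lemma \ref{Lemma1}.
\item If you keep Step 3, its key claim needs a different justification. That a positive maximizer in Lemma \ref{Lemma1} is an eigenvector of $\mathcal{A}_s(G')$ follows from the Lagrange condition at an interior point of $\mathbb{R}^n_+$, not from Lemma \ref{Lemma2}.
\item Also in Step 3, once you have the eigenvector property, the check at a vertex of $V_i\setminus\{v\}$ works in every case. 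Its clique sum grows by $a_iW>0$ while $x$ is unchanged. So the split into $a_i<a_j$ and $a_i=a_j$ is not needed.
\end{itemize}
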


\section{The proofs of Theorems \ref{theorem1} and \ref{theorem2} }
In this section,
we give the proofs of Theorems \ref{theorem1} and \ref{theorem2}.
First of all,
for sufficiently large $n$,
we assume that $G^*$ is a graph  attaining the maximum $s$-clique spectral radius among all $tK^+(n_1,n_2,\dots,n_r)$-free graphs on $n$ vertices where $t\geq2$, 
$2\leq s\leq r$ and let $\sum_{i=1}^rn_i=h$.
Our aim is to characterize the structure of $G^*$ and show that $G^*$ is $K_{t-1}+T_r(n-t+1)$. 

The proof of Theorem \ref{theorem1} is outlined as follows.
\begin{itemize}
\item We give a lower bound on $\rho_s(G^*)$,
see Lemma \ref{311}.
By Lemmas \ref{liuESS}, \ref{311} and \ref{lei},
we obtain an $r$-partition $V(G^*)=V_1\cup \dots\cup V_r$ such that $\frac{n}{r}-2\xi^2n<|V_i|<\frac{n}{r}+2\xi^2n$ for each $i\in \{1,2,
\dots,r\}$.

\item We show that any edge in $G^*[V_i\setminus (L\cup W\cup S)]$ and any vertices $w\in W$ can be extended to a copy of $K^+(n_1,n_2,\dots,n_r)$,
see Lemmas \ref{344} and \ref{3112}.

\item We show some local structural properties of $G^*$,
see Lemmas \ref{377}, \ref{388}, \ref{3991}, \ref{312} and \ref{355}. 
And show that $G^*$ is $tK_{r+1}$-free,
see Lemma \ref{3221}.

\item After we prove that $G^*$ is $tK_{r+1}$-free,
we show that $G^*$ is $s$-clique connected,
see Lemma \ref{322} and give the exact characterization of $W$,
see Lemmas \ref{313} and \ref{314}.
Finally we show that $G^*$ is $K_{t-1}+T_r(n-t+1)$.
\end{itemize}

\begin{lemma}\label{311}
$\rho_s(G^*)\geq {r-1 \choose s-1}\left(\frac{n}{r}\right)^{s-1}+cn^{s-2}$,
where $c$ is a positive constant.
\end{lemma}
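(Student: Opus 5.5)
Since $G^*$ is extremal, we have $\rho_s(G^*)\geq \rho_s(K_{t-1}+T_r(n-t+1))$. The graph $K_{t-1}+T_r(n-t+1)$ is $tK^+(n_1,\dots,n_r)$-free for large $n$: every copy of $K^+(n_1,\dots,n_r)$ has chromatic number $r+1$, so it must use at least one vertex of $K_{t-1}$, since $T_r(n-t+1)$ is $r$-colorable. Thus $t$ disjoint copies would need $t$ distinct vertices of $K_{t-1}$, which is impossible. So it suffices to bound $\rho_s(K_{t-1}+T_r(n-t+1))$ from below.

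I would use Lemma \ref{Lemma1} with a concrete test vector. Take $x$ uniform: $x_i=n^{-1/s}$ for all $i$. Then $x^{\top}\mathcal{A}_sx^{s-1}= s\,|C_s(G)|/n$, which is just the inequality in Lemma \ref{lemma6}. So I need to count $s$-cliques in $H=K_{t-1}+T_r(n-t+1)$. Write $m=n-t+1$ and let the parts have sizes $\approx m/r$. The cliques using no vertex of $K_{t-1}$ number $e_s(T_r(m))={r\choose s}(m/r)^s+O(m^{s-2})$. The cliques using exactly one vertex of $K_{t-1}$ number $(t-1)\,e_{s-1}(T_r(m))=(t-1){r\choose s-1}(m/r)^{s-1}+O(n^{s-2})$. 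The remaining cliques contribute $O(n^{s-2})$.

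Next expand $m=n-(t-1)$:
\[
{r\choose s}\Big(\tfrac{m}{r}\Big)^s={r\choose s}\Big(\tfrac{n}{r}\Big)^s-\tfrac{s(t-1)}{r}{r\choose s}\Big(\tfrac{n}{r}\Big)^{s-1}+O(n^{s-2}).
\]
Multiplying by $s/n$ gives the main term ${r\choose s}\frac{s}{r}(n/r)^{s-1}={r-1\choose s-1}(n/r)^{s-1}$. The $n^{s-2}$ coefficient has two pieces:
\[
\frac{s(t-1)}{r^{s-1}}\Big[{r\choose s-1}-\tfrac{s}{r}{r\choose s}\Big]
=\frac{s(t-1)}{r^{s-1}}\Big[{r\choose s-1}-{r-1\choose s-1}\Big]
=\frac{s(t-1)}{r^{s-1}}{r-1\choose s-2}>0,
\]
because $s\ge2$ and $t\ge2$. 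So the first-order terms already yield a positive constant.

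The main obstacle is the lower-order error: the floor effects from unequal part sizes in $T_r(m)$, and the $O(n^{s-3})$ terms, which after multiplying by $s/n$ are $O(n^{s-3})$ and harmless. The floors are the real concern. Since part sizes differ by at most one, the deviation of $e_s(T_r(m))$ from ${r\choose s}(m/r)^s$ is $O(m^{s-2})$, because the first-order deviation cancels when $\sum(n_i-m/r)=0$. After dividing by $n$ this is $O(n^{s-3})$. Likewise $e_{s-1}$ errs by $O(n^{s-3})$. For $s=2$ I would check this directly, since then the error terms are constants and divided by $n$ give $o(1)$.

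Hence $\rho_s(G^*)\ge {r-1\choose s-1}(n/r)^{s-1}+cn^{s-2}$ for any $0<c<\frac{s(t-1)}{r^{s-1}}{r-1\choose s-2}$ and $n$ large.
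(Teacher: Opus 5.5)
Your proposal is correct and is essentially the paper's proof. Both compare with $K_{t-1}+T_r(n-t+1)$, lower-bound its $s$-clique spectral radius by $\frac{s}{n}|C_s|$ (your uniform test vector is exactly Lemma \ref{lemma6}), and count cliques to obtain the positive $n^{s-2}$ coefficient $\frac{s(t-1)}{r^{s-1}}\binom{r-1}{s-2}$. You also spell out two points the paper leaves implicit: why the graph is $tK^+(n_1,\dots,n_r)$-free, and why the Tur\'an part-size error is only $O(m^{s-2})$ (because $\sum_i (n_i - m/r)=0$).
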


\begin{proof}
Since $G^*$ has the maximum $s$-clique spectral radius and, 
clearly, 
$K_{t-1}+T_r(n-t+1)$ is $tK^+(n_1,n_2,\dots,n_r)$-free,
we have 
\begin{equation}\label{0}
\rho_s(G^*)\geq \rho_s(K_{t-1}+T_r(n-t+1)).
\end{equation}
By Lemma \ref{lemma6}, 
we have $\rho_s(K_{t-1}+T_r(n-t+1))\geq \frac{s}{n}|C_s(K_{t-1}+T_r(n-t+1))|$.
It remains to estimate the number of $s$-cliques in $K_{t-1}+T_r(n-t+1)$.
Since an $s$-clique contains either no vertex of $K_{t-1}$,
one vertex of $K_{t-1}$, or at least two vertices of $K_{t-1}$,
we have
\begin{align*}
|C_s\big(K_{t-1}+T_r(n-t+1)\big)|
&= |C_{s}(T_r(n-t+1))|+(t-1)|C_{s-1}(T_r(n-t+1))|+O(n^{s-2}).
\end{align*}
Since $|C_{s}(T_r(n-t+1))|=\binom{r}{s} \left( \frac{n - t + 1}{r} \right)^s + O(n^{s-2})=\binom{r}{s} \left( \frac{n}{r} \right)^s - (t-1) \binom{r-1}{s-1} \left( \frac{n}{r} \right)^{s-1} + O(n^{s-2})$ and $|C_{s-1}(T_r(n-t+1))|=\binom{r}{s-1} \left( \frac{n}{r} \right)^{s-1} + O(n^{s-2})$,
we have 
\begin{align*}
|C_s\big(K_{t-1}+T_r(n-t+1)\big)|&=\binom{r}{s} \left( \frac{n}{r} \right)^s + (t-1) \binom{r-1}{s-2} \left( \frac{n}{r} \right)^{s-1} + O(n^{s-2}).
\end{align*}
Hence,
for sufficiently large $n$,
there exists a  positive constant $c$ such that 
$$\frac{s}{n}|C_s(K_{t-1}+ T_r(n-t+1))|\ge\binom{r-1}{s-1}\left(\frac nr\right)^{s-1}+c n^{s-2}.$$
Therefore,
by Eq. (\ref{0}), we have
$\rho_s(G^*)
\geq{r-1\choose s-1}\left(\frac{n}{r}\right)^{s-1}+cn^{s-2}$.
\end{proof}

Since $\chi(tK^+(n_1,n_2,\dots,n_r))=r+1$,
by Lemmas \ref{liuESS} and \ref{311},
for every $\epsilon>0$,
$G^*$ can be obtained from $T_r(n)$ by adding and deleting at most $\epsilon n^2$ edges.
For disjoint $U,V\subseteq V(G)$, 
let $E(U,V)$ denote the set of edges of $G$ between $U$ and $V$,
and $e(U,V)=|E(U,V)|$,
$\overline e(U,V)=|U||V|-e(U,V).$
For $U\subseteq V(G)$ and $v\in V(G)$, 
let $G[U]$ denote the subgraph of $G$ induced by $U$
and $d_U(v)=|N(v)\cap U|$.
For a $tK^+(n_1,n_2,\dots,n_r)$-free graph $G$ on $n$ vertices,
Lei and Li \cite{Lei} proved the following results.
\begin{lemma}{\rm  (\cite{Lei})}\label{lei}
If $G$ is a $tK^+(n_1,n_2,\dots,n_r)$-free graph obtained from $T_r(n)$ by adding and deleting at most $\epsilon n^2$ edges,
where $\epsilon<\frac{1}{64r^6h^2}$,
then there exists a partition $V(G)=V_1\cup \dots \cup V_r$ such that $\sum_{i=1}^re(G[V_i])\leq \epsilon n^2$ and $\sum_{1\leq i<j\leq r}e(V_i,V_j)$ attains the maximum.
Moreover,
the following results hold:
\begin{enumerate}[label=(\arabic*)]
\item\label{item:first} For $1\leq i\leq r$,
$\frac{n}{r}-2\sqrt{\epsilon} n<|V_i|<\frac{n}{r}+2\sqrt{\epsilon} n$.
\item\label{item:second} Let $L_0=\{v\in V(G)\ |\ d_G(v)\leq\left(1-\frac{1}{r}-6\sqrt{\epsilon}\right)n\}$.
Then $|L_0|\leq \sqrt{\epsilon} n$.
\item\label{item:third} Let $W_0=\bigcup_{i=1}^r\{v\in V_i\ |\ d_{V_i}(v)\geq \frac{2r}{r-1}\sqrt{\epsilon} n\}$.
Then $|W_0|\leq \frac{r-1}{r}\sqrt{\epsilon} n$.
\end{enumerate}
\end{lemma}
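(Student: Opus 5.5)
The plan is the standard stability-type argument of Lei and Li, using only the edge-count hypothesis on $G$ and the $tK^+(n_1,\dots,n_r)$-freeness. Fix a partition $V(G)=V_1\cup\dots\cup V_r$ maximizing $\sum_{i<j}e(V_i,V_j)$. Since $G$ differs from $T_r(n)$ by at most $\epsilon n^2$ edges, the partition inherited from $T_r(n)$ already has $\sum_i e(G[V_i])\le\epsilon n^2$. A max-cut partition has at least as many crossing edges, so it inherits the same bound. It also satisfies $\sum_{i<j}e(V_i,V_j)\ge e(T_r(n))-\epsilon n^2$.

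For (1), I would write $|V_i|=\frac nr+a_i$ with $\sum_i a_i=0$. Then
$$\sum_{i<j}|V_i||V_j| = \binom{n}{2}\Big(1-\tfrac1r\Big)\cdot\tfrac{2}{n-1}\cdot\tfrac{n}{2}\;-\;\tfrac12\sum_i a_i^2+O(n),$$
that is, $e(T_r(n))-\frac12\sum_i a_i^2+O(n)$. Combining this with $\sum_{i<j}|V_i||V_j|\ge\sum_{i<j}e(V_i,V_j)\ge e(T_r(n))-\epsilon n^2$ gives $\sum_i a_i^2\le 2\epsilon n^2+O(n)$. Hence $|a_i|<2\sqrt\epsilon\, n$ for large $n$.

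For (2), I would count edges. We have $e(G)\ge e(T_r(n))-\epsilon n^2\ge(1-\frac1r)\frac{n^2}{2}-\epsilon n^2-O(n)$. Suppose $|L_0|>\sqrt\epsilon n$, and take a subset $L'\subseteq L_0$ with $|L'|=\lfloor\sqrt\epsilon n\rfloor$. Deleting $L'$ removes at most $|L'|(1-\frac1r-6\sqrt\epsilon)n$ edges. This leaves a $tK^+$-free graph on $n'=n-|L'|$ vertices with at least
$$\Big(1-\tfrac1r\Big)\tfrac{n^2}{2}-\epsilon n^2-|L'|\Big(1-\tfrac1r-6\sqrt\epsilon\Big)n$$
edges. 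Write $|L'|=\sqrt\epsilon n$ up to rounding. Then $(1-\frac1r)\frac{n'^2}{2}=(1-\frac1r)\frac{n^2}{2}-(1-\frac1r)\sqrt\epsilon n^2+(1-\frac1r)\frac{\epsilon n^2}{2}$. So the remaining graph exceeds $(1-\frac1r)\frac{n'^2}{2}$ by about $6\epsilon n^2-\epsilon n^2-\frac{\epsilon}{2}n^2>4\epsilon n^2$. This is much more than ${\rm ex}(n',tK^+(n_1,\dots,n_r))=e(T_r(n'))+O(n')$, which follows from Theorem \ref{Simonovits} or from Erd\H{o}s--Stone plus the $O(n)$ correction. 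That is a contradiction.

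For (3), maximality of the cut gives $d_{V_i}(v)\le d_{V_j}(v)$ for all $j$ whenever $v\in V_i$; otherwise moving $v$ would increase the cut. Therefore $d_{V_i}(v)\le d(v)/r$. Each $v\in W_0$ contributes at least $\frac{2r}{r-1}\sqrt\epsilon n$ to $\sum_i 2e(G[V_i])\le 2\epsilon n^2$. This gives $|W_0|\cdot\frac{2r}{r-1}\sqrt\epsilon n\le 2\epsilon n^2$, i.e. $|W_0|\le\frac{r-1}{r}\sqrt\epsilon n$, which is exactly the claimed bound. So (3) is pure counting.

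The main obstacle is making the constants in (2) come out exactly at $\sqrt\epsilon n$. I expect the slack $6\sqrt\epsilon$ to be chosen precisely so that the calculation above closes, with rounding absorbed by $n$ large. I would first try the deletion argument sketched above. If the constants fall short, I would instead bound $\sum_v d(v)$ directly against $\sum_i|V_i|(n-|V_i|)$ using part (1).
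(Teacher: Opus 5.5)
The paper does not prove this lemma; it quotes it from Lei and Li, so there is no internal proof to compare with. Your argument is correct and follows the standard route.

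\begin{itemize}
\item The max-cut partition inherits $\sum_i e(G[V_i])\le\epsilon n^2$ from the Tur\'an partition.
\item Part (1) follows from the exact identity $\sum_{i<j}|V_i||V_j|=\frac{r-1}{2r}n^2-\frac12\sum_i a_i^2$ together with $e(T_r(n))\ge\frac{r-1}{2r}n^2-\frac r8$. The error is therefore $O(1)$, and your displayed binomial expression is just $\frac{r-1}{2r}n^2$.
\item Part (3) is pure double counting. The max-cut inequality $d_{V_i}(v)\le d_{V_j}(v)$ you invoke there is never used.
\end{itemize}

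Your worry about the constants in (2) is unfounded; the calculation closes with room to spare. Take $\ell=\lfloor\sqrt\epsilon n\rfloor$. The linear terms in $\ell n$ cancel, leaving
$e(G-L')-\frac{r-1}{2r}(n-\ell)^2\ge 6\sqrt\epsilon\,\ell n-\epsilon n^2-\frac{r-1}{2r}\ell^2-\frac r8\ge \frac92\epsilon n^2-O(n)>4\epsilon n^2.$
Since this surplus is a fixed multiple of $n^2$, the Erd\H{o}s--Stone bound ${\rm ex}(n',tK^+(n_1,\dots,n_r))\le\frac{r-1}{2r}n'^2+o(n'^2)$ already gives the contradiction. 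Simonovits' exact result is not needed, and any constant larger than $\frac32$ in place of $6$ would do.

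Parts (1) and (2) do require $n$ large in terms of $\epsilon$ (and of $h$). That assumption is implicit in how the paper applies the lemma, with $\epsilon=\xi^{6h}$ fixed and $n\to\infty$.
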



Given a sufficiently small constant $\xi>0$,
Let $\epsilon=\xi^{6h}<\frac{1}{64r^6h^2}$. 
There exists a sufficiently large $n$ such that $\frac{1}{n} \ll \xi^{6h}$.
Since $\xi$ is sufficiently small,
we have $6\sqrt{\epsilon}\le \xi$, $\sqrt{\epsilon}\leq\xi^2$, $\frac{r}{r-1}\sqrt{\epsilon}\le \xi^3.$
Thus, 
for the graph $G^*$, 
there exists a partition $V(G^*)=V_1\cup\dots\cup V_r$
such that $\sum_{i=1}^r e(G^*[V_i])\le \epsilon n^2$ and $\sum_{1\le i<j\le r} e(V_i,V_j)$ attains the maximum.
Moreover, 
\begin{enumerate}[label=(\arabic*)]
\item For each $1\le i\le r$,
$\frac nr-2\xi^2\,n<|V_i|<\frac nr+2\xi^2\,n.$
\item Let $L=\left\{v\in V(G^*): d_{G^*}(v)\le\left(1-\frac1r-\xi\right)n\right\}.$
Then
$|L|\leq \xi^2 n$.
\item Let $W=\bigcup_{i=1}^r \left\{v\in V_i: d_{V_i}(v)\ge 2\xi^3 n\right\}.$
Then $|W|\leq \xi^3 n.$
\end{enumerate}

\begin{lemma}\label{344}
For any $S\subseteq V(G^*)$ with $|S|\leq(t-1)h$,
if there exists an edge $e$ in $G^*[V_i\setminus (L\cup W\cup S)]$ for $i\in \{1,2,\dots,r\}$,
then $G^*-(L\cup W\cup S)$ contains a $K^+(n_1,n_2,\dots,n_r)$ containing the edge $e$.
\end{lemma}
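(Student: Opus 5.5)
Let $e=u_1u_2$ with $u_1,u_2\in V_i\setminus(L\cup W\cup S)$; by symmetry take $i=1$. The plan is a greedy embedding: build $K^+(n_1,\dots,n_r)$ with $u_1,u_2$ in the first part, using the fact that vertices outside $L\cup W$ have almost complete neighbourhoods in every other part.

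\textbf{Degree bounds.} Since $u\notin L$, we have $d_{G^*}(u)>(1-\frac1r-\xi)n$. Since $u\notin W$, we have $d_{V_1}(u)<2\xi^3n$. Combined with $|V_j|<\frac nr+2\xi^2n$ from property (1), this shows that for every $j\neq 1$ the number of non-neighbours of $u$ in $V_j$ is at most
\[
\sum_{k\neq 1}|V_k|-\bigl(d_{G^*}(u)-d_{V_1}(u)\bigr)\le \Bigl(n-\tfrac nr+2\xi^2 n\Bigr)-\Bigl(1-\tfrac1r-\xi\Bigr)n+2\xi^3n\le 2\xi n .
\]
The same bound holds for every vertex of $V(G^*)\setminus(L\cup W)$ with respect to each part other than its own. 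The removed set $L\cup W\cup S$ has size at most $\xi^2n+\xi^3n+(t-1)h=o(\xi n)$. Hence each $V_j':=V_j\setminus(L\cup W\cup S)$ still has size at least $\frac nr-3\xi^2 n$.

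\textbf{Greedy construction.} We choose, in order, sets $A_1\subseteq V_1'$ with $|A_1|=n_1$ and $u_1,u_2\in A_1$, then $A_2\subseteq V_2',\dots,A_r\subseteq V_r'$ with $|A_j|=n_j$, such that every vertex in $A_j$ is adjacent to every vertex in $A_1\cup\dots\cup A_{j-1}$.

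First pick $A_1$ as $u_1,u_2$ together with any $n_1-2$ further vertices of $V_1'$. No edges are needed inside $A_1$ other than $e$, since $K^+$ only adds the edge $e$ to the first part.

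When choosing $A_j$, the already chosen set $B=A_1\cup\dots\cup A_{j-1}$ has $|B|\le h$, and each vertex of $B$ lies outside $L\cup W$ and outside $V_j$. So each vertex of $B$ misses at most $2\xi n$ vertices of $V_j'$. The common neighbourhood of $B$ in $V_j'$ therefore has size at least
\[
\frac nr-3\xi^2n-2h\xi n\ \ge\ \frac n{2r}\ \ge\ n_j,
\]
since $\xi$ is small relative to $1/(rh)$ and $n$ is large. This is exactly the counting in Lemma \ref{Cioaba} applied to the sets $N(v)\cap V_j'$, $v\in B$. So we can pick $A_j$ inside this common neighbourhood.

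The sets $A_1\cup\dots\cup A_r$ then span a copy of $K^+(n_1,\dots,n_r)$ that contains $e$ and avoids $L\cup W\cup S$, as required.

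\textbf{Main obstacle.} The only delicate point is the first step: obtaining a non-neighbour bound of order $\xi n$ per part, uniformly over all parts $j\neq i$. This needs the upper bounds on $|V_k|$ and the lower bound on $d_{G^*}(u)$ to be combined correctly. If the constants come out slightly worse, say $4\xi n$, the argument is unaffected, because we only need $h\cdot O(\xi)\ll\frac1r$, and this holds by the choice $\epsilon=\xi^{6h}$ with $\xi$ small.
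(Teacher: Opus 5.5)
Your proof is correct and follows essentially the paper's argument: a greedy embedding where each new part $A_j$ is taken inside the common neighbourhood, within $V_j\setminus(L\cup W\cup S)$, of all previously chosen vertices, using that vertices outside $L\cup W$ miss only $O(\xi n)$ vertices of every other part. The differences are cosmetic. You fix the $n_1-2$ extra first-part vertices at the start, whereas the paper picks them last as common neighbours of the other parts. You also count non-neighbours directly instead of invoking Lemma~\ref{Cioaba}. Finally, your bound $|V_j'|\ge \frac nr-3\xi^2n$ should read $\frac nr-3\xi^2n-\xi^3n-(t-1)h$, which changes nothing.
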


\begin{proof}
Without loss of generality,
assume that $uv$ is an edge within $G^*[V_1\setminus(L\cup W\cup S)]$.
For each $w\in V_i\setminus (L\cup W\cup S),i\in\{1,2,\dots,r\}$,
by the definitions of $L$ and $W$,
since $w\notin L$,
we have $d_{G^*}(w)>\left(1-\frac{1}{r}-\xi\right)n,$
since $w\notin W$,
we have $d_{V_i}(w)<2\xi^3 n.$
Then for $i\neq i'$, we have 
\begin{equation}\label{du}
\begin{split}
d_{V_{i'}}(w) &\geq d_{G^*}(w)-d_{V_i}(w)-\sum_{j=1,j\neq i,i'}^r|V_j| \\
& > \left(1-\frac{1}{r}-\xi\right)n-2\xi^3 n-(r-2)\left(\frac{n}{r}+2\xi^2 n\right) \\
& > \left(\frac{1}{r}-\xi-2\xi^2 (r-2) -2\xi^3 \right)n.
\end{split}
\end{equation}
For the number of common neighbors of $u$ and $v$ in $V_2\setminus (L\cup W\cup S)$,
by Lemma \ref{Cioaba} and Eq. (\ref{du}),
we have
\begin{align*}
&\left|N_{V_2}(u)\cap N_{V_2}(v)\setminus (L\cup W\cup S)\right|\\
\geq& d_{V_2}(u)+d_{V_2}(v)-\left|N_{V_2}(u)\cup N_{V_2}(v)\right|-|L|-|W|-|S|\\
>& 2\left(\frac{n}{r}-2\xi^2 n(r-2)-\xi n-2\xi^3 n\right)-\left(\frac{n}{r}+2\xi^2 n\right)-\xi^2 n-\xi^3 n-(t-1)h\\
=&\left(\frac{1}{r}-2\xi -(4r-5)\xi^2 -5\xi^3 \right)n-(t-1)h\\
>&n_2.
\end{align*}
Therefore, 
$u$ and $v$ have at least $n_2$ common neighbors in $V_2\setminus (L\cup W\cup S)$,
denoted by $u^{(2)}_{1},u^{(2)}_{2},\dots,u^{(2)}_{n_2}$.
Consequently,
the subgraph induced by $\{u,v\}\cup \{u^{(2)}_{1},u^{(2)}_{2},\dots,u^{(2)}_{n_2}\}$ contains a copy of $K^+(2,n_2)$.
Hence,
for $\ell\in \{2,\dots,r-1\}$,
assume that there exist at least $n_{\ell}$ vertices in $V_{\ell}\setminus (L\cup W\cup S)$ such that the subgraph induced by $\{u,v\}\cup \{u^{(2)}_{1},u^{(2)}_{2},\dots,u^{(2)}_{n_2}\}\cup\dots \cup \{u^{(\ell)}_{1},u^{(\ell)}_{2},\dots,u^{(\ell)}_{n_{\ell}}\}$ contains a copy of $K^+(2,n_2,\dots,n_{\ell})$.
Next,
we consider the common neighbors of $2+\sum_{i=2}^{\ell}n_i$ vertices in $V_{\ell+1}\setminus (L\cup W\cup S)$.
For any $w\in U=\{u,v,u^{(2)}_{1},\dots,u^{(\ell)}_{n_{\ell}}\}$,
by Lemma \ref{Cioaba} and Eq. (\ref{du}) we have
\begin{align*}
&\left|\bigcap_{w\in U}N_{V_{\ell+1}}(w)\setminus (L\cup W\cup S)\right|\\
\intertext{\vspace{-\baselineskip}\pagebreak}
\geq& \sum_{w\in U}d_{V_{\ell+1}}(w)-(|U|-1)\left|\bigcup_{w\in U}N_{V_{\ell+1}}(w)\right|-|L|-|W|-|S|\\
\geq& \Big(2+\sum_{i=2}^{\ell}n_i\Big)\left(\frac{n}{r}-2\xi^2 n(r-2)-\xi n-2\xi^3 n\right)-\Big(1+\sum_{i=2}^{\ell}n_i\Big)\left(\frac{n}{r}+2\xi^2 n\right)-\xi^2 n-\xi^3 n\\&-(t-1)h\\
=&\left(\frac{1}{r}-\Big(2+\sum_{i=2}^{\ell}n_i\Big)\xi-\Big(4r-5+2(r-1)\sum_{i=2}^{\ell}n_i\Big)\xi^2- \Big(5+2\sum_{i=2}^{\ell}n_i\Big)\xi^3\right)n-(t-1)h\\
>&n_{\ell+1}.
\end{align*}
Therefore,
we can choose $n_{\ell+1}$ vertices in $V_{\ell+1}\setminus (L\cup W\cup S)$ which are adjacent to all vertices in $K^+(2,n_2,\dots,n_{\ell})$,
the subgraph induced by these vertices contains a copy of $K^+(2,n_2,\dots,n_{\ell+1})$.
Repeating the above operation,
we obtain a copy of $K^+(2,n_2,\dots,n_r)$ in $G^*-(L\cup W\cup S)$.
Next,
we consider the common neighbors of $U'=\{u^{(2)}_{1},\dots,u^{(r)}_{n_r}\}$ in $V_1\setminus (W\cup L\cup S \cup \{u,v\})$.
By Lemma \ref{Cioaba} and Eq. (\ref{du}),
we have
\begin{align*}
&\left|\bigcap_{w\in U'}N_{V_1}(w)\setminus (L\cup W\cup S \cup\{u,v\})\right|\\
\geq& \sum_{w\in U'}d_{V_1}(w)-(|U'|-1)\left|\bigcup_{w\in U'}N_{V_1}(w)\right|-|W|-|L|-|S|-2\\
\geq& \Big(\sum_{i=2}^rn_i\Big)\left(\frac{n}{r}-2\xi^2 n(r-2)-\xi n-2\xi^3 n\right)-\Big(\sum_{i=2}^rn_i-1\Big)\left(\frac{n}{r}+2\xi^2 n\right)-\xi^2 n-\xi^3 n\\&-(t-1)h-2\\
>&\left(\frac{1}{r}-\Big(\sum_{i=2}^rn_i\Big)\xi-\Big(2(r-1)\sum_{i=2}^rn_i-1\Big)\xi^2-\Big(2\sum_{i=2}^rn_i+1\Big)\xi^3 \right)n-(t-1)h-2\\
>&n_1-2.
\end{align*}
Then we can find $n_1-2$ vertices that are common neighbors of the vertices in $U'$.
Thus,
we obtain a copy of $K^+(n_1,n_2,\dots,n_r)$ in $G^*-(L\cup W\cup S)$.
Hence,
$G^*-(L\cup W\cup S)$ contains a $K^+(n_1,n_2,\dots,n_r)$ containing $uv$.
\end{proof}

\begin{lemma}\label{3441}
Let $H=\bigcup_{i=1}^r G^*[V_i\setminus(L\cup W)].$
Then the matching number $\mu(H)\leq t-1$.
Moreover,
for each $i\in\{1,2,\dots,r\}$,
there exists an independent set $I_i\subseteq V_i\setminus(L\cup W)$ such that $|I_i|\geq |V_i\setminus(L\cup W)|-2(t-1).$
\end{lemma}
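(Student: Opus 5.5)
The plan is to argue by contradiction using Lemma \ref{344} repeatedly, with the set $S$ absorbing the vertices of copies already found. Suppose $\mu(H)\geq t$, and let $e_1,\dots,e_t$ be a matching of size $t$ in $H$. Each $e_j$ lies in some $G^*[V_{i_j}\setminus(L\cup W)]$, since $H$ is the disjoint union of these induced subgraphs. We then build $t$ vertex-disjoint copies of $K^+(n_1,n_2,\dots,n_r)$ greedily. Start with $S_1=\{$endpoints of $e_2,\dots,e_t\}$. Given $S_j$, apply Lemma \ref{344} with $S=S_j$ to the edge $e_j$ to get a copy $F_j$ of $K^+(n_1,\dots,n_r)$ in $G^*-(L\cup W\cup S_j)$ that contains $e_j$. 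Then set $S_{j+1}$ to be the union of $V(F_1),\dots,V(F_j)$ with the endpoints of $e_{j+2},\dots,e_t$.

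The only point needing care is the size condition $|S_j|\le (t-1)h$. Here $S_j$ consists of the $j-1$ previous copies, which contribute $(j-1)h$ vertices, and the endpoints of the $t-j$ later matching edges, which contribute $2(t-j)$ vertices. Since $n_1\ge 2$ and $r\ge 2$, we have $h\geq 3\ge 2$, so $|S_j|\le (j-1)h+2(t-j)\le (t-1)h$. This keeps the hypothesis of Lemma \ref{344} satisfied at every step. Excluding the endpoints of the later edges also guarantees that $e_j$ survives in $G^*[V_{i_j}\setminus(L\cup W\cup S_j)]$, since its endpoints are not in $S_j$ (the matching is disjoint). The resulting copies $F_1,\dots,F_t$ are then pairwise disjoint, so $G^*$ contains $tK^+(n_1,\dots,n_r)$, contradicting the choice of $G^*$. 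Hence $\mu(H)\le t-1$.

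For the independent set, fix $i$ and take a maximum matching $M_i$ of $G^*[V_i\setminus(L\cup W)]$. It has at most $t-1$ edges, since it is a matching in $H$. Let $I_i$ be $V_i\setminus(L\cup W)$ minus the endpoints of $M_i$. By maximality of $M_i$, no edge of $G^*[V_i\setminus(L\cup W)]$ has both endpoints outside $V(M_i)$, so $I_i$ is independent. Moreover $|I_i|\ge |V_i\setminus(L\cup W)|-2(t-1)$.

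The main, and really the only, subtle step is the bookkeeping for $S$ in the greedy construction. It must both keep $|S|\le (t-1)h$ and protect the not-yet-used matching edges. Including the endpoints of future edges in $S$ handles both requirements at once.
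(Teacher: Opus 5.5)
Your proposal is correct and follows the paper's proof essentially step for step. It uses the same greedy application of Lemma \ref{344}, with $S_q$ made up of the earlier copies together with the endpoints of the later matching edges, the same bound $(q-1)h+2(t-q)\le (t-1)h$, and the same maximum-matching construction of $I_i$.
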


\begin{proof}
Suppose to the contrary that $\mu(H)\ge t$.
Then there exist $t$ disjoint edges $e_1,\dots,e_t \in E(H)$.
For some $1\le q\le t$,
suppose that $F_1,\dots,F_{q-1}$ have already been constructed,
where each $F_i \ (i=1,\dots,q-1)$ is a copy of $K^+(n_1,\dots,n_r)$,
and these copies are disjoint.
Let $S_q=(\bigcup_{i=1}^{q-1}V(F_i))\cup (\bigcup_{i=q+1}^t V(e_i))$.
Then $|S_q|\le (q-1)h+2(t-q)\le (t-1)h.$
Moreover,
since the edges $e_1,\dots,e_t$ are disjoint and the vertices of $e_{q+1},\dots,e_t$ have been included in $S_q$,
we have $e_q$ is contained in $G^*[V_i\setminus(L\cup W\cup S_q)].$
By Lemma \ref{344}, 
we know that the graph $G^*-(L\cup W\cup S_q)$ contains a copy of $K^+(n_1,\dots,n_r)$ containing the edge $e_q$,
denoted by $F_q$.
Obviously,
$F_q$ is disjoint from $F_1,\dots,F_{q-1}$ and also disjoint from the edges $e_{q+1},\dots,e_t$.
Repeating this process for $q=1,\dots,t$,
we obtain $t$ disjoint copies of $K^+(n_1,\dots,n_r)$ in $G^*$,
a contradiction.
Hence $\mu(H)\le t-1$.

Let $M_i$ be the maximum matching in $G^*[V_i\setminus (L\cup W)]$.
Since $G^*[V_i\setminus (L\cup W)]\subseteq H$, 
we have $|M_i|\le \mu(H)\le t-1.$
Let $I_i=(V_i\setminus (L\cup W))\setminus V(M_i).$
Since $M_i$ is a maximum matching, 
we have $I_i$ is independent. 
Otherwise, 
if there exists an edge inside $I_i$, 
then this edge would be disjoint from all edges of $M_i$ and hence could be added to $M_i$, 
contradicting the maximality of $M_i$. 
Therefore,
$|I_i|=|V_i\setminus (L\cup W)|-|V(M_i)|=|V_i\setminus (L\cup W)|-2|M_i|\geq |V_i\setminus (L\cup W)|-2(t-1).$
\end{proof}

Obviously, 
$\mathcal{A}_s(G^*)$ is a nonnegative tensor.
By Lemma \ref{Lemma2}, 
$\rho_s(G^*)$ is an eigenvalue of $\mathcal{A}_s(G^*)$,
with a nonnegative eigenvector corresponding to it.
Let $x=(x_1,x_2,\dots,x_n)^{\top}$ be a nonnegative eigenvector corresponding to $\rho_s(G^*)$ with $\max_{v\in V(G^*)}x_v=1.$
In the following three lemmas, 
we shall give some local structural properties of $G^*$.

\begin{lemma}\label{377}
Let $v_0$ be the vertex such that $x_{v_0}=\max\{x_v\ |\ v\in V(G^*)\setminus W \}$.
Then $$x_{v_0}^{s-1}\geq \frac{(s-1)!}{4}{r-1\choose s-1}\left(\frac{1}{r}\right)^{s-1}.$$
\end{lemma}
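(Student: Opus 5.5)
We evaluate the eigen-equation of $\mathcal{A}_s(G^*)$ at $v_0$. By the definition of the $s$-clique tensor, for every vertex $v$,
$$\rho_s(G^*)x_v^{s-1}=\sum_{\{v,i_2,\dots,i_s\}\in C_s(G^*)} x_{i_2}\cdots x_{i_s}.$$
Here each $s$-clique containing $v$ is counted once: the factor $\frac{1}{(s-1)!}$ cancels the $(s-1)!$ orderings of the other vertices.

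The plan is to bound the right-hand side at $v_0$ from above, splitting the $s$-cliques through $v_0$ according to whether they meet $W$. We use $x_u\le x_{v_0}$ for $u\notin W$ and $x_u\le 1$ for $u\in W$. Since $\max_v x_v=1$, some vertex $u^*$ has $x_{u^*}=1$. If $u^*\notin W$, then $x_{v_0}=1$ and the claim is trivial, because $\frac{(s-1)!}{4}\binom{r-1}{s-1}r^{1-s}\le 1$. So assume $u^*\in W$ and $x_{v_0}<1$.

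Now apply the eigen-equation at $u^*$ instead, since $x_{u^*}=1$ gives a clean left side $\rho_s(G^*)$. Split the $s$-cliques containing $u^*$ into two classes.
\begin{itemize}
\item[(a)] Cliques whose other $s-1$ vertices avoid $W$. Each contributes at most $x_{v_0}^{s-1}$, and there are at most $\binom{n-1}{s-1}\le \frac{n^{s-1}}{(s-1)!}$ of them.
\item[(b)] Cliques containing at least one further vertex of $W$. Each contributes at most $1$, and there are at most $|W|\binom{n}{s-2}\le \xi^3 n\cdot n^{s-2}=\xi^3 n^{s-1}$ of them.
\end{itemize}
This gives
$$\rho_s(G^*)\le \frac{n^{s-1}}{(s-1)!}x_{v_0}^{s-1}+\xi^3 n^{s-1}.$$

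Combine this with Lemma \ref{311}, which gives $\rho_s(G^*)\ge \binom{r-1}{s-1}(n/r)^{s-1}$. Then
$$\frac{x_{v_0}^{s-1}}{(s-1)!}\ge \binom{r-1}{s-1}r^{1-s}-\xi^3.$$
Since $\xi$ is sufficiently small, $\xi^3\le \frac34\binom{r-1}{s-1}r^{1-s}$, and we obtain
$$x_{v_0}^{s-1}\ge \frac{(s-1)!}{4}\binom{r-1}{s-1}\left(\frac1r\right)^{s-1}.$$
In fact the argument gives a much stronger bound than $1/4$; the constant $1/4$ simply leaves ample slack.

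The only delicate point is the counting in case (b): the bound $|W|\le\xi^3 n$ from property (3) must be available, and the $(s-1)!$ normalization must be handled consistently. Otherwise the argument is routine.
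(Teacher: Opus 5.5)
Your proof is correct and follows essentially the same route as the paper: write the eigen-equation at the vertex with $x_{u_0}=1$, bound the cliques avoiding $W$ by $\binom{n-1}{s-1}x_{v_0}^{s-1}$ and those meeting $W$ by $O(\xi^3 n^{s-1})$, then compare with Lemma \ref{311}. The differences are cosmetic. Your count $|W|\binom{n}{s-2}$ replaces the paper's sum $\sum_{j}\binom{|W|}{j}\binom{n-|W|}{s-1-j}$, and your case split on whether $u^*\in W$ is unnecessary, since the same estimate applies either way.
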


\begin{proof}
Let $u_0$ be the vertex with $x_{u_0}=\max\{x_v\ |\ v\in V(G^*)\}=1$.
We have
\begin{align*}
\rho_s(G^*)=\rho_s(G^*)x_{u_0}^{s-1}&=\sum_{\{u_0,i_2,\dots,i_s\}\in C_s(G^*)}x_{i_2}\dots x_{i_s}\\
&=\sum_{\substack{\{u_0,i_2,\dots,i_s\}\in C_s(G^*)\\|\{i_2,\dots,i_s\}\cap W|=0}}x_{i_2}\dots x_{i_s}
+\sum_{j=1}^{s-1}\sum_{\substack{\{u_0,i_2,\dots,i_s\}\in C_s(G^*)\\|\{i_2,\dots,i_s\}\cap W|=j}}x_{i_2}\dots x_{i_s}\\
&\leq {n-1\choose s-1}x_{v_0}^{s-1}+\sum_{j=1}^{s-1}{|W|\choose j}{n-|W|\choose s-1-j}\\
\intertext{\vspace{-\baselineskip}\pagebreak}
&\leq\frac{n^{s-1}}{(s-1)!}x_{v_0}^{s-1}+ \sum_{j=1}^{s-1}\frac{|W|^j}{j!}\frac{(n-|W|)^{s-1-j}}{(s-1-j)!}.
\end{align*}
Since $|W|\leq \xi^3 n$,
we have 
\begin{align*}
\sum_{j=1}^{s-1}\frac{|W|^j}{j!}\frac{(n-|W|)^{s-1-j}}{(s-1-j)!}&=
\sum_{j=1}^{s-1}\frac{(\xi^3)^jn^{s-1}}{j!(s-1-j)!}=\frac{n^{s-1}}{(s-1)!}\sum_{j=1}^{s-1}{s-1\choose j}(\xi^3)^j\\
&\leq\frac{n^{s-1}}{(s-1)!}\big((1+\xi^3)^{s-1}-1\big).
\end{align*}
Since $\xi$ is sufficiently small,
we have $(1+\xi^3)^{s-1}-1\leq \frac{3(s-1)!}{4}{r-1\choose s-1}(\frac{1}{r})^{s-1}$.
By Lemma \ref{311}, 
we have $\rho_s(G^*)\geq \binom{r-1}{s-1}\left(\frac nr\right)^{s-1}$.
Hence,
we have 
$x_{v_0}^{s-1}\geq \frac{(s-1)!}{4}{r-1\choose s-1}\left(\frac{1}{r}\right)^{s-1}.$
\end{proof}

\begin{lemma}\label{388}
Let $v_0$ be the vertex such that $x_{v_0}=\max\{x_v\ |\ v\in V(G^*)\setminus W \}$.
Then $v_0\notin L$.
\end{lemma}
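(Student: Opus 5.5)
We argue by contradiction: suppose $v_0\in L$, so that $d_{G^*}(v_0)\le\left(1-\frac1r-\xi\right)n$. The idea is to bound $\rho_s(G^*)x_{v_0}^{s-1}$ from above using the eigen-equation at $v_0$, and from below using Lemma \ref{311} and Lemma \ref{377}. These two bounds will turn out to be incompatible.

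\textbf{Eigen-equation at $v_0$.} We have
$$\rho_s(G^*)x_{v_0}^{s-1}=\sum_{\{v_0,i_2,\dots,i_s\}\in C_s(G^*)}x_{i_2}\cdots x_{i_s}.$$
We split the sum by whether $\{i_2,\dots,i_s\}$ meets $W$, exactly as in the proof of Lemma \ref{377}.

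\textbf{Terms avoiding $W$.} Every factor satisfies $x_{i_j}\le x_{v_0}$. The vertices $i_2,\dots,i_s$ form an $(s-1)$-clique inside $N(v_0)$. So these terms contribute at most $|C_{s-1}(G^*[N(v_0)])|\,x_{v_0}^{s-1}$.

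To bound the number of $(s-1)$-cliques in $N(v_0)$, we use the near-Tur\'an structure of $G^*$. The graph $G^*$ differs from $T_r(n)$ in at most $\epsilon n^2$ edges, and $\sum_i e(G^*[V_i])\le\epsilon n^2$. Hence an $(s-1)$-clique in $N(v_0)$ either takes its vertices from $s-1$ distinct parts, or uses an edge inside some $V_i$. The cliques of the second type number at most $\epsilon n^2\cdot n^{s-3}=\epsilon n^{s-1}$.

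For cliques of the first type, write $d_i=|N(v_0)\cap V_i|$, so that $\sum_i d_i\le(1-\frac1r-\xi)n$ and each $d_i\le\frac nr+2\xi^2n$. Their number is at most the elementary symmetric sum $e_{s-1}(d_1,\dots,d_r)$. Maximizing this sum under these constraints gives at most
$$\left(\binom{r-1}{s-1}\left(\frac nr\right)^{s-1}-c'\xi n^{s-1}\right)\frac{1}{1}$$
for some constant $c'>0$ depending only on $r,s$. The reason is that the total degree is short by $\xi n$ relative to $(1-\frac1r)n$, while the caps on the individual $d_i$ prevent compensation. Concretely, the maximum is attained when $r-1$ of the $d_i$ are essentially at the cap, and this costs order $\xi n$ in one part, hence order $\xi n^{s-1}$ overall. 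Since the clique tensor entries are $\frac1{(s-1)!}$ summed over ordered tuples, the eigen-equation counts unordered cliques, so this is consistent with the form of Lemma \ref{311}.

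\textbf{Terms meeting $W$.} Bound every factor by $1$. Since $|W|\le\xi^3n$, these terms contribute at most $\frac{n^{s-1}}{(s-1)!}\big((1+\xi^3)^{s-1}-1\big)=O(\xi^3n^{s-1})$, as computed in Lemma \ref{377}.

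\textbf{Combining.} The two estimates give
$$\rho_s(G^*)x_{v_0}^{s-1}\le\left(\binom{r-1}{s-1}\frac{1}{r^{s-1}}-c'\xi+\epsilon\right)n^{s-1}x_{v_0}^{s-1}+O(\xi^3)n^{s-1}.$$
On the other side, Lemma \ref{311} gives $\rho_s(G^*)\ge\binom{r-1}{s-1}(n/r)^{s-1}$. Subtracting, we get
$$(c'\xi-\epsilon)\,x_{v_0}^{s-1}\le O(\xi^3).$$
By Lemma \ref{377}, $x_{v_0}^{s-1}$ is bounded below by a positive constant independent of $\xi$. Also $\epsilon=\xi^{6h}\ll\xi$. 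So the left side is of order $\xi$ while the right side is of order $\xi^3$, which is a contradiction for small $\xi$.

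\textbf{Main obstacle.} The key step is the optimization bound on $e_{s-1}(d_1,\dots,d_r)$: we must show that a degree deficit of $\xi n$ really costs $\Omega(\xi n^{s-1})$ cliques. This requires the caps $d_i\le\frac nr+2\xi^2n$ to be tight enough, and $2\xi^2\ll\xi$ ensures that they are. The plan is to argue by the monotonicity and Schur-concavity of $e_{s-1}$. Fixing the total $D=\sum_i d_i$, the value $e_{s-1}(d_1,\dots,d_r)$ is at most $e_{s-1}$ evaluated at $r-1$ coordinates equal to $D/(r-1)$. This is because the $(s-1)$-cliques using the smallest part can be absorbed: there are at most $\binom{r-1}{s-2}(n/r)^{s-2}\cdot d_{\min}$ of them, and $d_{\min}\le D-(r-1)\cdot$(cap) is negative or small. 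A cleaner route is to bound directly by
$$e_{s-1}(d)\le\binom{r-1}{s-1}\left(\frac{D}{r-1}\right)^{s-1}+O(\xi^2n^{s-1}),$$
obtained by using the caps to reduce to $r-1$ effective parts.
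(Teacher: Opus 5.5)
\paragraph{Gap in the optimization step.} The error is in the step you flag as the main obstacle. You impose two constraints on $d_i=|N(v_0)\cap V_i|$: $\sum_i d_i\le D:=(1-\frac1r-\xi)n$ and $d_i\le\frac nr+2\xi^2n$. For $s\ge3$ these do \emph{not} imply $e_{s-1}(d_1,\dots,d_r)\le\binom{r-1}{s-1}(\frac nr)^{s-1}-c'\xi n^{s-1}$.

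Since $e_{s-1}$ is Schur-concave, for a fixed sum it is maximized at the balanced point $d_1=\dots=d_r=D/r$, and there the caps do not bind. For $r=s=3$ this point gives $e_2\approx 3(\frac{2n}{9})^2=\frac{4}{27}n^2$, which exceeds $\binom{2}{2}(\frac n3)^2=\frac19 n^2$.

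Your justification runs the wrong way in two places:
\begin{itemize}
\item The configuration with $r-1$ coordinates at the cap is not the maximizer. It is not even feasible, since its sum exceeds $D$.
\item The caps give only the lower bound $d_{\min}\ge D-(r-1)(\frac nr+2\xi^2n)$, which is negative and hence vacuous. They give no upper bound on $d_{\min}$ of the kind you use.
\end{itemize}
The same objection applies to your \emph{cleaner route}. For $s=2$ the bound does hold, since $e_1$ is linear, but the argument fails in general.

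\paragraph{The missing ingredient.} By definition $v_0\notin W$. If $v_0\in V_k$, then $d_k=d_{V_k}(v_0)<2\xi^3 n$. Split the $(s-1)$-cliques with vertices in distinct parts into two groups:
\begin{itemize}
\item Those meeting $V_k$ number at most $2\xi^3 n^{s-1}$.
\item Those avoiding $V_k$ number at most $e_{s-1}(d_i:i\ne k)$. By Maclaurin's inequality over the $r-1$ remaining parts, this is at most $\binom{r-1}{s-1}\bigl(\frac{D}{r-1}\bigr)^{s-1}=\binom{r-1}{s-1}(\frac nr)^{s-1}\bigl(1-\frac{r\xi}{r-1}\bigr)^{s-1}$.
\end{itemize}
This yields the required saving of $\frac{r\xi}{r-1}\binom{r-1}{s-1}(\frac nr)^{s-1}$, and the caps are not needed at all. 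This is exactly how the paper uses $v_0\notin W$, through its class $C_2$.

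\paragraph{After the repair.} With this fix your argument is correct. It then differs from the paper only in how edges inside the other parts are handled.
\begin{itemize}
\item You charge the cliques using such edges to $\sum_i e(G^*[V_i])\le\epsilon n^2$.
\item The paper restricts to the independent sets $I_i$ of Lemma \ref{3441}. It pays $O(n^{s-2})$ for cliques leaving them (class $C_3$) and counts $(s-1)$-cliques in the $(r-1)$-partite graph $H_0$ (class $C_4$).
\end{itemize}
Your version is slightly more economical, since it does not need Lemma \ref{3441}.
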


\begin{proof}
Suppose to the contrary that $v_0\in L$.
Without loss of generality,
assume that \(v_0\in V_1\).
Let $C_s(v_0)$ be the set of $s$-cliques containing $v_0$ in $G^*$.
$C_s(v_0)$ can be partitioned into the following four classes:
\begin{align*}
C_1&=\{c_s\ |\ (c_s\setminus \{v_0\})\cap (W\cup L)\neq \emptyset\};\\
C_2&=\{c_s\ |\ (c_s\setminus \{v_0\})\cap (W\cup L)=\emptyset,(c_s\setminus \{v_0\})\cap V_1\neq\emptyset\};\\
C_3&=\{c_s\ |\ (c_s\setminus \{v_0\})\subseteq V(G^*)\setminus (V_1\cup W\cup L),(c_s\setminus \{v_0\})\cap (\cup_{i=2}^r(V_i\setminus  I_i))\neq\emptyset\};\\
C_4&=\{c_s\ |\ (c_s\setminus \{v_0\})\subseteq \cup_{i=2}^rI_i\}.
\end{align*}
Let $S_i=\sum_{c_s\in C_i}\prod_{v\in c_s\setminus \{v_0\}}x_v$,
We have $$\rho_s(G^*)x_{v_0}^{s-1}=\sum_{\{v_0,i_2,\dots,i_s\}\in C_s(v_0)}x_{i_2}\dots x_{i_s}=S_1+S_2+S_3+S_4.$$
Recall that $\max_{v\in V(G^*)}x_v=1$.
Since $|W\cup L|\leq(\xi^2+\xi^3) n$,
we have $$S_1=\sum_{c_s\in C_1}\prod_{v\in c_s\setminus \{v_0\}}x_v\leq|C_1|\leq |W\cup L|n^{s-2}\leq (\xi^2+\xi^3) n^{s-1}.$$
Since $v_0\notin W$,
we have $d_{V_1}(v_0)<2\xi^3 n.$
Then 
$$S_2=\sum_{c_s\in C_2}\prod_{v\in c_s\setminus \{v_0\}}x_v\leq |C_2|\leq d_{V_1}(v_0)n^{s-2}<2\xi^3 n^{s-1}.$$
By Lemma \ref{3441}, 
we have $|V_i\setminus(W\cup L)|-|I_i|\leq 2(t-1)$ for $2\leq i\leq r$.
Then $$S_3=\sum_{c_s\in C_3}\prod_{v\in c_s\setminus \{v_0\}}x_v\leq |C_3|\leq 2(t-1)(r-1)n^{s-2}.$$
It remains to estimate $S_4$.
Let $H_0=G^*[N(v_0)\cap(I_2\cup\dots\cup I_r)]$.
We have $|C_4|=|C_{s-1}(H_0)|$.
Since $v_0\in L$ and $I_i\subseteq V_i$ for each $i\in \{1,2,\dots,r\}$,
we have $|V(H_0)|\leq \sum_{j=2}^rd_{V_j}(v_0)\leq d_{G^*}(v_0)\leq \left(1-\frac{1}{r}-\xi\right)n.$
Then 
\begin{align*}
|C_4|&\leq {r-1\choose s-1}\left(\frac{|V(H_0)|}{r-1}\right)^{s-1}\\
&\leq {r-1\choose s-1}\left(1-\frac{r\xi}{r-1}\right)^{s-1}\left(\frac{n}{r}\right)^{s-1}\\
&\leq {r-1\choose s-1}\left(\frac{n}{r}\right)^{s-1}-\frac{r\xi}{r-1}{r-1\choose s-1}\left(\frac{n}{r}\right)^{s-1},
\end{align*}
where the last inequality holds since $\left(1-\frac{r\xi}{r-1}\right)^{s-1}\leq 1-\frac{r\xi}{r-1}$.
Thus,
$$S_4=\sum_{c_s\in C_4}\prod_{v\in c_s\setminus \{v_0\}}x_v\leq |C_4|x_{v_0}^{s-1}\leq {r-1\choose s-1}\left(\frac{n}{r}\right)^{s-1}x_{v_0}^{s-1}-\frac{r\xi}{r-1}{r-1\choose s-1}\left(\frac{n}{r}\right)^{s-1}x_{v_0}^{s-1}.$$
Combine $S_1$,
$S_2$,
$S_3$ and $S_4$,
we have
\begin{align*}
 \rho_s(G^*)x_{v_0}^{s-1}=&S_1+S_2+S_3+S_4\\
 \leq& {r-1\choose s-1}\left(\frac{n}{r}\right)^{s-1}x_{v_0}^{s-1}-\frac{r\xi}{r-1}{r-1\choose s-1}\left(\frac{x_{v_0}}{r}\right)^{s-1}n^{s-1}+(\xi^2+3\xi^3)n^{s-1}\\&+2(t-1)(r-1)n^{s-2}\\
 <&{r-1\choose s-1}\left(\frac{n}{r}\right)^{s-1}x_{v_0}^{s-1},
\end{align*}
where the last inequality holds since $\xi$ is sufficiently small, 
$n$ is sufficiently large and $x_{v_0}^{s-1}\geq \frac{(s-1)!}{4}{r-1\choose s-1}(\frac{1}{r})^{s-1}$.
Therefore,
$\rho_s(G^*)<{r-1\choose s-1}\left(\frac{n}{r}\right)^{s-1},$
a contradiction to Lemma \ref{311}.
Hence,
$v_0\notin L$.
\end{proof}

\begin{lemma}\label{3991}
$L\subseteq W$.
\end{lemma}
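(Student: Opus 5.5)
The plan is to argue by contradiction through a local switching operation. Suppose some $u\in L\setminus W$ exists, and say $u\in V_1$ without loss of generality. We form a new graph $G'$ by deleting every edge at $u$ and then joining $u$ to the vertices of $N_{G^*}(v_0)\cap\bigcup_{i\neq i_0}(V_i\setminus(L\cup W))$. Here $v_0$ is the vertex from Lemma \ref{377}, and $V_{i_0}$ is the part containing it; by Lemma \ref{388}, $v_0\notin L\cup W$. The aim is to show two things: $G'$ is still $tK^+(n_1,\dots,n_r)$-free, and $\rho_s(G')>\rho_s(G^*)$. Together these contradict the choice of $G^*$.

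\emph{Step 1: $G'$ is $tK^+(n_1,\dots,n_r)$-free.} Suppose $G'$ contains $t$ disjoint copies $F_1,\dots,F_t$. Exactly one of them, say $F_1$, uses $u$, since otherwise $G^*$ would already contain them. By construction the neighbours of $u$ in $F_1$ form a set $U$ with $|U|\le h-1$. All of $U$ lies in $\bigcup_{i\neq i_0}(V_i\setminus(L\cup W))$, and each vertex of $U$ has a common neighbourhood with $v_0$ inside $V_{i_0}$. We then run the common-neighbour count of Lemma \ref{344}, using Lemma \ref{Cioaba} and Eq. (\ref{du}). This shows the vertices of $U$ have at least $\left(\frac1r-O(h\xi)\right)n-th$ common neighbours in $V_{i_0}\setminus(L\cup W)$ outside $\bigcup_j V(F_j)$. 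Choose one such vertex $w$ and put it in place of $u$. This gives $t$ disjoint copies in $G^*$, a contradiction.

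\emph{Step 2: spectral comparison.} We write $y$ for $x$ rescaled so that $\sum y_v^s=1$. By Lemma \ref{Lemma1}, $\rho_s(G')-\rho_s(G^*)\ge y^{\top}\mathcal{A}_s(G')y^{s-1}-y^{\top}\mathcal{A}_s(G^*)y^{s-1}$, and this difference is proportional to $s\,x_u(\Sigma_{\rm new}-\Sigma_{\rm old})$. Here $\Sigma_{\rm old}$ (resp. $\Sigma_{\rm new}$) is the sum of $\prod_{v\in c\setminus\{u\}}x_v$ over $s$-cliques $c\ni u$ in $G^*$ (resp. $G'$). We bound the two sums as follows.
\begin{itemize}
\item For $\Sigma_{\rm old}$ we repeat the four-class estimate of Lemma \ref{388} with $u$ in place of $v_0$; that argument only used $u\in L$, $u\notin W$ and the independent sets $I_i$ of Lemma \ref{3441}. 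It gives $\Sigma_{\rm old}\le\binom{r-1}{s-1}\left(\frac nr\right)^{s-1}x_{v_0}^{s-1}\left(1-\frac{r\xi}{r-1}\right)+O(\xi^2n^{s-1})$.
\item For $\Sigma_{\rm new}$, every $s$-clique at $v_0$ that avoids $V_{i_0}\cup L\cup W$ becomes an $s$-clique at $u$ in $G'$, with the same product of the other coordinates. The remaining cliques at $v_0$ are those of the classes $C_1,C_2$ of Lemma \ref{388}, which weigh at most $(\xi^2+3\xi^3)n^{s-1}$. Hence $\Sigma_{\rm new}\ge\rho_s(G^*)x_{v_0}^{s-1}-(\xi^2+3\xi^3)n^{s-1}$.
\end{itemize}
We then use Lemma \ref{311} to bound $\rho_s(G^*)$ from below, and Lemma \ref{377} to bound $x_{v_0}^{s-1}$ below by a constant. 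With these, the negative term $\frac{r\xi}{r-1}\binom{r-1}{s-1}(x_{v_0}/r)^{s-1}n^{s-1}$ dominates all the $O(\xi^2 n^{s-1})$ errors, so $\Sigma_{\rm new}>\Sigma_{\rm old}$.

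\emph{Main obstacle: the case $x_u=0$.} In that case the Rayleigh quotient only gives $\rho_s(G')\ge\rho_s(G^*)$, and we need a strict inequality. Similarly, if $x_u>0$, the strict gain must survive the normalisation. What we would try first is to pass to the $s$-clique connected component realising $\rho_s(G^*)$ (Lemmas \ref{Lemma4} and \ref{dangqiejindang}). If $x_u=0$, then $u$ lies outside that component. Adding the new edges at $u$ then yields a graph whose tensor strictly dominates the component together with $u$, and that tensor is weakly irreducible. Lemma \ref{Lemma3} then gives strict increase. Checking this weak irreducibility carefully is the delicate step. If it proves hard, the fallback is to note that $G'$ is optimal as well and apply the same bounds to its Perron vector instead.
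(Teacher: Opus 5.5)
Your switching argument is the paper's. Three pieces match it directly: replacing $u$ by a common neighbour in $V_{i_0}$ to show $G'$ is free, the four-class bound of Lemma \ref{388} applied at $u$ for $\Sigma_{\rm old}$, and transferring the cliques at $v_0$ to get $\Sigma_{\rm new}$. Joining $u$ to $N(v_0)\cap\bigcup_{i\neq i_0}(V_i\setminus(L\cup W))$ rather than to $\bigcup_{i\neq 1}I_i$ is a harmless variant; it even moves the class $C_3$ into $\Sigma_{\rm new}$. Normalisation is not an issue when $x_u>0$. Since $\rho_s(G^*)$ equals the Rayleigh quotient of $x$ exactly, the difference is $s\,x_u(\Sigma_{\rm new}-\Sigma_{\rm old})/\|x\|_s^s>0$.

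The real divergence is the case $x_u=0$, which you leave as a plan. Your component route works, but two facts must be supplied.
\begin{enumerate}
\item Take the component to be the $s$-clique connected component $H$ containing $v_0$. Several components may attain $\rho_s(G^*)$, and $x$ can vanish on some of them.
\item Show that $x>0$ on $V(H)$; otherwise $x_u=0$ does not force $u\notin V(H)$. Every $s$-clique through a vertex of $H$ lies in $H$, so $x|_{V(H)}$ is a nonzero nonnegative eigenvector of $\mathcal{A}_s(H)$ for $\rho_s(G^*)$. Suppose its support $P$ were a proper subset of $V(H)$. Then $x|_P$ would be a positive eigenvector, for the same eigenvalue, of the tensor of the $s$-cliques of $H$ lying inside $P$. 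By the strict part of Lemma \ref{Lemma3}, that tensor has spectral radius $<\rho_s(H)\le\rho_s(G^*)$, a contradiction.
\end{enumerate}

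After that, weak irreducibility is immediate rather than delicate. Since $\Sigma_{\rm new}>0$, the vertex $u$ lies in some new $s$-clique $\{u\}\cup(c\setminus\{v_0\})$ with $c\setminus\{v_0\}\subseteq V(H)$. Hence $G'[V(H)\cup\{u\}]$ is $s$-clique connected, and Lemmas \ref{dangqiejindang} and \ref{Lemma3} give $\rho_s(G')>\rho_s(H)=\rho_s(G^*)$.

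The paper avoids component structure and positivity with a test-vector perturbation. It sets $y=x+\frac12x_{v_0}\mathbf{e}_u$ and uses only $\Sigma_{\rm new}\ge\frac12\rho_s(G^*)x_{v_0}^{s-1}$, which your bound already gives. This yields $s\sum_{c\in C_s(G')}\prod_{v\in c}y_v\ge\rho_s(G^*)\|x\|_s^s+\frac{s}{4}\rho_s(G^*)x_{v_0}^s$, which exceeds $\rho_s(G^*)\bigl(\|x\|_s^s+2^{-s}x_{v_0}^s\bigr)=\rho_s(G^*)\|y\|_s^s$. That route is shorter and more elementary than yours, and your vaguer fallback via the Perron vector of $G'$ is not needed.
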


\begin{proof}
Suppose to the contrary that $u\in L\setminus W$.
Recall that $v_0$ is the vertex such that $x_{v_0}=\max\{x_v\ |\ v\in V(G^*)\setminus W\}.$
Without loss of generality,
assume that $v_0\in V_1$ and $u\in V_k$,
where $k\in \{1,2,\dots,r\}$.
Construct a graph $G'$ from $G^*$ by deleting all edges incident with $u$,
and then joining $u$ to every vertex in $\bigcup_{2\leq i\leq r} I_i$.
We first prove that $\rho_s(G')>\rho_s(G^*)$.
It is known from the structure of $G'$ that
\begin{equation}
\begin{aligned}\label{000}
\rho_s(G')-\rho_s(G^*)&\geq \frac{x^\top\mathcal{A}_s(G')x^{s-1}}{\|x\|_s^s}-\frac{x^\top\mathcal{A}_s(G^*)x^{s-1}}{\|x\|_s^s}\\
&=\frac{sx_u\big((\mathcal{A}_s(G')x^{s-1})_u-(\mathcal{A}_s(G^*)x^{s-1})_u\big)}{\|x\|_s^s}\\
&=\frac{sx_u\big((\mathcal{A}_s(G')x^{s-1})_u-\rho_s(G^*)x_u^{s-1}\big)}{\|x\|_s^s}.
\end{aligned}
\end{equation}
Recall that $S_4$ is the contribution of those $s$-cliques $c_s$ in $G^*$ such that $c_s\setminus\{v_0\}\subseteq \bigcup_{i=2}^rI_i.$
The estimates for $S_1$, 
$S_2$ and $S_3$ are shown in Lemma \ref{388},
we have
$$S_4=\rho_s(G^*)x_{v_0}^{s-1}-S_1-S_2-S_3>\rho_s(G^*)x_{v_0}^{s-1}-(\xi^2+3\xi^3)n^{s-1}-2(t-1)(r-1)n^{s-2}.$$
For each $c_s\in C_4$ in $G^*$, 
by the construction of $G'$, 
$\{u\}\cup(c_s\setminus\{v_0\})$ forms an $s$-clique in $G'$. 
Hence,
\begin{equation}\label{4}
(\mathcal A_s(G')x^{s-1})_u\geq S_4>\rho_s(G^*)x_{v_0}^{s-1}-(\xi^2+3\xi^3)n^{s-1}-2(t-1)(r-1)n^{s-2}.
\end{equation}
Since $u\in L$, 
$u\notin W$,
similar to the proof in Lemma \ref{388},
we have 
\begin{equation}\label{5} 
\begin{aligned}
\rho_s(G^*)x_u^{s-1}\leq&\binom{r-1}{s-1}\left(\frac{n}{r}\right)^{s-1}x_{v_0}^{s-1}-
\frac{r\xi}{r-1}{r-1\choose s-1}\left(\frac{x_{v_0}}{r}\right)^{s-1}n^{s-1}\\
&+(\xi^2+3\xi^3)n^{s-1}+
2(t-1)(r-1)n^{s-2}.  
\end{aligned}
\end{equation}
Combining Eqs. (\ref{4}) and (\ref{5}),
we obtain
\begin{align*}
(\mathcal A_s(G')x^{s-1})_u- \rho_s(G^*)x_u^{s-1}
>&\left(\rho_s(G^*)-\binom{r-1}{s-1}\left(\frac nr\right)^{s-1}\right)x_{v_0}^{s-1}+\frac{r\xi}{r-1}{r-1\choose s-1}\left(\frac{x_{v_0}}{r}\right)^{s-1}n^{s-1}\\
&-2(\xi^2+3\xi^3)n^{s-1}-4(t-1)(r-1)n^{s-2}.
\end{align*}
Since $\xi$ is sufficiently small and $n$ is sufficiently large,
by Lemmas \ref{311} and \ref{377}, 
it follows that $(\mathcal A_s(G')x^{s-1})_u-(\mathcal{A}_s(G^*)x^{s-1})_u>0$.
If $x_u>0$, 
then by Eq. (\ref{000}) we have $\rho_s(G')-\rho_s(G^*)>0$.
It remains to consider the case $x_u=0$.
By Eq. (\ref{4}), Lemmas \ref{311} and \ref{377},
for sufficiently small $\xi$ and sufficiently large $n$,
we have $$(\mathcal A_s(G')x^{s-1})_u\geq S_4\geq \frac12\rho_s(G^*)x_{v_0}^{s-1}.$$
Let $y=x+\eta x_{v_0}\mathbf{e}_u,\eta=\frac{1}{2}.$
Since $x_u=0$,
we have $\|y\|_s^s=\|x\|_s^s+\eta^s x_{v_0}^s.$
Furthermore,
$G'$ and $G^*$ differ only in edges incident with $u$.
Hence
\begin{align*}
s\sum_{c_s\in C_s(G')}\prod_{v\in c_s}y_v&\geq s\sum_{c_s\in C_s(G^*)}\prod_{v\in c_s}x_v+s\eta x_{v_0}(\mathcal A_s(G')x^{s-1})_u \\
&\geq\rho_s(G^*)\|x\|_s^s+\frac{s}{2}\eta\rho_s(G^*)x_{v_0}^s \\
&>\rho_s(G^*)\left(\|x\|_s^s+\eta^s x_{v_0}^s\right) \\
&=\rho_s(G^*)\|y\|_s^s,
\end{align*}
where the last second inequality follows from $\eta=1/2$ and $s\geq2$.
Consequently,
by Lemma \ref{Lemma1}, 
we have $$\rho_s(G')\geq\frac{s\sum_{c_s\in C_s(G')}\prod_{v\in c_s}y_v}{\|y\|_s^s}>\rho_s(G^*).$$

Next,
we prove that $G'$ is $tK^+(n_1,n_2,\dots,n_r)$-free.
Otherwise,
$G'$ contains a copy of $tK^+(n_1,n_2,\dots,n_r)$,
denoted by $F$. 
Since $G^*$ is $tK^+(n_1,n_2,\dots,n_r)$-free and $G'$ differs from $G^*$ only in edges incident with $u$, 
there exists a copy of $K^+(n_1,n_2,\dots,n_r)$ in $F$ containing $u$ and we denote it by $F_1$. 
Let $F'=F-V(F_1).$
Then $F'$ is the union of the remaining $t-1$ copies of $K^+(n_1,n_2,\dots,n_r)$, 
and hence $|V(F')|=(t-1)h.$
Let $N_{F_1}(u)=\{u_1,u_2,\dots,u_m\}.$
By the construction of $G'$,
$N_{F_1}(u)\subseteq \bigcup_{2\leq i\leq r} I_i$. 
For any $u'\in N_{F_1}(u)\cap I_i$, 
where $i\neq 1$, 
we have $u'\notin L\cup W$. 
Therefore $d_{G^*}(u')>\left(1-\frac1r-\xi\right)n$ and $d_{V_i}(u')<2\xi^3 n.$
It follows that
\begin{align*}
d_{V_1}(u')&\geq d_{G^*}(u')-d_{V_i}(u')-\sum_{2\leq j\leq r, j\neq i} |V_j|  \\
&>\left(1-\frac1r-\xi\right)n-2\xi^3 n-(r-2)\left(\frac nr+2\xi^2\,n\right)  \\
&>\left(\frac {1}{r}-\xi-2(r-2)\xi^2-2\xi^3 \right)n .
\end{align*}
Thus, by Lemma \ref{Cioaba},
we have
\begin{align*}
\left|\bigcap_{\ell=1}^m N_{V_1}(u_\ell)\right|
&\geq \sum_{\ell=1}^m d_{V_1}(u_\ell)-(m-1)|V_1| \\
&>m\left(\frac nr-\xi n-2\xi^3 n-2(r-2)\xi^2\,n\right)-(m-1)\left(\frac nr+2\xi^2\,n\right) \\
&>\left(\frac {1}{r}-m\xi-2m(r-1)\xi^2-2m\xi^3 \right)n\\
&>th.
\end{align*}
Hence there exists a vertex $u''\in\left(\bigcap_{\ell=1}^m N_{V_1}(u_\ell)\right)\setminus V(F).$
Replacing $u$ by $u''$ in \(F_1\), 
we obtain a copy of $K^+(n_1,n_2,\dots,n_r)$ in $G^*$,
denoted by $F_1'$. 
Moreover, 
$F_1'$ is disjoint from $F'$. 
Hence $F_1'\cup F'$ is a copy of $tK^+(n_1,n_2,\dots,n_r)$ in $G^*$, 
a contradiction. 
Therefore, 
$G'$ is $tK^+(n_1,n_2,\dots,n_r)$-free. On the other hand, We have shown $\rho_s(G')>\rho_s(G^*)$, which contradicts the extremality of $G^*$  among all $tK^+(n_1,n_2,\dots,n_r)$-free graphs on $n$ vertices.
Hence,
$L\subseteq W.$
\end{proof}

\begin{lemma}\label{3112}
For any vertex set $S\subseteq V(G^*)$ with $|S|\leq th$,
and for each $w\in W$,
$G^*-\big((W\cup S)\setminus \{w\}\big)$ contains a copy of $K^+(n_1,n_2,\dots,n_r)$ containing $w$.
\end{lemma}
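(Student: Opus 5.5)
Since $w\in W$, say $w\in V_i$, we have $d_{V_i}(w)\geq 2\xi^3 n$. We use this large inside-part degree to supply the edge in the first part of $K^+(n_1,\dots,n_r)$. The difficulty is that $w$ may be in $L$, so its degrees to the other parts could be small. Our first step is therefore to show that $w$ nonetheless has many neighbours in every other part. We use the maximality of $\sum_{1\le i<j\le r}e(V_i,V_j)$ in the chosen partition. If $w$ had fewer than $d_{V_i}(w)$ neighbours in some $V_j$, moving $w$ to $V_j$ would strictly increase the number of crossing edges, a contradiction. Hence
\[
d_{V_j}(w)\geq d_{V_i}(w)\geq 2\xi^3 n \quad \text{for all } j\neq i .
\]
By Lemma~\ref{3991}, $L\subseteq W$, so every vertex outside $W$ lies outside $L$ and satisfies Eq.~(\ref{du}). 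Therefore $G^*-((W\cup S)\setminus\{w\})$ consists, apart from $w$ itself, of "good" vertices. These have degree at least $\left(\frac1r-\xi-O(\xi^2)\right)n$ into each foreign part and fewer than $2\xi^3 n$ neighbours inside their own part.

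Next we find the edge. The set $N_{V_i}(w)\setminus(W\cup S)$ has size at least $2\xi^3n-\xi^3n-th>\xi^3 n/2$. Pick a vertex $v$ in it; then $wv$ is the required edge inside $V_i$. Rather than a pair of vertices, we build the $K^+$ around $w$ as follows. Fix an order on the parts $j\neq i$. For each such part in turn, take the set $N_{V_j}(w)\setminus(W\cup S)$, of size at least $\xi^3 n$ up to lower-order terms. Choose inside it $n_j$ vertices that are common neighbours of $v$ and of all previously chosen good vertices. The previously chosen good vertices are at most $h$ in number, and each misses at most $\left(\frac1r+2\xi^2\right)n-\left(\frac1r-\xi-O(\xi^2)\right)n=O(\xi)n$ vertices of $V_j$, where the hidden constant depends on $r$ only.

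This is the main obstacle: the counting does not close directly. The set being intersected has size only about $\xi^3 n$, while each constraint removes up to $O(\xi)n$ vertices. The application of Lemma~\ref{Cioaba} must therefore be done more carefully than in Lemma~\ref{344}. The remedy I would try is to recall that $\epsilon=\xi^{6h}$ and to use the sharper quantities coming from Lemma~\ref{lei} with $\epsilon$. Non-$L_0$ vertices miss only $O(\sqrt{\epsilon})n=O(\xi^{3h})n$ vertices of each foreign part, which is much smaller than $\xi^3 n/h$. The bad set is then redefined with the threshold $6\sqrt\epsilon$, and I would check that Lemmas~\ref{388} and~\ref{3991} still apply, or otherwise work with the vertices whose missing degree is $O(\sqrt\epsilon n)$. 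Alternatively, we can exploit that the edges of $G^*$ between good vertices of different parts are almost complete: the number of missing edges is at most $\epsilon n^2$. A greedy/averaging choice then finds $n_j$ vertices in $N_{V_j}(w)$ that are adjacent to all earlier chosen vertices. This works because a set of size $\xi^3 n$ whose vertices miss in total at most $\epsilon n^2\ll \xi^{6}n^2$ edges contains a typical vertex missing few of them. Iterating over $j\neq i$, and finally choosing the remaining $n_i-2$ vertices in $V_i\setminus(W\cup S)$ adjacent to all chosen vertices from other parts (here we only need common neighbourhoods of good vertices, as in Lemma~\ref{344}, and $w$ need not be adjacent to them), we obtain a copy of $K^+(n_1,\dots,n_r)$ containing $w$. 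Its first part is $\{w,v\}$ together with the $n_i-2$ extra vertices, with the extra edge $wv$, and it avoids $(W\cup S)\setminus\{w\}$.

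Throughout, $|S|\le th$ is absorbed since all the sets involved have linear size. The only delicate estimate is the one above, namely that the $\xi^3 n$-sized neighbourhoods of $w$ survive intersection with common neighbourhoods of boundedly many good vertices.
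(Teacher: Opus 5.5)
Your remedy (b) is the right idea and is essentially the paper's proof. The paper bounds the total number of missing cross edges by $M=\sum_{i<j}\overline e(V_i,V_j)\le 2\epsilon n^2$. Your bound $\epsilon n^2$ also holds by maximality of the partition, but it needs its one-line derivation. The paper then plays $\epsilon=\xi^{6h}$ against $|A_j|>\xi^3 n/2$, where $A_j=N_{V_j}(w)\setminus\bigl((W\cup S)\setminus\{w\}\bigr)$. It picks $U_2,\dots,U_r$ by counting all possible choices against those containing a missing cross pair; you pick them greedily among typical vertices. Both work, and your greedy only needs $\epsilon$ small compared with $\xi^6/h$. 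The last step is the same as in the paper: take the remaining $n_i-2$ vertices of $w$'s part from the common neighbourhood of the other chosen vertices, using Lemma \ref{Cioaba}, Eq.~(\ref{du}) and $L\subseteq W$.

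One step fails as you wrote it, though. You pick the partner $v\in N_{V_i}(w)\setminus(W\cup S)$ arbitrarily, and only afterwards demand that the vertices in the other parts be neighbours of $v$. Eq.~(\ref{du}) only guarantees that $v$ has fewer than $(\xi+2(r-1)\xi^2+2\xi^3)n$ non-neighbours in each $V_j$. These can cover all of $N_{V_j}(w)$, which may have only about $2\xi^3 n$ vertices.

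The averaging therefore has to be applied to $v$ itself. The paper chooses $z\in A_i$ with $\sum_{j\neq i}|A_j\setminus N(z)|\le 2\epsilon n^2/|A_i|<4\epsilon n/\xi^3$, so each $A_j\cap N(z)$ still has more than $\xi^3 n/4$ vertices. In your greedy, every later vertex must likewise be typical with respect to all remaining sets $A_{j'}$, as your averaging principle suggests.

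Finally, drop remedy (a). With $L$ and $W$ redefined at thresholds of order $\sqrt\epsilon$, the deficit term in Lemmas \ref{388} and \ref{3991} becomes of order $\sqrt\epsilon\, n^{s-1}$. That no longer dominates the error terms $S_1,S_2$, so those arguments do not go through as written, and the redefinition is unnecessary anyway.
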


\begin{proof}
Without loss of generality,
assume that $w\in V_1$.
Since the partition $V(G^*)=V_1\cup\dots\cup V_r$ attains the maximum $\sum_{1\le i<j\le r}e(V_i,V_j)$,
moving $w$ from $V_1$ to $V_j$ cannot increase this sum.
Hence $d_{V_j}(w)\geq d_{V_1}(w)\geq 2\xi^3 n$ for $j=1,\dots,r.$
Let $X=(W\cup S)\setminus \{w\}$ and $A_j=N_{V_j}(w)\setminus X$ for $j=1,\dots,r$.
Since $|W|\le \xi^3 n$ and $|S|\le th$,
for sufficiently large $n$,
we have $|A_j|\geq 2\xi^3 n-|W|-|S|>\frac{\xi^3 n}{2}.$

Let $M=\sum_{1\leq i<j\le r}\overline e(V_i,V_j).$
Since $G^*$ can be obtained from $T_r(n)$ by adding and deleting at most $\epsilon n^2$ edges,
we have $e(G^*)\geq e(T_r(n))-\epsilon n^2.$
Moreover,
$\sum_{1\le i<j\le r}|V_i||V_j|\leq e(T_r(n))$ and $\sum_{i=1}^r e(G^*[V_i])\leq \epsilon n^2.$
Thus
\begin{align*}
M&=\sum_{1\leq i<j\leq r}\big(|V_i||V_j|-e(V_i,V_j)\big)\\
&=\sum_{1\le i<j\le r}|V_i||V_j|+\sum_{i=1}^r e(G^*[V_i])-e(G^*)\\
&\leq e(T_r(n))+\epsilon n^2-\big(e(T_r(n))-\epsilon n^2\big)\\
&=2\epsilon n^2.
\end{align*}
Consequently,
$\sum_{j=2}^r \overline e(A_1,A_j)\le M\leq 2\epsilon n^2.$

For each $u\in A_1$,
define $m(u)=\sum_{j=2}^r |A_j\setminus N(u)|.$
Then $\sum_{u\in A_1}m(u)=\sum_{j=2}^r \overline e(A_1,A_j)\leq 2\epsilon n^2.$
Hence there exists $z\in A_1$ such that
$m(z)\leq \frac{2\epsilon n^2}{|A_1|}<\frac{4\epsilon n}{\xi^3}.$
That is,
$\sum_{j=2}^r |A_j\setminus N(z)|\leq\frac{4\epsilon n}{\xi^3}.$
For $j=2,\dots,r$,
denote $C_j=A_j\cap N(z).$
Then
\begin{align*}
|C_j|&=|A_j|-|A_j\setminus N(z)|\geq|A_j|-\sum_{i=2}^r |A_i\setminus N(z)|>\frac{\xi^3 n}{2}-\frac{4\epsilon n}{\xi^3}=\left(\frac{\xi^3 }{2}-4\xi^{6h-3}\right)n>\frac{\xi^3 n}{4},
\end{align*}
where the last inequality follows from $\epsilon= \xi^{6h}$ and $\xi$ is sufficiently small .

We now choose sets $U_j\subseteq C_j,|U_j|=n_j, j=2,\dots,r$ such that every vertex of $U_i$ is adjacent to every vertex of $U_j$ for all $2\leq i<j\leq r$.
Indeed,
since $|C_j|>\xi^3 n/4$ and $n_1,\dots,n_r$ are fixed,
the total number of choices of $U_2,\dots,U_r$ is at least $c(\xi^3)^{h-n_1}n^{h-n_1},$
where $c>0$ depends only on $n_1,\dots,n_r$.
On the other hand,
the number of choices for which there exist $2\le i<j\le r,u\in U_i$,
and $v\in U_j$ such that $uv\notin E(G^*)$ is at most $\sum_{2\le i<j\le r}\overline e(C_i,C_j)\cdot C_0 n^{h-n_1-2}.$
Since $\sum_{2\le i<j\le r}\overline e(C_i,C_j)\le M\le 2\epsilon n^2,$
this number is at most $C\epsilon n^{h-n_1},$
where $C_0,C$ are positive constants. 
Since $\epsilon\le \xi^{6h}$ and $\xi$ is sufficiently small,
we have $C\epsilon = C\xi^{6h}<c(\xi^3)^{h-n_1}$.
Therefore there exists a choice of $U_2,\dots,U_r$ such that every vertex of
$U_i$ is adjacent to every vertex of $U_j$ whenever $2\le i<j\le r$.

Let $U=\bigcup_{j=2}^r U_j$ and $m=|U|=n_2+\dots+n_r=h-n_1.$
Since \(U_j\subseteq C_j\subseteq A_j\cap N(z)\), 
both \(w\) and \(z\) are adjacent to every vertex of \(U\). 
Moreover, 
the sets \(U_2,\dots,U_r\) are pairwise completely joined.
It remains to choose $n_1-2$ additional vertices from $V_1$.
Since $U\cap W=\emptyset$ and $L\setminus W=\emptyset$,
we have $U\cap L=\emptyset$.
Thus,
for every $y\in U$,
if $y\in V_j$ with $j\in\{2,\dots,r\}$,
then $d_{G^*}(y)>\left(1-\frac1r-\xi\right)n.$
Since $y\notin W$,
we have $d_{V_j}(y)<2\xi^3 n.$
Since $|V_\ell|\le \frac nr+2\xi^2\,n,$
we have
\begin{equation}\label{7}
\begin{split}
d_{V_1}(y)&\geq d_{G^*}(y)-d_{V_j}(y)-\sum_{\ell\neq 1,j}|V_\ell|\\
&>\left(1-\frac1r-\xi\right)n-2\xi^3 n-(r-2)\left(\frac {n}{r}+2\xi^2\,n\right)\\
&=\left(\frac 1r-\xi-2(r-2)\xi^2 -2\xi^3 \right)n.
\end{split}
\end{equation}
For the number of common neighbors of $U$ in $V_1\setminus (X\cup\{w,z\})$,
by Lemma \ref{Cioaba} and Eq. (\ref{7}),
we have
\begin{align*}
\left|\bigcap_{y\in U}N_{V_1}(y)\setminus (X\cup\{w,z\})\right|&\geq\sum_{y\in U}d_{V_1}(y)-(m-1)|V_1|-|X|-2\\
&>\frac nr-\big(m\xi+(2m+1)\xi^3+(2m(r-1)-2)\xi^2\big)n-th-2\\
&\geq\left(\frac {1}{r}-h\xi-2h(r-1)\xi^2-(2h+1)\xi^3\right)n-th-2\\
&>n_1-2.
\end{align*}
Hence we may choose a set $U_1\subseteq\bigcap_{y\in U}N_{V_1}(y)\setminus (X\cup\{w,z\})$ with $|U_1|=n_1-2.$

Now consider the vertex set $\{w,z\}\cup U_1\cup (\bigcup_{j=2}^r U_j).$
Since $z\in A_1\subseteq N_{V_1}(w)$,
we have $wz\in E(G^*)$.
Both $w$ and $z$ are adjacent to every vertex of $U$,
every vertex of $U_1$ is adjacent to every vertex of $U$,
and the sets $U_2,\dots,U_r$ are pairwise completely joined.
Therefore this vertex set contains a copy of $K^+(n_1,n_2,\dots,n_r)$,
where $wz$ is the extra internal edge in the first part.
All chosen vertices are not contained in $X$.
Hence we obtain a copy of $K^+(n_1,n_2,\dots,n_r)$ in $G^*-\big((W\cup S)\setminus \{w\}\big)$.
\end{proof}

\begin{lemma}\label{312}
Let $H=\bigcup_{i=1}^rG^*[V_i\setminus W]$.
Then the matching number $\mu(H)\leq t-1-|W|$.
\end{lemma}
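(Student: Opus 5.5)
By contradiction, in the same greedy style as the proof of Lemma \ref{3441}, now using Lemma \ref{3112} to embed copies through the vertices of $W$. Suppose $\mu(H)\geq t-|W|$. Since $\mu(H)\geq 0$, this also covers the case $|W|\geq t$, where the claim reads $\mu(H)\le t-1-|W|<0$. So put $k=\min\{|W|,t\}$. We will build $t$ vertex-disjoint copies of $K^+(n_1,\dots,n_r)$ in $G^*$: $k$ of them through distinct vertices of $W$, and $t-k$ of them through the edges of a matching in $H$. This contradicts the choice of $G^*$.

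\textbf{Step 1 (copies through $W$).} Choose distinct $w_1,\dots,w_k\in W$. If $k<t$, also choose $t-k$ disjoint edges $e_{k+1},\dots,e_t$ of $H$, which exist since $\mu(H)\ge t-|W|=t-k$. Process $w_1,\dots,w_k$ in order. At step $q$, let
$$S_q=\Big(\bigcup_{i<q}V(F_i)\Big)\cup\Big(\bigcup_{j>k}V(e_j)\Big).$$
Then $|S_q|\le (q-1)h+2(t-k)\le th$. Lemma \ref{3112} applied to $w_q$ and $S_q$ gives a copy $F_q$ of $K^+(n_1,\dots,n_r)$ containing $w_q$ and avoiding $(W\cup S_q)\setminus\{w_q\}$. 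Hence $F_q$ meets $W$ only in $w_q$, so it avoids the later $w_{q'}$. It also avoids the earlier copies and the reserved matching edges.

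\textbf{Step 2 (copies through matching edges).} Process $e_q$ for $q=k+1,\dots,t$. Let $S_q$ consist of the vertices of $F_1,\dots,F_{q-1}$ together with the endpoints of $e_{q+1},\dots,e_t$. Then $|S_q|\le (q-1)h+2(t-q)\le (t-1)h$. By Lemma \ref{3991} we have $L\subseteq W$, so $H$ equals $\bigcup_i G^*[V_i\setminus(L\cup W)]$. Thus $e_q$ is an edge of $G^*[V_i\setminus(L\cup W\cup S_q)]$ for some $i$. Lemma \ref{344} then gives a copy $F_q$ of $K^+(n_1,\dots,n_r)$ containing $e_q$ inside $G^*-(L\cup W\cup S_q)$. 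This copy avoids $W$, hence avoids all the $w_j$, and it avoids everything built or reserved so far. After both steps we have $t$ disjoint copies, i.e.\ $tK^+(n_1,\dots,n_r)\subseteq G^*$, which is a contradiction. Therefore $\mu(H)\le t-1-|W|$.

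\textbf{Main obstacle.} The delicate point is the size bookkeeping for $S$. The set passed to Lemma \ref{3112} must have size at most $th$, and the set passed to Lemma \ref{344} at most $(t-1)h$. Also, each copy produced in Step 1 must avoid the other $W$-vertices still to be used. Lemma \ref{3112} provides exactly this, because it excludes all of $W\setminus\{w\}$.

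The identification of $H$ with $\bigcup_i G^*[V_i\setminus(L\cup W)]$ via Lemma \ref{3991} is also essential. Lemma \ref{344} only handles edges avoiding $L$, and without $L\subseteq W$ the edges of $H$ could meet $L$.
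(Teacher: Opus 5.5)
Your proof is correct and follows the paper's argument: a greedy construction of $t$ disjoint copies of $K^+(n_1,\dots,n_r)$, using Lemma \ref{344} for the matching edges and Lemma \ref{3112} for the vertices of $W$. The differences are only organizational. The paper proves $|W|\le t-1$ as a separate claim and builds the edge-copies before the $W$-copies, so it never has to reserve matching endpoints when it invokes Lemma \ref{3112}. You instead merge both cases via $k=\min\{|W|,t\}$ and build the $W$-copies first with the matching endpoints reserved; the bound $|S|\le th$ in Lemma \ref{3112} allows this since $h\ge 2$. You also make explicit the use of $L\subseteq W$, which the paper leaves implicit.
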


\begin{proof}
Firstly,
we claim that $|W|\leq t-1$.
Otherwise,
$|W|\geq t$,
then there exist at least $t$ vertices $w_1,\dots,w_t\in W$.
By Lemma \ref{3112},
we obtain $t$ vertex-disjoint copies of $K^+(n_1,n_2,\dots,n_r)$ in $G^*$,
a contradiction.

Suppose to the contrary that $\mu(H)\geq t-|W|$.
Then there exists $t-|W|$ disjoint edges $e_1,\dots,e_{t-|W|}\in E(H)$.
We first construct $t-|W|$ disjoint copies of $K^+(n_1,n_2,\dots,n_r)$,
denoted by $F_1,\dots,F_{t-|W|}$,
all disjoint from $W$,
such that each $F_i$ contains the edge $e_i$,
where $i\in \{1,\dots,t-|W|\}$.
By induction on $t-|W|$,
we assume that $F_1,\dots,F_{i-1}$ have been chosen.
Let $X_i= \left(\bigcup_{j=1}^{i-1}V(F_j)\right)\cup \left(\bigcup_{\ell=i+1}^{t-|W|}V(e_{\ell})\right)$.
Then $|X_i|\leq  (t-1)h.$
Since the edges $e_1,\dots,e_{t-|W|}$ are disjoint,
the edge $e_i$ is disjoint from $W\cup X_i$.
Hence,
by Lemma \ref{344},
we obtain a copy of $K^+(n_1,n_2,\dots,n_r)$ in $G^*-(W\cup X_i)$ containing $e_i$,
denoted by $F_i$.
Thus we obtain $t-|W|$ disjoint copies of $K^+(n_1,n_2,\dots,n_r)$,
denoted by $F_1,\dots,F_{t-|W|}$,
all disjoint from $W$.

Let $W=\{w_1,\dots,w_{|W|}\}$.
Next we construct $|W|$ disjoint copies of $K^+(n_1,n_2,\dots,n_r)$,
one containing each $w_k$.
Suppose that $F_1,\dots,F_{t-|W|+k-1}$ have already been chosen.
Let $Y_k=\bigcup_{j=1}^{t-|W|+k-1}V(F_j)$.
Then $|Y_k|\leq (t-|W|+k-1)h\leq (t-1)h.$
Hence,
by Lemma \ref{3112},
the graph $G^*-\big((W\cup Y_k)\setminus \{w_k\}\big)$ contains a copy of $K^+(n_1,n_2,\dots,n_r)$ containing $w_k$.
Hence,
we obtain $t-|W|+|W|=t$ disjoint copies of $K^+(n_1,n_2,\dots,n_r)$ in $G^*$,
a contradiction.
Therefore,
$\mu(H)\le t-1-|W|.$
\end{proof}

\begin{lemma}\label{3221}
$G^*$ is $tK_{r+1}$-free.
\end{lemma}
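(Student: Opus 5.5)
The plan is to derive the statement directly from Lemma \ref{312}, which already contains the two quantitative facts we need: $|W|\leq t-1$, and the matching number of $H=\bigcup_{i=1}^rG^*[V_i\setminus W]$ is at most $t-1-|W|$. No spectral argument should be necessary. The point is that any copy of $K_{r+1}$ in $G^*$ must either meet $W$ or use an edge of $H$.

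First, I will check this structural claim. Let $K$ be a copy of $K_{r+1}$ in $G^*$. Since $V(G^*)=V_1\cup\dots\cup V_r$ has only $r$ parts and $K$ has $r+1$ vertices, the pigeonhole principle gives two vertices $a,b\in V(K)$ in the same part $V_i$, and $ab\in E(G^*)$ because $K$ is complete. If $K\cap W=\emptyset$, then $a,b\in V_i\setminus W$, so $ab\in E(H)$. Hence every $K_{r+1}$ either contains a vertex of $W$ or contains an edge of $H$.

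Next, suppose to the contrary that $G^*$ contains $t$ vertex-disjoint copies $K^{(1)},\dots,K^{(t)}$ of $K_{r+1}$. Let $a$ be the number of these copies that meet $W$. Since the copies are vertex-disjoint, distinct copies meet $W$ in distinct vertices, so $a\leq |W|$. Each of the remaining $t-a$ copies avoids $W$, so by the claim it contains an edge of $H$; choose one such edge from each. These $t-a$ edges lie in vertex-disjoint cliques, so they form a matching in $H$. Therefore
$$\mu(H)\geq t-a\geq t-|W|,$$
which contradicts $\mu(H)\leq t-1-|W|$ from Lemma \ref{312}. Hence $G^*$ is $tK_{r+1}$-free.

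The only step needing care is the counting in the final paragraph. One must use disjointness twice: copies meeting $W$ consume distinct vertices of $W$, and the selected edges of $H$ are pairwise disjoint. The rest of the argument is immediate from Lemma \ref{312}.
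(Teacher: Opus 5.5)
Your proposal is correct and follows essentially the same argument as the paper: pigeonhole gives an edge inside some $V_i$ in each copy of $K_{r+1}$, at most $|W|$ disjoint copies can meet $W$, and the remaining at least $t-|W|$ copies yield a matching in $H$ of size $t-|W|$, contradicting Lemma \ref{312}.
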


\begin{proof}
Suppose to the contrary that $G^*$ contains $t$ disjoint copies of $K_{r+1}$,
denoted by $F_1,F_2,\dots,F_t$.
Since $V(G^*)=V_1\cup\dots\cup V_r$ is an $r$-partition of $V(G^*)$,
each $F_j$ has $r+1$ vertices distributed among the $r$ parts $V_1,\dots,V_r$.
Hence,
there exist two vertices of $F_j$ that belong to the same part for $j\in\{1,2,\dots,t\}$.
Since $F_j$ is a clique,
these two vertices are adjacent.

Since the copies $F_1,F_2,\dots,F_t$ are disjoint,
at most $|W|$ of them have a vertex in $W$.
Therefore at least $t-|W|$ of these copies are disjoint from $W$.
For each copy $F_j$ which is disjoint from $W$,
choose an edge of $F_j$ whose two endpoints belong to the same part.
Then all the chosen edges are contained in $H=\bigcup_{i=1}^r G^*[V_i\setminus W].$
Moreover,
since the copies $F_1,F_2,\dots,F_t$ are disjoint,
these chosen edges are disjoint.
Hence these edges form a matching in $H$ of size at least $t-|W|$.
Consequently,
$\mu(H)\ge t-|W|,$
contradiction with Lemma \ref{312}.
Therefore,
$G^*$ is $tK_{r+1}$-free.
\end{proof}

\begin{lemma}\label{322}
$G^*$ is $s$-clique connected.
\end{lemma}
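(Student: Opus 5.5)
We argue by contradiction and use the standard ``move a component'' trick. Suppose $G^*$ is not $s$-clique connected. By Lemma \ref{Lemma4} and Lemma \ref{dangqiejindang}, there is an $s$-clique connected component $H_1$ of $G^*$ with $\rho_s(G^*)=\rho_s(H_1)$. The first step is to show that $H_1$ is essentially all of $G^*$ and that every other $s$-clique connected component is tiny.

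Since $\rho_s(H_1)\ge \binom{r-1}{s-1}(n/r)^{s-1}+cn^{s-2}$ by Lemma \ref{311}, and a graph on $m$ vertices has $\rho_s\le \binom{m-1}{s-1}$ (each row sum of the clique tensor is at most this), we get $|V(H_1)|=\Omega(n)$. In fact we use the stability structure. Every vertex $v\notin W$ has $d_{G^*}(v)>(1-\frac1r-\xi)n$ (because $L\subseteq W$ by Lemma \ref{3991}). By the common-neighbourhood estimates of Lemma \ref{344} (Eq. (\ref{du})), such a $v\in V_i$ has linearly many neighbours in every $V_j$, $j\ne i$. It therefore lies in an $s$-clique $\{v,u_2,\dots,u_s\}$ with $u_j$ taken from distinct other parts and outside $W$.

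Any two vertices outside $W$ are joined by an $s$-clique walk. Take $u\in V_i$ and $v\in V_j$ outside $W$. For $i\ne j$, use Lemma \ref{Cioaba} to find an $s$-clique through $u$ and through a common neighbour structure reaching $v$. For example, pick a vertex of a third part (or of $V_j$ when $s=2$) adjacent to both and extend greedily as in Lemma \ref{344}. The case $i=j$ is handled by passing through a vertex of another part. Hence all of $V(G^*)\setminus W$ lies in a single $s$-clique connected component, which must be $H_1$ since it carries almost all vertices.

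It remains to handle vertices of $W$, and the case $s\ge 2$ is uniform. By Lemma \ref{312} we have $|W|\le t-1$. Each $w\in W$ has, by the maximality of the partition (as in the proof of Lemma \ref{3112}), $d_{V_j}(w)\ge 2\xi^3 n$ for all $j$. The argument of Lemma \ref{3112} then produces a $K^+(n_1,\dots,n_r)$ through $w$ whose other vertices lie outside $W$. That copy contains a $K_r\supseteq K_s$ through $w$ together with vertices of $V(G^*)\setminus W$. Therefore $w$ lies in an $s$-clique sharing a vertex with $H_1$, so $w\in V(H_1)$. Consequently $\tilde G^*$ is connected, i.e. $G^*$ is $s$-clique connected.

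The main obstacle is the clique-walk connectivity inside $V(G^*)\setminus W$, in particular for vertices inside the same part and for $s=r$. There an $s$-clique must use one vertex from each of $s$ distinct parts, and we must ensure that the greedy common-neighbour extension stays in $V\setminus W$. I would handle it uniformly by showing that any two $s$-cliques of $G^*-W$ that use at least one vertex in each of the same parts can be linked through a shared vertex. Lemma \ref{Cioaba} with the degree bounds $d_{V_j}(u)>(\frac1r-\xi-2(r-2)\xi^2-2\xi^3)n$ shows that any $r$ vertices from distinct parts outside $W$ have $\Omega(n)$ common neighbours in the remaining part, which suffices to build the required overlapping cliques.
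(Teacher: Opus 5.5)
Your argument is correct in substance, but it takes a genuinely different route from the paper.

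\textbf{The paper's route.} The paper never uses the partition here. It assumes an $s$-clique connected component $H$ with $\rho_s(H)=\rho_s(G^*)$ and a vertex $j\notin V(H)$. It chooses $c_s\in C_s(H)$ maximizing $k=|N(j)\cap c_s|$, so $k\le s-2$. It then adds the $s-1-k$ edges from $j$ needed to create a new $s$-clique meeting $H$, and gets $\rho_s(G')>\rho_s(G^*)$ from Lemma~\ref{Lemma3}. The maximality of $k$ shows that no new edge lies in a $K_{s+1}$, hence in no $K_{r+1}$. So $G'$ inherits $tK_{r+1}$-freeness from Lemma~\ref{3221}, contradicting extremality.

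\textbf{Your route.} You build the clique walks explicitly.
\begin{itemize}
\item By Lemma~\ref{3991} and Eq.~(\ref{du}), every vertex of $V(G^*)\setminus W$ has linear degree into each other part.
\item By Lemma~\ref{Cioaba}, every cross-part edge of $G^*-W$ then extends greedily to an $r$-clique avoiding $W$. This reduces the problem to ordinary connectivity of the cross-part graph on $V(G^*)\setminus W$, which has diameter at most $3$ via common neighbours.
\item Lemma~\ref{3112} with $S=\emptyset$ puts each $w\in W$ in a $K_r$ whose other vertices lie outside $W$.
\end{itemize}

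\textbf{Comparison.} Your route needs neither Lemma~\ref{312} nor Lemma~\ref{3221}. The $\Omega(n)$ bound on $|V(H_1)|$ is superfluous, since you establish connectivity outright. Your route also yields the stronger fact that $G^*$ is $r$-clique connected. The paper's route is shorter and purely extremal, and needs no quantitative estimates at this stage.

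\textbf{Fixes when writing yours out.} The middle step is only gestured at in your proposal, so state the edge-extension reduction above explicitly. Also correct three slips:
\begin{itemize}
\item The exceptional case among distinct parts is $r=2$, not $s=2$. There, for $u\in V_1$ and $v\in V_2$, a vertex of $V_2$ adjacent to $v$ would need an edge inside a part, which is unavailable in general. Use instead the path $u,a,b,v$, where $a\in N_{V_2}(u)\setminus W$ and $b$ is a common neighbour of $a$ and $v$ in $V_1\setminus W$.
\item It is $r-1$ vertices from distinct parts, not $r$, that have $\Omega(n)$ common neighbours in the remaining part.
\item The identification ``which must be $H_1$'' is not needed at all.
\end{itemize}
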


\begin{proof}
Suppose that $G^*$ is not $s$-clique connected.
Then there exists an $s$-clique connected component $H$ of $G^*$ such that $ \rho_s(G^*)=\rho_s(H)$.
For $j\in V(G^*)\setminus V(H)$,
choose an $s$-clique $c_s=\{u_1,u_2,\dots,u_s\}\in C_s(H)$ such that $|N(j)\cap c_s|$ is maximum.
Assume that $u_1,\dots,u_k\in N(j)$ and $u_{k+1},\dots,u_s\notin N(j)$.
Since $j\notin V(H)$,
we have $k\leq s-2.$
Let $G'$ be obtained from $G^*$ by adding the edges $ju_{k+1},\dots,ju_{s-1}$.
Then $G'[V(H)\cup \{j\}]$ is $s$-clique connected, 
and by Lemma \ref{Lemma3},
we have $\rho_s(G')\ge \rho_s\bigl(G'[V(H)\cup\{j\}]\bigr)>\rho_s(H)=\rho_s(G^*).$

We claim that $G'$ is $tK_{r+1}$-free.
Otherwise,
there exists a copy $F_0$ of $K_{r+1}$ containing an edge \(ju_q\) for some $q\in \{k+1,\dots,s-1\}$.
Since $F_0\cong K_{r+1}$ and $s\le r$,
the edge $ju_q$ is contained in a copy $F$ of $K_{s+1}$ in $F_0$.
Let $c_s'=V(F)\setminus\{j\}.$
Then $c_s'$ is an $s$-clique of $G^*$.
Moreover,
$u_q\in c_s'\cap c_s$.
Hence, $c_s'\in C_s(H)$.
In $G'$,
every vertex of $c_s'$ is adjacent to $j$.
The new added edges incident with $j$ are $ju_{k+1},ju_{k+2},\dots,ju_{s-1}.$
Thus at most $s-1-k$ vertices of $c_s'$ are not adjacent to $j$ in $G^*$.
Therefore $|N(j)\cap c_s'|\ge s-(s-1-k)=k+1,$
which contradicts the choice of $c_s$.
Therefore, 
$G'$ is $tK_{r+1}$-free.
Since every copy of $K^+(n_1,\dots,n_r)$ contains a copy of $K_{r+1}$,
the graph $G'$ is also $tK^+(n_1,\dots,n_r)$-free and $\rho_s(G')>\rho_s(G^*)$,
which contradicts the maximality of $G^*$.
Hence,
$G^*$ is $s$-clique connected.
\end{proof}

By Lemma \ref{322},
the graph $G^*$ is $s$-clique connected.
Thus,
the $s$-clique tensor $\mathcal{A}_s(G^*)$ is weakly irreducible by Lemma \ref{dangqiejindang}.
By Lemma \ref{Lemma2},
there exists a positive eigenvector  corresponding to $\rho_s(G^*)$.

\begin{lemma}\label{310}
For vertex $v\in V (G^*)$ and the positive eigenvector $x = (x_1,x_2,\dots, x_n)^{\top}$ corresponding to $\rho_s(G^*)$ with $\max_{v\in V(G^*)}x_v=1$,
we have 
$$x_v^{s-1}\geq \frac{(s-1)!}{16}{r-1\choose s-1}\frac{1}{r^{s-1}}.$$
\end{lemma}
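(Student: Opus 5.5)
Since $\max_v x_v=1$, Lemma~\ref{377} gives $x_{v_0}^{s-1}\ge \frac{(s-1)!}{4}\binom{r-1}{s-1}r^{-(s-1)}$, where $x_{v_0}=\max\{x_v: v\notin W\}$. The plan is to show that every vertex $v$ has eigen-equation sum at least a constant fraction of $\rho_s(G^*)x_{v_0}^{s-1}$. Dividing by $\rho_s(G^*)$ then gives $x_v^{s-1}\ge \frac14 x_{v_0}^{s-1}$, which is the stated bound. The comparison will be made through the $s$-cliques of $v_0$ counted in $S_4$.

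\emph{Step 1: a lower bound for $S_4$.} Assume $v_0\in V_1$. By Lemma~\ref{388}, $v_0\notin L$. As in the proof of Lemma~\ref{3991},
\[
S_4=\sum_{c_s\in C_4}\prod_{w\in c_s\setminus\{v_0\}}x_w\;>\;\rho_s(G^*)x_{v_0}^{s-1}-(\xi^2+3\xi^3)n^{s-1}-2(t-1)(r-1)n^{s-2}.
\]
By Lemmas~\ref{311} and~\ref{377}, the error terms are at most $\frac12\rho_s(G^*)x_{v_0}^{s-1}$ for small $\xi$ and large $n$. Hence $S_4\ge \frac12\rho_s(G^*)x_{v_0}^{s-1}$, and this sum runs over $(s-1)$-cliques inside $N(v_0)\cap\bigcup_{i\ge2}I_i$.

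\emph{Step 2: vertices in $W$.} Take $v\in W$, say $v\in V_k$. The maximality of the partition (as in Lemma~\ref{3112}) only gives $d_{V_j}(v)\ge 2\xi^3 n$, which is too weak. Instead I would use the extremality of $G^*$. If $x_v$ were small, one could delete all edges at $v$ and join $v$ to $\bigcup_{i\ne k}I_i$. Repeating the computation of Lemma~\ref{3991} (Eq.~(\ref{000}) with a positive eigenvector) shows that $\rho_s$ strictly increases whenever $\rho_s(G^*)x_v^{s-1}<S_4'$, where $S_4'$ is the $S_4$-type sum for the new neighbourhood. The tricky point is that the argument that $G'$ stays $tK^+$-free used $v\notin W$. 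Here I would use Lemma~\ref{312} and $|W|\le t-1$: the modified graph is $tK_{r+1}$-free-type controlled as in Lemma~\ref{3221}, so $v$ does not create new disjoint copies. I expect this step to be the main obstacle. A cleaner alternative is to show directly that $v\in W$ is adjacent to almost all of $\bigcup_{i\ne k} V_i$; this holds for the extremal graph, since $W$ will be the $K_{t-1}$ part.

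\emph{Step 3: vertices outside $W$.} Take $v\notin W$, so $v\notin L$ by Lemma~\ref{3991}, and say $v\in V_k$. Then $d_{V_j}(v)>(\frac1r-\xi-O(\xi^2))n$ for each $j\ne k$, by Eq.~(\ref{du}). So $v$ misses at most $O(\xi n)$ vertices of each $V_j$, $j\neq k$. For the case $k=1$, consider the $(s-1)$-cliques of $C_4$ (from $v_0$) all of whose vertices lie in $N(v)$. Those containing a non-neighbour of $v$ contribute at most $r\xi n\cdot n^{s-2}$. Hence
\[
\rho_s(G^*)x_v^{s-1}\ge S_4-O(\xi)n^{s-1}\ge \tfrac14\rho_s(G^*)x_{v_0}^{s-1}.
\]
If $k\ne1$, I would symmetrize first. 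Let $v_0^{(k)}$ maximize $x$ over $V_k\setminus W$, and bound $x_{v_0^{(k)}}$ below by applying the Step~1 argument to $v_0$ with cliques avoiding $V_k$. Alternatively, compare $v$ with $v_0$ using cliques in $\bigcup_{i\ne 1,k}I_i$ together with one vertex of $I_1$; those of $I_k$ are handled by symmetry. The constant loss from $\frac12$ to $\frac14$ absorbs this.

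Combining the steps gives $x_v^{s-1}\ge \frac14 x_{v_0}^{s-1}\ge \frac{(s-1)!}{16}\binom{r-1}{s-1}r^{-(s-1)}$ for all $v$.
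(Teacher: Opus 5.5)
Your Step~3 works when $v$ lies in the same part as $v_0$. There is a genuine gap when $v\in V_k\setminus W$ with $k\neq 1$ (here $v_0\in V_1$). Because $v\notin W$, $v$ has fewer than $2\xi^3 n$ neighbours in $V_k$. So almost none of the $(s-1)$-cliques counted in $S_4$ that meet $I_k$ lie in $N(v)$. When $s=r$, every $(s-1)$-clique counted in $S_4$ has exactly one vertex in each of $I_2,\dots,I_r$, since the $I_i$ are independent, so all of them meet $I_k$. Already for $s=r=2$, $S_4=\sum_{u\in N(v_0)\cap I_2}x_u$, while $v\in V_2$ sees almost nothing of $I_2$.

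None of your remedies closes this gap:
\begin{itemize}
\item The cliques avoiding $V_k$ carry a share of $S_4$ you have no control over. Even in a perfectly balanced situation that share is only about $\frac{r-s}{r-1}$, and it is zero when $s=r$.
\item Replacing the $I_k$-vertex by an $I_1$-vertex presupposes that $x$ is comparable on $I_1$ and $I_k$, which is essentially what the lemma asserts.
\item Nothing at this stage makes the parts of $G^*$ symmetric.
\end{itemize}
The deficit is of the same order as $\rho_s(G^*)x_{v_0}^{s-1}$. It is not a lower-order error that the drop from $\frac12$ to $\frac14$ can absorb.

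The missing idea is that the switching argument you sketch in Step~2 handles every vertex at once, provided the new neighbourhood is tied to the part of $v_0$ rather than to that of $v$. Suppose $x_u^{s-1}<\frac14 x_{v_0}^{s-1}$. Delete all edges at $u$ and join $u$ to $(\bigcup_{i\neq 1}I_i)\setminus\{u\}$. Then $(\mathcal A_s(G')x^{s-1})_u\ge S_4-n^{s-2}\ge \frac14\rho_s(G^*)x_{v_0}^{s-1}>\rho_s(G^*)x_u^{s-1}=(\mathcal A_s(G^*)x^{s-1})_u$, and $x_u>0$ gives $\rho_s(G')>\rho_s(G^*)$. By contrast, joining $v\in V_k$ to $\bigcup_{i\neq k}I_i$, as you propose, leaves $S_4'$ unbounded by Step~1 unless $k=1$.

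Your worry that $tK^+(n_1,\dots,n_r)$-freeness of $G'$ needs $u\notin W$ is unfounded. The replacement argument in Lemma~\ref{3991} only uses that the new neighbours of $u$ lie in $\bigcup_{i\ge 2}I_i$, so they have more than $th$ common neighbours in $V_1$, and $u$ can be swapped for some $u''\notin V(F)$. Neither the part containing $u$ nor whether $u\in W$ plays any role. Your Lemma~\ref{3221}-style argument would also work for $u\in W$, since edits at a vertex of $W$ leave $\bigcup_i G^*[V_i\setminus W]$ unchanged, but it is not needed.

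Your fallback of appealing to $W$ ``becoming'' the $K_{t-1}$ part is circular as written. The argument of Lemma~\ref{314} does not use the present lemma and could be run first to make it legitimate, but you do not do this. The single switching argument above, applied to an arbitrary vertex with no case distinction, is exactly the paper's proof.
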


\begin{proof}
Recall that $x_{v_0}=\max\{x_v\ | \ v\in V(G^*)\setminus W\}$.
Suppose that $v_0\in V_k,k\in \{1,2,\dots,r\}$.
By Lemma \ref{377},
we have $x_{v_0}^{s-1}\geq
\frac{(s-1)!}{4}\binom{r-1}{s-1}\frac{1}{r^{s-1}}$.

Suppose to the contrary that there exists a vertex $u\in V(G^*)$ such that $x_u^{s-1}<\frac{(s-1)!}{16}{r-1\choose s-1}(\frac{1}{r})^{s-1}$,
then $x_u^{s-1}<\frac{x_{v_0}^{s-1}}{4}$.
Construct a graph $G'$ from $G^*$ by deleting all edges incident with $u$,
and then joining $u$ to every vertex in $\left(\bigcup_{\substack{i=1, i\neq k}}^r I_i\right)\setminus \{u\}.$
Similar to the proof of Lemma \ref{3991},
$G'$ is $tK^+(n_1,n_2,\dots,n_r)$-free.
Moreover,
for sufficiently large $n$,
$\left(\mathcal A_s(G')x^{s-1}\right)_u\geq\rho_s(G^*)x_{v_0}^{s-1}-(\xi^2+3\xi^3)n^{s-1}-O(n^{s-2}).$
Since $\xi$ is sufficiently small and $n$ is sufficiently large,
we have $$(\xi^2+3\xi^3)n^{s-1}+O(n^{s-2})\leq\frac{1}{2}\rho_s(G^*)x_{v_0}^{s-1}.$$
Hence $\left(\mathcal A_s(G')x^{s-1}\right)_u\geq\frac{1}{2}\rho_s(G^*)x_{v_0}^{s-1}>\rho_s(G^*)x_u^{s-1}=\left(\mathcal A_s(G^*)x^{s-1}\right)_u.$
Since $G'$ is obtained from $G^*$ by changing edges incident with $u$,
all changed $s$-cliques contain $u$.
Hence
$$x^{\top}\bigl(\mathcal A_s(G')-\mathcal A_s(G^*)\bigr)x^{s-1}=s x_u\Big(\left(\mathcal A_s(G')x^{s-1}\right)_u-\left(\mathcal A_s(G^*)x^{s-1}\right)_u\Big).$$
Since $x_u>0$,
it follows that $x^{\top}\bigl(\mathcal A_s(G')-\mathcal A_s(G^*)\bigr)x^{s-1}>0.$
Therefore,
\begin{align*}
\rho_s(G')-\rho_s(G^*)&\ge
\frac{x^{\top}\bigl(\mathcal A_s(G')-\mathcal A_s(G^*)\bigr)x^{s-1}}{||x||_s^s}>0.
\end{align*}
Thus $\rho_s(G')>\rho_s(G^*)$,
which contradicts the maximality of $G^*$.
Thus for each $v\in V(G^*)$ we have $x_v^{s-1}\geq \frac{(s-1)!}{16}{r-1\choose s-1}(\frac{1}{r})^{s-1}$ .
\end{proof}

\begin{lemma}\label{355}
For each $i\in \{1,2,\dots,r\}$,
we have $\Delta(G^*[V_i\setminus W])<th$.
\end{lemma}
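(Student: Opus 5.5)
We argue by contradiction: suppose some vertex $u\in V_i\setminus W$ satisfies $d_{V_i\setminus W}(u)\ge th$. The plan is to use the many neighbours of $u$ inside $V_i$ to build $t$ disjoint copies of $K^+(n_1,\dots,n_r)$, mostly from matching edges, and so contradict either Lemma \ref{312} or the $tK^+(n_1,\dots,n_r)$-freeness of $G^*$. Since $L\subseteq W$ by Lemma \ref{3991}, every vertex of $V_i\setminus W$ lies outside $L\cup W$, so Lemma \ref{344} applies to every edge of $G^*[V_i\setminus W]$.

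\textbf{Step 1: the matching.} By Lemma \ref{312}, $H=\bigcup_{j=1}^rG^*[V_j\setminus W]$ has a maximum matching $M$ with $|M|\le t-1-|W|$. Its vertex set satisfies $|V(M)|\le 2(t-1)$. Since $h\ge 3$ (because $n_1\ge2$ and $r\ge2$), we get $th>2(t-1)+1$. Hence $u$ has a neighbour $v\in V_i\setminus W$ with $v\notin V(M)$. If $u\notin V(M)$, then $M\cup\{uv\}$ is a larger matching in $H$, which is an immediate contradiction. So we may assume $u$ is covered by $M$, say by the edge $uu'\in M$.

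\textbf{Step 2: exploiting the high degree of $u$.} Replace $uu'$ by $uv$ to get a new matching $M'$ of the same size. For the remaining edges of $M'$, together with one extra structure, we build disjoint copies of $K^+$ greedily, exactly as in the proof of Lemma \ref{312}:
\begin{enumerate}[label=(\roman*)]
\item each edge of $M'\setminus\{uv\}$ gets its own copy of $K^+$ via Lemma \ref{344}, with $S$ the union of the previously chosen vertices, which has size at most $(t-1)h$;
\item each $w\in W$ gets its own copy of $K^+$ via Lemma \ref{3112}.
\end{enumerate}
This produces $t-1$ disjoint copies of $K^+$ together with the edge $uv$, so we need one more copy. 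The real point is that $u$ has at least $th$ neighbours in $V_i\setminus W$. After removing the at most $(t-1)h$ vertices used by the other copies, $u$ still has at least $h$ neighbours $v_1,\dots,v_h$ in $V_i\setminus W$. Therefore we can afford to use $u$ together with a whole star in its own part: the star contributes the edge $uv$ for one copy of $K^+$, while the vertex $u'$ is freed. If $u'$ has a neighbour in $V_{i'}\setminus W$ outside the used vertices, that yields a matching of size $|M|+1$ in $H$, contradicting Lemma \ref{312}.

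\textbf{Step 3: the case where the counting must be done jointly.} Otherwise the cleaner route is to count directly. We take copies of $K^+$ on the edges of $M\setminus\{uu'\}$ and on the vertices of $W$, for $t-1$ copies in total. We then need one more copy, through an edge $uv_\ell$ that avoids all used vertices; this exists because $d_{V_i\setminus W}(u)\ge th$ exceeds the number of used vertices. Applying Lemma \ref{344} with $S$ equal to the union of the used vertices, the edge $uv_\ell$ extends to a copy of $K^+$ disjoint from the others, giving the contradiction of $t$ disjoint copies.

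The count at this point is $(t-1-|W|)+|W|+1=t$ copies, provided the copy through $u$ is built \emph{last}. Building it last is exactly why the degree bound $th$ is needed: it guarantees a fresh neighbour after the first $t-1$ copies have been placed.

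\textbf{Main obstacle.} Lemma \ref{312} only bounds $|M|\le t-1-|W|$, so $|M|$ may be strictly smaller. In that case we must first apply the matching bound as in Step 1 to conclude $u\in V(M)$, and then drop the edge $uu'$, which leaves only $|M|-1$ matching edges. I expect the bookkeeping here to be the hardest part. The idea is to use $uv_\ell$ together with a second disjoint edge $u'v'$, taking $v'$ as a neighbour of $u'$ if one exists; otherwise $u'$ can simply be discarded. I will try to show that this yields a matching of size $|M|+1$ or $t-|W|$ copies, handling separately whether $u'$ has other neighbours in $V_i\setminus W$.
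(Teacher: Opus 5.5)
\textbf{There is a genuine gap, and it is more than bookkeeping.}

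\emph{The count is off by one.} The set $M\setminus\{uu'\}$ has only $|M|-1\le t-2-|W|$ edges. Together with the $|W|$ copies from Lemma~\ref{3112} and the single copy through $uv_\ell$, you get at most $|M|+|W|\le t-1$ disjoint copies, never $t$. Your tally $(t-1-|W|)+|W|+1$ counts the slot of $uu'$ twice: once among the matching edges and once as the extra copy. The same error underlies the claim in Step~2 that you already have $t-1$ copies.

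\emph{The strategy cannot succeed.} No argument that only combines $tK^+(n_1,\dots,n_r)$-freeness with Lemmas~\ref{344}, \ref{3991}, \ref{3112}, \ref{312} can prove the lemma, because the statement fails for such graphs. Take the balanced complete $r$-partite graph on $V_1,\dots,V_r$. Add a star with centre $u$ and $th$ leaves inside $V_1$, and add $t-2$ further disjoint edges inside $V_2$.
\begin{itemize}
\item Every copy of $K^+(n_1,\dots,n_r)$ contains a $K_{r+1}$, hence an edge inside some part. So $t$ disjoint copies would require an inside-part matching of size $t$.
\item Here the inside-part matching number is $t-1$, so the graph is $tK^+$-free.
\item It has $W=L=\emptyset$, since $th<2\xi^3 n$, and it satisfies $\mu(H)\le t-1-|W|$.
\item Yet $\Delta(G[V_1\setminus W])=th$.
\end{itemize}
A star has matching number $1$, so the high degree of $u$ can never produce an extra disjoint copy or an augmenting path. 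Your Step~2 alternative only shows that $u'$ has no fresh neighbours, which is exactly the situation when $u'$ is a leaf of the star. The ``handle separately'' plan in your last paragraph therefore cannot close.

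\textbf{The missing ingredient is the extremality of $\rho_s(G^*)$.} The paper uses your fresh-neighbour observation for a different purpose: to show that a \emph{modified} graph stays free.
\begin{enumerate}
\item Join the high-degree vertex $v$ to every vertex of $V_1\setminus W$ to obtain $G'$.
\item Suppose $G'$ contained $t$ disjoint copies. One copy $F_1$ must use a new edge at $v$. The others, $F_2,\dots,F_t$, avoid $v$, so they lie in $G^*$ and cover a set $S$ with $|S|\le (t-1)h<th$.
\item Hence $v$ has an old neighbour $u\in V_1\setminus (W\cup S)$. Lemma~\ref{344} (with $L\subseteq W$) extends $uv$ to a copy of $K^+(n_1,\dots,n_r)$ avoiding $S$. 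This gives $t$ disjoint copies in $G^*$, a contradiction, so $G'$ is still $tK^+$-free.
\item Since $v\notin W$, $v$ has non-neighbours in $V_1\setminus W$, so $G'\neq G^*$. A new edge lies in a new $s$-clique.
\item $\mathcal{A}_s(G^*)$ is weakly irreducible (Lemma~\ref{322}), so Lemma~\ref{Lemma3} gives $\rho_s(G')>\rho_s(G^*)$. This contradicts the maximality of $G^*$.
\end{enumerate}
Without such a comparison step, the star configuration above shows the lemma is out of reach.
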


\begin{proof}
Since $L\subseteq W$,
we have $V_i\setminus W=V_i\setminus(W\cup L)$ for each $i$.
Suppose to the contrary that there exists $i\in \{1,2,\dots,r\}$ such that $\Delta(G^*[V_i\setminus (W)])\geq th$.
Without loss of generality,
assume that there exists a vertex $v\in V_1\setminus W$ such that $d_{V_1\setminus W}(v)\geq th$.
Let $G'$ be obtained from $G^*$ by adding edges between $v$ and every vertex in $V_1\setminus W$ which is not adjacent to $v$.
We claim that $G'$ is $tK^+(n_1,n_2,\dots,n_r)$-free.
Otherwise,
$G'$ contains $t$ vertex-disjoint copies $F_1,F_2,\dots,F_t$ of $K^+(n_1,n_2,\dots,n_r)$.
Since $G^*$ is $tK^+(n_1,n_2,\dots,n_r)$-free,
there exists a $K^+(n_1,n_2,\dots,n_r)$ in $G'$ containing the new edge $vz$,
where $z\in V_1\setminus W$.
Without loss of generality,
assume that $vz\in E(F_1)$.
Let $S=\bigcup_{j=2}^t V(F_j).$
Then $|S|\leq (t-1)h.$
Since $d_{V_1\setminus W}(v)\geq th>|S|,$
there exists a vertex $u\in N(v)\cap (V_1\setminus W)\setminus S.$
Thus $uv$ is an edge of $G^*[V_1\setminus (W\cup S)].$
By Lemma \ref{344}, $G^*-(W\cup S)$ contains a copy $F$ of $K^+(n_1,\dots,n_r)$.
Therefore $F,F_2,\dots,F_t$ are $t$ disjoint copies of $K^+(n_1,\dots,n_r)$ in $G^*$,
a contradiction.
Hence $G'$ is $tK^+(n_1,\dots,n_r)$-free.

Now we compare $\rho_s(G^*)$ and $\rho_s(G')$.
The added edge $vz$ can be extended to a copy of $K^+(n_1,\dots,n_r)$ in $G'$.
Hence $G'$ contains an $s$-clique containing the new edge $vz$,
and this $s$-clique is not in $G^*$.
Then,
we have
$
\rho_s(G')-\rho_s(G^*)
>0,
$
a contradiction.
Therefore $\Delta(G^*[V_i\setminus W])<th$ for each $i\in\{1,2,\dots,r\}$.
\end{proof}

\begin{lemma}\label{313}
$|W|=t-1$,
and for each $i\in\{1,2,\dots,r\}$,
$V_i\setminus W$ is an independent set.
\end{lemma}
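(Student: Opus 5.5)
We first show that each $V_i\setminus W$ is independent, and then that $|W|=t-1$.

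\textbf{Independence of $V_i\setminus W$.} Let $H=\bigcup_{i=1}^r G^*[V_i\setminus W]$. Lemma \ref{312} gives $\mu(H)\le t-1-|W|$, and Lemma \ref{355} gives $\Delta(H)<th$. By Lemma \ref{Hanson},
$$e(H)\le \mu(H)\,(\Delta(H)+1)\le (t-1)th,$$
which is bounded by a constant. Suppose $H$ contains an edge $uv$, say inside $V_1\setminus W$. Let $G'$ be obtained from $G^*$ by deleting all $O(1)$ edges of $H$ and then adding edges so that $\min\{|W|+\mu(H),\,t-1\}$ vertices are completely joined to everything.

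A cleaner route to the same contradiction compares $G^*$ with $K_{t-1}+T_r(n-t+1)$. Delete the edges of $H$ and make $W$ together with one endpoint of each matching edge universal. Since $|W|+\mu(H)\le t-1$, the resulting graph is a subgraph of $K_{a}+K(m_1,\dots,m_r)$ with $a\le t-1$. Such a graph is $tK_{r+1}$-free, hence $tK^+(n_1,\dots,n_r)$-free.

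The spectral comparison uses Lemma \ref{Lemma1} with the positive Perron vector $x$ of $G^*$ and the lower bound of Lemma \ref{310}. Removing an edge $uv$ of $H$ loses at most $O(n^{s-2})$ weight. This is because $d_{V_1}(u),d_{V_1}(v)<2\xi^3n$, and because an $s$-clique through $uv$ must use vertices from the other parts, which is bounded using the low internal degrees. Making a vertex of the matching (or of $W$) universal instead gains order $n^{s-2}$ times a positive constant. That constant comes from the new $s$-cliques containing the vertex and one vertex from its own part, using $x_v^{s-1}\ge c'$.

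\textbf{Main obstacle.} The delicate point is the precise bookkeeping of the $\Theta(n^{s-2})$ terms. I would argue vertex by vertex. Pick a non-universal endpoint $u$ of an edge in $H$. In $G^*$, the $s$-cliques through $u$ that use an edge inside $V_1$ number at most $th\cdot n^{s-2}$ with small coefficient only if restricted to $W$-free cliques. Making $u$ adjacent to all of $V_1\setminus W$ gains $|V_1|\cdot\Theta(n^{s-2})$, which is far larger. Thus $u$ would have to already be "almost universal", which contradicts $u\notin W$. The cleaner formulation is therefore the following. If $H$ has an edge $uv$, then $u$ has $d_{V_1}(u)\ge1$, and we modify $G^*$ by making $u$ adjacent to every vertex of $V(G^*)\setminus\{u\}$ while deleting all edges inside $V_i\setminus W$ other than those at $u$. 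We then check that the result is $tK^+$-free: its $H$-matching number plus $|W\cup\{u\}|$ is still at most $t-1$, and the argument of Lemmas \ref{3112} and \ref{312} applies in reverse. Its spectral radius increases by the eigenvector comparison of Eq.~(\ref{000}), since $u$ gains $\Omega(n^{s-1})$ new clique weight while only $O(n^{s-2})$ weight is deleted. This contradiction shows that $H$ is empty.

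\textbf{Showing $|W|=t-1$.} With $H$ empty, $G^*-W$ is $r$-partite, and $|W|\le t-1$ by Lemma \ref{312}. If $|W|<t-1$, add a vertex $u\in V_1\setminus W$ to $W$ by joining it to all of $V_1$. The new graph $G'$ satisfies $|W\cup\{u\}|\le t-1$, and its non-$W$ part is still $r$-partite. Every copy of $K_{r+1}$ in $G'$ must use a vertex of $W\cup\{u\}$, so $G'$ is $tK_{r+1}$-free and hence $tK^+(n_1,\dots,n_r)$-free. Moreover $\rho_s(G')>\rho_s(G^*)$ by Lemma \ref{Lemma3}, because $G^*$ is $s$-clique connected and $G'\supsetneq G^*$. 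This contradicts the maximality of $G^*$, so $|W|=t-1$.
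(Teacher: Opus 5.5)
Your proof is correct, but you prove the two claims in the opposite order from the paper. The paper proves $|W|=t-1$ first. Assuming $|W|\le t-2$, it takes \emph{any} $g=t-1-|W|$ vertices $S\subseteq V_1\setminus W$, deletes the $O(1)$ edges of $H$, and joins $S$ to $V_1\setminus(W\cup S)$. Every within-part edge of the new graph then meets $W\cup S$, a set of size $t-1$, so the graph is $tK_{r+1}$-free. The positive-eigenvector comparison with Lemma~\ref{310} ($\Omega(n^{s-1})$ gained against $O(n^{s-2})$ lost) gives the contradiction. Independence of each $V_i\setminus W$ is then read off from Lemma~\ref{312}, since $\mu(H)\le t-1-|W|=0$.

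You instead run the same switching only under the hypothesis $E(H)\neq\emptyset$, where $\mu(H)\ge 1$ forces $|W|\le t-2$ and leaves room for one more universal vertex. You then treat the case $H=\emptyset$, $|W|\le t-2$ by pure monotonicity via Lemma~\ref{Lemma3}. That second step has the small advantage of needing no eigenvector estimates. But your switching never uses that $u$ is an endpoint of an $H$-edge. It only uses $u\in V_1\setminus W$ (so $d_{V_1}(u)<2\xi^3n$ and, by Lemma~\ref{3991}, $u\notin L$) and $|W\cup\{u\}|\le t-1$. Applied to an arbitrary $u\in V_1\setminus W$ under the assumption $|W|\le t-2$, it already gives $|W|=t-1$. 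So your two steps collapse into the paper's single one.

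Two justifications in your write-up should be replaced.
\begin{enumerate}
\item The $tK^+(n_1,\dots,n_r)$-freeness of the modified graph does not follow from Lemmas~\ref{3112} and~\ref{312} ``in reverse''; those are statements about $G^*$. The right reason is the pigeonhole argument you already gave: every $K_{r+1}$ has two adjacent vertices in some $V_i$, and every such edge of $G'$ meets $W\cup\{u\}$.
\item Strictness in Lemma~\ref{Lemma3} requires $\mathcal{A}_s(G')\neq\mathcal{A}_s(G^*)$, i.e.\ a new $s$-clique, not merely a new edge. For $s\ge 3$ this holds because $u$ and a new neighbour $b\in V_1\setminus W$ lie outside $L$, so they have many common neighbours in each other part, as in Lemma~\ref{344}.
\end{enumerate}
Also, the $O(n^{s-2})$ loss per deleted edge is just the trivial bound $\binom{n-2}{s-2}$; the low-internal-degree reasoning you give for it is not needed.
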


\begin{proof}
By the proof of Lemma \ref{312},
we have $|W|\leq t-1.$
Suppose that $|W|\leq t-2$,
and let $g=t-1-|W|$.
Clearly,
$g\geq 1$.
Choose a set  $S\subseteq V_1\setminus W$ with $|S|=g$,
and let $H=\bigcup_{i=1}^rG^*[V_i\setminus W]$.
Since $L\subseteq W$,
by Lemmas \ref{312} and \ref{355},
we have $\Delta(H)<th,\mu(H)\leq g$.
Hence,
by Lemma \ref{Hanson},
we have $e(H)\leq (th+1)g=O(1)$.
Let $G'$ be obtained from $G^*$ by deleting all edges of $H$ and then adding all edges between $S$ and $V_1\setminus (W\cup S)$.
Since $S\subseteq V_1\setminus W$,
we have $W\cap S=\emptyset$,
and
$|W\cup S|=|W|+|S|=|W|+g=t-1.$
We claim that $G'$ is $tK^+(n_1,n_2,\dots,n_r)$-free. $K^+(n_1,n_2,\dots,n_r)$ contains a copy of $K_{r+1}$. Thus every copy of
$K^+(n_1,n_2,\dots,n_r)$ in $G'$ contains two adjacent vertices containing in the
same part  of $V_1,V_2,\dots,V_r$.
By the construction of $G'$,
every edges of $G'$ with both vertices in the same $V_i$ has at least one vertex in $W\cup S$.
Hence,
every copy of $K^+(n_1,n_2,\dots,n_r)$ in $G'$ contains one vertex from $W\cup S$.
Since $|W\cup S|=t-1$,
the graph $G'$ contains at most $t-1$ disjoint copies of $K^+(n_1,n_2,\dots,n_r)$,
so $G'$ is $tK^+(n_1,n_2,\dots,n_r)$-free.

Recall that $x$ is a positive eigenvector
corresponding to $\rho_s(G^*)$ with $\max_{v\in V(G^*)}x_v=1$.
Since $e(H)=O(1)$ and every edge of $H$ is contained in at most $O(n^{s-2})$ copies of $K_s$,
deleting the edge in $H$ decreases $x^{\top} \mathcal{A}_s(G^*)x^{s-1}$ by at most  $O(n^{s-2})$.
On the other hand,
for every added edge $ab$ with $a\in S,b\in V_1\setminus (W\cup S)$,
by Lemma \ref{Cioaba} and Eq. (\ref{du}),
we know that $ab$ is contained in $\Omega(n^{s-2})$ copies of $K_s$ in $G'$.
Since the number of added edges is
$\Omega(n),
$
\(G'\), the added edges contribute \(\Omega(n^{s-1})\) in total.
By Lemma \ref{310},
we have $x^{\top} \mathcal{A}_s(G')x^{s-1}>x^{\top} \mathcal{A}_s(G^*)x^{s-1}$
for all sufficiently large $n$.
Hence $\rho_s(G')>\rho_s(G^*)$, a contradiction.
Hence,
$|W|=t-1$.

By Lemma \ref{312},
we have $\mu(\bigcup_{i=1}^rG^*[V_i\setminus W])\leq t-1-|W|=0.$
Hence,
$\bigcup_{i=1}^rG^*[V_i\setminus W]$ has no edges,
so $V_i\setminus W$ is an independent set for $i\in\{1,2,\dots,r\}$.
\end{proof}

\begin{lemma}\label{314}
For every vertex $w\in W,d_{G^*}(w)=n-1$.
\end{lemma}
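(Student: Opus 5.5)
We argue by contradiction and use edge addition. Suppose some $w\in W$ has a non-neighbor $y\in V(G^*)$. Let $G'=G^*+wy$. By Lemma \ref{313}, $|W|=t-1$ and every $V_i\setminus W$ is independent. Hence every edge of $G'$ with both ends in the same part $V_i$ has an endpoint in $W$. The only possible exception is the new edge $wy$, and it also meets $W$ because $w\in W$.

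\textbf{Freeness of $G'$.} Every copy of $K^+(n_1,n_2,\dots,n_r)$ contains a $K_{r+1}$. Any $K_{r+1}$ placed into the $r$-partition $V_1\cup\dots\cup V_r$ has two adjacent vertices in the same part. So every copy of $K^+(n_1,n_2,\dots,n_r)$ in $G'$ contains a vertex of $W$. Since $|W|=t-1$, the graph $G'$ contains at most $t-1$ vertex-disjoint copies, and therefore $G'$ is $tK^+(n_1,n_2,\dots,n_r)$-free. This is exactly the counting argument already used in Lemma \ref{313}, now applied with $S=\emptyset$.

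\textbf{Spectral increase.} We need $wy$ to lie in at least one $s$-clique of $G'$. By Lemma \ref{3112} with $S=\emptyset$, or directly from $d_{V_j}(w)\ge 2\xi^3 n$ for every $j$, the vertex $w$ has linearly many neighbours in every part. We split into two cases according to the position of $y$.

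\emph{Case $y\notin W$.} Say $y\in V_i$. Since $y\notin L$ (recall $L\subseteq W$), inequality (\ref{du}) gives $d_{V_j}(y)>(\frac1r-\xi-O(\xi^2))n$ for every $j\neq i$. Choose $s-2\le r-1$ parts $V_j$ with $j\neq i$. We greedily pick vertices $z_j\in N(w)\cap N(y)\cap V_j\setminus W$ that are pairwise adjacent. The pairwise adjacency comes from the same common-neighbourhood estimates used in the proof of Lemma \ref{3112}: the sets $A_j$ have size at least $\xi^3n/2$, and at most $2\epsilon n^2$ cross edges are missing. Intersecting with $N(y)$ removes at most $|A_j\setminus N(y)|\le |V_j\setminus N(y)|\le 3\xi n$ vertices. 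This bound is too weak against $\xi^3 n$, so this step needs care, and it is where I expect the main obstacle.

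To get around it, I would first try to show that $N_{V_j}(w)$ is actually almost all of $V_j$, using $x_w$. By Lemma \ref{310}, $x_w$ is bounded below. An eigen-equation comparison like the one in Lemma \ref{3991} should also show that $x_w$ is at least about $x_{v_0}$: otherwise rewiring $w$ to be complete to $\bigcup_i I_i$ would keep the graph free, by the same $W$-counting, and would increase $\rho_s$. Then the eigen-equation at $w$ forces $w$ to have $(1-o(1))n$ neighbours. With that in hand, $|N_{V_j}(w)\cap N_{V_j}(y)|\ge (\frac1r-O(\xi))n$, and Lemma \ref{Cioaba} yields $\Omega(n^{s-2})$ $s$-cliques through $wy$.

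\emph{Case $y\in W$.} Pick $s-2$ vertices in distinct parts among the common neighbours of $w$ and $y$. Both $w$ and $y$ have almost full degree by the preceding step, so these common neighbourhoods are large, and Lemma \ref{Cioaba} again applies.

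\textbf{Conclusion.} Once $wy$ lies in at least one $s$-clique of $G'$, we have $\mathcal A_s(G')\ge \mathcal A_s(G^*)$ with $\mathcal A_s(G')\neq \mathcal A_s(G^*)$. Moreover $G^*$ is $s$-clique connected by Lemma \ref{322}, so $\mathcal A_s(G')$ is weakly irreducible by Lemma \ref{dangqiejindang}. Lemma \ref{Lemma3} then gives $\rho_s(G')>\rho_s(G^*)$, contradicting the maximality of $G^*$. Hence $d_{G^*}(w)=n-1$ for every $w\in W$.
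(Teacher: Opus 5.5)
\textbf{There is a genuine gap in the spectral step.} Your freeness argument is fine. It is the $W$-counting argument from Lemma \ref{313}, and it is a valid and simpler alternative to the paper's route, which re-embeds the copy through $w$ into $G^*$ via Lemma \ref{3112}.

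The problem comes from adding only the single edge $wy$. This forces you to show that $N(w)\cap N(y)$ contains an $(s-2)$-clique, and you never establish it. As you observe, $d_{V_j}(w)\ge 2\xi^3 n$ does not beat the up to order-$\xi n$ non-neighbours of $y$ in $V_j$, so $N(w)\cap N(y)\cap V_j$ could a priori be empty.

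Your workaround is only sketched (``should also show''), and its key claim is not justified. A lower bound on $x_w$ (Lemma \ref{310}), even $x_w=1$, fed into the eigen-equation at $w$ gives only
\[
\binom{d_{G^*}(w)}{s-1}\ge |C_{s-1}(G^*[N(w)])|\ge\rho_s(G^*)x_w^{s-1}=\Omega(n^{s-1}).
\]
That is, $d_{G^*}(w)=\Omega(n)$, not $(1-o(1))n$. Indeed, being complete to $r-1$ of the parts already supplies about $\binom{r-1}{s-1}(n/r)^{s-1}$ such $(s-1)$-cliques, which is the leading term of $\rho_s(G^*)$. Your case $y\in W$ rests on the same unproven near-full-degree claim.

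\textbf{The fix is to add all missing edges at $w$ at once,} which is what the paper does. Your counting argument never used that only $wy$ was added. Let $G'$ join $w$ to every non-neighbour. Each new edge is still incident with $w\in W$, so every edge of $G'$ inside some $V_i$ meets $W$. Hence every copy of $K^+(n_1,\dots,n_r)$ meets $W$, and $G'$ is $tK^+(n_1,\dots,n_r)$-free because $|W|=t-1$.

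Now no degree estimate is needed. By Lemma \ref{322}, $y$ lies in an $s$-clique of $G^*$, hence in an $(s-1)$-clique $c_{s-1}$. This clique avoids $w$ because $wy\notin E(G^*)$. So $c_{s-1}\cup\{w\}$ is an $s$-clique of $G'$ but not of $G^*$. Lemma \ref{Lemma3}, with weak irreducibility as in your conclusion, then gives $\rho_s(G')>\rho_s(G^*)$.
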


\begin{proof}
Suppose to the contrary that there exist $w\in W$ and $v\in V(G^*)$ such that $wv\notin E(G^*)$.
Let $G'$ be obtained from $G^*$ by joining $w$ to every vertex in $V(G^*)\setminus\bigl(N(w)\cup\{w\}\bigr)$.
We first claim that $G'$ is $tK^+(n_1,n_2,\dots,n_r)$-free.
Otherwise,
if $G'$ contains $t$ pairwise vertex-disjoint copies of $K^+(n_1,n_2,\dots,n_r)$,
denoted by $F_1,\dots,F_t$.
Then there exists a $K^+(n_1,n_2,\dots,n_r)$,
say $F_1$,
contains the new edges.
Let $S=\bigcup _{i=2}^t V(F_i)$.
Then $|S|\leq (t-1)h$ and $w\notin S$.
By Lemma \ref{3112},
$G^*-\big((W\cup S)\setminus \{w\}\big)$ contains a copy of $K^+(n_1,n_2,\dots,n_r)$,
denoted by $F'$,
and $S\cap V(F')=\emptyset$.
Thus,
$F',F_2,\dots,F_t$ is a copy of $tK^+(n_1,n_2,\dots,n_r)$ in $G^*$,
a contradiction.
Hence,
$G'$ is $tK^+(n_1,n_2,\dots,n_r)$-free.

Let $y\in V(G^*)\setminus\bigl(N(w)\cup\{w\}\bigr)$.
Since $G^*$ is $s$-clique connected,
the vertex $y$ is contained in some $s$-cliques of $G^*$.
Let $c_{s-1}$ be an $(s-1)$-clique of $G^*$ containing $y$.
Then $c_{s-1}\cup\{w\}$ is a new $s$-clique of $G'$.
Therefore
%
%
by Lemma \ref{Lemma3},
we have $\rho_s(G')>\rho_s(G^*)$,
a contradiction.
\end{proof}

%

\begin{lemma}\label{315}
$G^*=K_{t-1}+K'$,
where $K'$ is a complete $r$-partite graph on $n-t+1$ vertices.
\end{lemma}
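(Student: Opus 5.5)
By Lemmas \ref{313} and \ref{314}, $W$ consists of exactly $t-1$ vertices, each adjacent to every other vertex of $G^*$. Hence $G^*[W]\cong K_{t-1}$ and $W$ is completely joined to $V(G^*)\setminus W$. This gives $G^*=K_{t-1}+G^*[V(G^*)\setminus W]$. The remaining task is to show that $K'=G^*[V(G^*)\setminus W]$ is a complete $r$-partite graph with parts $V_1\setminus W,\dots,V_r\setminus W$. Lemma \ref{313} already says that each $V_i\setminus W$ is independent, so $K'$ is a subgraph of the complete $r$-partite graph $K(|V_1\setminus W|,\dots,|V_r\setminus W|)$. We must show that no cross edge is missing.

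I will argue by contradiction. Suppose $u\in V_i\setminus W$ and $v\in V_j\setminus W$ with $i\neq j$ and $uv\notin E(G^*)$. Let $G'=G^*+uv$. First, $G'$ is $tK^+(n_1,\dots,n_r)$-free. Every edge of $G'$ with both endpoints in the same part of the partition $W,V_1\setminus W,\dots,V_r\setminus W$ meets $W$, since those parts are independent in $G^*$ and $uv$ crosses between two of them. Every copy of $K^+(n_1,\dots,n_r)$ contains a $K_{r+1}$. Deleting $W$ from $G'$ leaves a subgraph of an $r$-partite graph, which is $K_{r+1}$-free. Hence every copy of $K^+(n_1,\dots,n_r)$ in $G'$ meets $W$. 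Since $|W|=t-1$, $G'$ contains at most $t-1$ disjoint copies. This is the same argument used in the proof of Lemma \ref{313}.

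Second, $\rho_s(G')>\rho_s(G^*)$. By Lemma \ref{322}, $\mathcal{A}_s(G^*)$ is weakly irreducible, so $\mathcal{A}_s(G')$ is as well: adding an edge only adds $s$-cliques, and the arcs already present keep the associated digraph strongly connected. By Lemma \ref{Lemma3}, it suffices that $\mathcal{A}_s(G')\neq\mathcal{A}_s(G^*)$, that is, that $uv$ lies in some new $s$-clique. For $s=2$ this is immediate. For $s\ge3$, choose one vertex in each of $s-2$ parts $V_\ell\setminus W$ with $\ell\neq i,j$, taking each vertex in the common neighbourhood of $u$, $v$ and the previously chosen vertices. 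Lemma \ref{Cioaba} together with the degree bound \eqref{du} gives linearly many common neighbours at each step, since all these vertices lie outside $W\supseteq L$. This uses $s\le r$. Alternatively, if $t\ge2$, one can use $w\in W$ together with vertices from other parts. In either case $\rho_s(G')>\rho_s(G^*)$, contradicting the maximality of $G^*$.

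The main obstacle is making sure the new $s$-clique exists for every $s\le r$, which requires $s-2$ other parts even when $s=r$. Since $t\ge2$, $W\neq\emptyset$, and a vertex of $W$ adjacent to everything can serve as one clique vertex, which relaxes the count by one. This completes the proof of Lemma \ref{315}. Combined with Lemma \ref{Xu}, it then yields $K'=T_r(n-t+1)$.
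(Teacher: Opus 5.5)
Your proof is correct and follows essentially the same route as the paper. Both argue by contradiction: add missing cross edges between the parts $V_i\setminus W$, observe that every copy of $K^+(n_1,\dots,n_r)$ must then meet the $(t-1)$-set $W$, and get a strict increase of $\rho_s$ from weak irreducibility together with Lemma~\ref{Lemma3}. The only differences are that you add the single edge $uv$ and explicitly build a new $s$-clique through it (using $s\le r$ or a vertex of $W$), whereas the paper adds all missing cross edges at once and simply asserts that the new graph is still free and gains new $s$-cliques.
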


\begin{proof}
By Lemmas \ref{313} and  \ref{314},
we have $G^*=K_{t-1}+K'$,
where $V(K_{t-1})=W$ and $K'$ is an $r$-partite graph with parts $V_1\setminus W,\dots,V_r\setminus W$.
Suppose that $K'$ is not complete $r$-partite graph.
Then there exist vertices $u \in V_i \setminus W$ and $v\in V_j \setminus W$ with distinct $i , j$ such that $uv \notin E(G^*)$.
Let $G'$ be the graph obtained from $G^*$ by adding all edges $xy$ with $x\in V_i\setminus W$, $y\in V_j\setminus W$, $1\leq i<j\leq r$, 
and $xy\notin E(G^*)$.
Obviously,
$G'$ is $tK^+(n_1,n_2,\dots,n_r)$-free and forms some new $s$-cliques in $G'$.
Hence,
we have $\rho_s(G')>\rho_s(G^*)$,
a contradiction.
Hence,
$K'$ is a complete $r$-partite graph.
\end{proof}

We are now ready to give the proof of Theorem \ref{theorem1}.

\begin{proof}[The proof of Theorem \ref{theorem1}]
By Lemmas \ref{Xu} and \ref{315},
we obtain that if $G^*$ is a $tK^+(n_1,n_2,\\\dots,n_r)$-free graph with the maximum $s$-clique spectral radius, then $G^* = K_{t-1} + T_r(n - t + 1)$.
This completes the proof of Theorem \ref{theorem1}.
%
\end{proof}

At last we give the proof of Theorem \ref{theorem2}.

\begin{proof}[The proof of Theorem \ref{theorem2}]
Note that $F_i$ is a color-critical graph with $\chi(F_i)=r+1,1\leq i\leq t$.
Then,
for $1\leq i\leq t$,
there exists a graph $K^+(n^{(i)}_{1},n^{(i)}_{2},\dots,n^{(i)}_{r})$ containing a copy of $F_i$ as a subgraph.
Let $n_j=\max_{1\leq i\leq t}n_{j}^{(i)}$ for $1\leq j\leq r$.
Then we have $\bigcup _{i=1}^tF_i$ is a subgraph of $tK^+(n_1,n_2,\dots,n_r)$.
Hence,
${\rm spex}_s(n,\bigcup _{i=1}^tF_i)\leq {\rm spex}_s\big(n,tK^+(n_1,n_2,\dots,n_r)\big)$.

By Theorem \ref{theorem1},
we have ${\rm spex}_s\big(n,tK^+(n_1,n_2,\dots,n_r)\big)=\rho_s(K_{t-1}+T_r(n-t+1))$ and $K_{t-1}+T_r(n-t+1)$ is the unique graph with the maximum $s$-clique spectral radius among all $n$ vertices $tK^+(n_1,n_2,\dots,n_r)$-free graphs.
Since $K_{t-1}+T_r(n-t+1)$ is a $\bigcup_{i=1}^tF_i$-free graph with $n$ vertices,
we have ${\rm spex}_s(n,\bigcup _{i=1}^tF_i)\geq \rho_s\big(K_{t-1}+T_r(n-t+1)\big)$.
Hence,
$K_{t-1}+T_r(n-t+1)$ is the unique graph with the maximum $s$-clique spectral radius among all $n$ vertices $\bigcup_{i=1}^tF_i$-free graphs.
\end{proof}


\end{spacing}
\end{document}